
\documentclass[12pt]{amsart}
\usepackage{amssymb}
\usepackage{amsfonts,graphicx}
\usepackage{hyperref}  

\usepackage{mathtools}
\mathtoolsset{showonlyrefs,showmanualtags} 
 
\usepackage{pgf,tikz}
\usetikzlibrary{shapes.geometric}
\usetikzlibrary{trees}
\usepgflibrary{arrows}

\theoremstyle{plain}
\newtheorem{theorem}[equation]{Theorem}

\newtheorem{definition}[equation]{Definition}

\newtheorem{lemma}[equation]{Lemma}
\newtheorem{notation}[equation]{Notation}

\newtheorem{proposition}[equation]{Proposition}
\newtheorem{remark}[equation]{Remark}

\numberwithin{equation}{section}

\setlength{\textwidth}{16.6cm}
\setlength{\topmargin}{0cm}
\setlength{\oddsidemargin}{0cm}
\setlength{\evensidemargin}{0cm}

\allowdisplaybreaks
\swapnumbers


\hypersetup{
    colorlinks=true,           linkcolor=blue,              citecolor=magenta,            filecolor=magenta,          urlcolor=cyan           }

\begin{document}
\title[Two weight norm inequality]{A Two Weight Inequality for the Hilbert transform
 Assuming  an Energy Hypothesis}
\author[M.T. Lacey]{Michael T. Lacey}
\address{School of Mathematics \\
Georgia Institute of Technology \\
Atlanta GA 30332 }
\email{lacey@math.gatech.edu}
\thanks{Research supported in part by the NSF grant 0456611}
\author[E.T. Sawyer]{Eric T. Sawyer}
\address{ Department of Mathematics \& Statistics, McMaster University, 1280
Main Street West, Hamilton, Ontario, Canada L8S 4K1 }
\email{sawyer@mcmaster.ca}
\thanks{Research supported in part by NSERC}
\author[I. Uriarte-Tuero]{Ignacio Uriarte-Tuero}
\address{ Department of Mathematics \\
Michigan State University \\
East Lansing MI }
\email{ignacio@math.msu.edu}
\thanks{Research supported in part by the NSF, through grant DMS-0901524.}
\date{}

\begin{abstract}
Let $\sigma $ and $\omega $ be locally finite positive Borel measures on $%
\mathbb{R}$. Subject to the pair of weights satisfying a side condition, we
characterize boundedness of the Hilbert transform $H$ from $L^{2}\left(
\sigma \right) $ to $L^{2}\left( \omega \right) $ in terms of the $A_{2}$
condition%
\begin{equation*}
\left[ \int_{I}\left( \frac{\left\vert I\right\vert }{\left\vert
I\right\vert +\left\vert x-x_{I}\right\vert }\right) ^{2}d\omega \left(
x\right) \int_{I}\left( \frac{\left\vert I\right\vert }{\left\vert
I\right\vert +\left\vert x-x_{I}\right\vert }\right) ^{2}d\sigma \left(
x\right) \right] ^{\frac{1}{2}}\leq C\left\vert I\right\vert ,
\end{equation*}%
and the two testing conditions: For all intervals $I$ in $\mathbb{R}$
\begin{eqnarray*}
\int_{I}H\left( \mathbf{1} _{I}\sigma \right) (x)^{2}d\omega (x) &\leq
&C\int_{I}d\sigma (x), \\
\int_{I}H\left( \mathbf{1} _{I}\omega \right) (x)^{2}d\sigma (x) &\leq
&C\int_{I}d\omega (x),
\end{eqnarray*}%
 The proof uses the beautiful corona
argument of Nazarov, Treil and Volberg. There is a range of side conditions,
termed Energy conditions; at one endpoint, the Energy conditions are also a
consequence of the testing conditions above, and at the other endpoint they are 
the Pivotal Conditions of Nazarov, Treil and Volberg.  We detail an example 
which shows that  the Pivotal Conditions are not 
necessary for boundedness of the Hilbert
transform.
\end{abstract}

\maketitle
\setcounter{tocdepth}{1}
\tableofcontents

\def\baselinestretch{1.25}
\section{Introduction}

We provide sufficient conditions for the two weight inequality for the
Hilbert transform. Indeed, subject to a side condition, a characterization
of the two weight $L ^2 $ inequality is given.

For a signed measure $\omega $ on $\mathbb{R}$ define 
\begin{equation}
H\omega \left( x\right) \equiv \textup{p.v.}\int \frac{1}{x-y}\;\omega
(dy)\,.  \label{e.H}
\end{equation}%
A \emph{weight} $\omega $ is a non-negative locally finite measure. For two
weights $\omega ,\sigma $, we are interested in the inequality 
\begin{equation}
\left\Vert H(\sigma f)\right\Vert _{L^{2}(\omega )}\lesssim \left\Vert
f\right\Vert _{L^{2}(\sigma )}.  \label{e.H<}
\end{equation}
See Definition \ref{def inequality} below for a precise definition of $p.v.$
and the meaning of \eqref{e.H<}. The two weight problem for the Hilbert
transform is to provide a real variable characterization of the pair of
weights $\omega ,\sigma $ for which inequality \eqref{e.H<} holds.

What should such a characterization look like? Motivated by the very
successful $A_{p}$ theory for the Hilbert transform in Hunt, Muckenhoupt and
Wheeden \cite{HuMuWh}, one suspects that the weights should satisfy the two
weight analog of the $A_{2}$ condition:%
\begin{equation*}
\sup_{I} \frac{1}{\left\vert I\right\vert }\int_{I}\omega (dx) \cdot \frac{1%
}{\left\vert I\right\vert }\int_{I}\sigma (dx) <\infty \,.
\end{equation*}

As it turns out this two weight $A_{2}$ condition is not sufficient. The
suggestion for additional necessary conditions comes from the $T1$ theorem
of David and Journ\'{e} `{MR763911} and the two weight theorems of the
second author for fractional integral operators, \cite{MR930072}. These
conditions require the following, holding uniformly over intervals $I$:%
\begin{align}
\int_{I}\lvert H(\mathbf{1}_{I}\sigma )\rvert ^{2}\;\omega (dx) & \leq 
\mathcal{H}^{2}\sigma (I)\,,  \label{e.H1} \\
\int_{I}\lvert H(\mathbf{1}_{I}\omega )\rvert ^{2}\;\sigma (dx) & \leq (%
\mathcal{H}^{\ast })^{2}\omega (I)\,.  \label{e.H2}
\end{align}%
Here, we are letting $\mathcal{H}$ and $\mathcal{H}^{\ast }$ denote the
smallest constants for which these inequalities are true uniformly over all
intervals $I$, and we write $\sigma (I)\equiv \int_{I}\sigma (dx)$.

Clearly, \eqref{e.H1} is derived from applying the inequality \eqref{e.H<}
to indicators of intervals. One advantage of formulating the inequality %
\eqref{e.H<} with the measure $\sigma $ on both sides of the inequality is
that duality is then easy to derive: Interchange the roles of $\omega $ and $%
\sigma $. Thus, the condition \eqref{e.H2} is also derived from \eqref{e.H<}%
. We call these `testing conditions' as they are derived from simple
instances of the claimed inequality.  Also, we emphasize that duality in this sense 
is basic to the subject, and we will appeal to it repeatedly. 

In a beautiful series of papers, Nazarov, Treil and Volberg have developed a
sophisticated approach toward proving the sufficiency of these testing
conditions combined with an improvement of the two weight $A_{2}$ condition.
To describe this improvement, we define this variant of the Poisson integral
for use throughout this paper. For an interval $I$ and measure $\omega $,%
\begin{gather}
\mathsf{P}(I,\omega )\equiv \int_{\mathbb{R}}\frac{\lvert I\rvert }{(\lvert
I\rvert +\textup{dist}(x,I))^{2}}\;\omega (dx)  \label{e.P} \\
\sup_{I}\mathsf{P}(I,\omega )\cdot \mathsf{P}(I,\sigma )=\mathcal{A}%
_{2}^{2}<\infty \,.  \label{A2}
\end{gather}%
The last line is the improved condition of Nazarov, Treil and Volberg. We
will refer to \eqref{A2} as simply the $A_{2}$ condition. F. Nazarov has
shown that even this strengthened $A_{2}$ condition is not sufficient for
the two weight inequality \eqref{e.H<} - see e.g. Theorem 2.1 in \cite{NiTr}.

The approach of Nazarov, Treil and Volberg involves a delicate combination
of ideas: random grids (see \cite{MR1998349}), weighted Haar functions and
Carleson embeddings (see \cite{MR2407233}), stopping intervals (see \cite%
{Vol}) and culminates in the use of these techniques with a corona
decomposition in the brilliant 2004 preprint \cite{NTV3}. Theorem 2.2 of that 
paper proves 
the sufficiency of conditions \eqref{A2}, \eqref{e.H1} and \eqref{e.H2} for
the two weight inequality \eqref{e.H<} in the presence of two additional
side conditions, the \emph{Pivotal Conditions} given by 
\begin{equation}
\sum_{r=1}^{\infty }\omega (I_{r})\mathsf{P}(I_{r},\mathbf{1} _{I_{0}}\sigma
)^{2}\leq \mathcal{P}^{2}\sigma (I_{0}),  \label{pivotalcondition}
\end{equation}%
(and its dual) where the inequality is required to hold for all intervals $%
I_{0}$, and decompositions $\{I_{r}\;:\;r\geq 1\}$ of $I_{0}$ into disjoint
intervals $I_{r}\subsetneq I_{0}$. As a result they obtain the
equivalence of \eqref{e.H<} with the three conditions \eqref{A2}, %
\eqref{e.H1} and \eqref{e.H2} when both weights are doubling,\ and also when
two maximal inequalities hold.   Our Theorem below contains this result as a special case.  

In our approach, we replace the Pivotal Condition \eqref{pivotalcondition} by certain
weaker side conditions of \emph{energy type}. We begin with a necessary form
of energy.

\begin{definition}
\label{d.energy} For a weight $\omega $, and interval $I$, we set 
\begin{equation*}
\mathsf{E}\left( I,\omega \right) \equiv \left[ \mathbb{E}_{I}^{\omega (dx)}%
\left[ \mathbb{E}_{I}^{\omega (dx^{\prime })}\frac{x-x^{\prime }}{\lvert
I\rvert }\right] ^{2}\right] ^{1/2}.
\end{equation*}
\end{definition}

It is important to note that $\mathsf{E}(I,\omega )\leq 1$, and can be quite
small, if $\omega $ is highly concentrated inside the interval $I$; in
particular if $\omega \mathbf{1}_{I}$ is a point mass, then $\mathsf{E}%
(I,\omega )=0$. Note also that $\omega (I)\left\vert I\right\vert ^{2}%
\mathsf{E}(I,\omega )^{2}$ is the variance of the variable $x$, and that we
have the identity%
\begin{eqnarray*}
\mathsf{E}\left( I,\omega \right) ^{2} &=&
\frac{1}{2}\mathbb{E}_{I}^{\omega (dx)}\mathbb{E}_{I}^{\omega (dx^{\prime
})} \frac{\left( x-x^{\prime }\right) ^{2}}{\left\vert I\right\vert
^{2}}\,. 
\end{eqnarray*}

The following \emph{Energy Condition} is necessary for the two weight
inequality:%
\begin{equation}
\sum_{r\geq 1}\omega (I_{r})\mathsf{E}(I_{r},\omega )^{2}\mathsf{P}%
(I_{r},\sigma \mathbf{1}_{I_{0}})^{2}\leq \mathcal{E}^{2}\sigma (I_{0}),
\label{energy condition}
\end{equation}%
where the sum is taken over all decompositions $I_{0}=\bigcup_{r=1}^{\infty
}I_{r}$ of the interval $I_{0}$ into pairwise disjoint intervals $\left\{
I_{r}\right\} _{r\geq 1}$. As $\mathsf{E}(I,\omega )\leq 1$, the Energy
Condition is weaker than the Pivotal Condition.

As a preliminary sufficient side condition, we consider the \emph{geometric
mean} of the pivotal and energy conditions: for $0\leq \epsilon \leq 2$ we
say that the weight pair $\left( \omega ,\sigma \right) $ satisfies the 
\emph{Hybrid Energy Condition} or simply \emph{Hybrid Condition} provided%
\begin{equation}
\sum_{r\geq 1}\omega (I_{r})\mathsf{E}(I_{r},\omega )^{\epsilon }\mathsf{P}%
(I_{r},\sigma \mathbf{1}_{I_{0}})^{2}\leq \mathcal{E}_{\epsilon }^{2}\sigma
(I_{0}),  \label{Hy Con}
\end{equation}%
where the sum is taken over all decompositions $I_{0}=\bigcup_{r=1}^{\infty
}I_{r}$. When $\epsilon =2$ this is the necessary Energy Condition and when $%
\epsilon =0$ this is the Pivotal Condition. A corollary of our main theorem
is that if the weight pair $\left( \omega ,\sigma \right) $ satisfies the
Hybrid Condition \eqref{Hy Con} and its dual for some $\epsilon <2$, then
the two weight inequality \eqref{e.H<} is equivalent to the $A_{2}$
condition \eqref{A2} and the testing conditions \eqref{e.H1} and (\ref{e.H2}%
).

Later in this paper we exhibit a weight pair $\left( \omega ,\sigma \right) $
satisfying \eqref{e.H1}, \eqref{e.H2}, \eqref{A2} and the Hybrid Conditions
for some $\epsilon <2$, but for which the dual Pivotal Condition \emph{fails}%
. In particular this shows that the Pivotal Conditions are \emph{not}
necessary for the two weight inequality \eqref{e.H<}.

\subsection{An optimal condition}

Now we describe an optimal---for the method of proof---sufficient side
condition. First it is convenient to introduce two functionals of pairs of
sets that arise.

\begin{definition}
\label{funct}Fix $0\leq \epsilon <2$. We define the functionals%
\begin{eqnarray}
\Phi \left( J,E\right) &\equiv &\omega \left( J\right) \mathsf{E}\left(
J,\omega \right) ^{2}\mathsf{P}\left( J,\mathbf{1}_{E}\sigma \right) ^{2},
\label{def functionals} \\
\Psi \left( J,E\right) &\equiv &\omega \left( J\right) \mathsf{E}\left(
J,\omega \right) ^{\epsilon }\mathsf{P}\left( J,\mathbf{1}_{E}\sigma \right)
^{2}.  \notag
\end{eqnarray}
\end{definition}

Note that $\Phi \left( I_{r},I_{0}\right) $ appears in the sum on the left
side of the Energy Condition \eqref{energy condition}, and $\Phi \left( J,%
\widehat{I}\setminus I^{\prime }\right) $ appears again on the right side of
the dual Energy Estimate \eqref{e.Edual} below. The larger functional $\Psi
\left( I_{r},I_{0}\right) $ appears in the sum on the left side of the
Hybrid Condition \eqref{Hy Con}.

It turns out that one can replace $\Psi $ in the proof below with any
functional, subject to three properties holding. We now describe the three
properties required of the functional $\Psi $.

Set $e(I)\equiv \left\{ a,b,\frac{a+b}{2}\right\} $ to be the set consisting
of the endpoints and midpoint of an interval $I=\left[ a,b\right] $. We say
that a subpartition $\left\{ J_{r}\right\} $ of $I$ is $\varepsilon $\emph{%
-good} if%
\begin{equation}
\operatorname{dist}(J_{r},e(I))>\tfrac{1}{2}\lvert J_{r}\rvert ^{\varepsilon }\lvert
I\rvert ^{1-\varepsilon }.  \label{good property}
\end{equation}%
For $\gamma >0$ and $\varepsilon >0$, and for all pairs of intervals $%
I_{0}\subset \widehat{I}$ in $\mathcal{D}^{\sigma }$ we require 
\begin{equation} \label{Psi properties}
\phantom{.}\ 
\begin{cases}
\Psi \left( I_{0},I_{0}\right) \leq \mathcal{F}_{\gamma ,\varepsilon
}^{2}\sigma (I_{0}),   \\ \\
\sum_{r\geq 1}\Psi \left( I_{r},I_{0}\right) \leq \mathcal{F}_{\gamma
,\varepsilon }^{2}\sigma (I_{0}), \qquad   \text{for all subpartitions }%
\left\{ I_{r}\right\} \text{ of }I_{0}\ , \\ \\
\sum_{r\geq 1}\Phi \left( J_{r},\widehat{I}\setminus I_{0}\right) \leq
\sup_{r\geq 1}\left( \frac{\left\vert J_{r}\right\vert }{\left\vert
I_{0}\right\vert }\right) ^{\gamma }\Psi \left( I_{0},\widehat{I}\right)  \\ 
\phantom{\sum_{r\geq 1}\Psi \left( I_{r},I_{0}\right) \leq \mathcal{F}_{\gamma
,\varepsilon }^{2}\sigma (I_{0}),} \qquad 
\text{for all }\varepsilon \text{-good subpartitions }%
\left\{ J_{r}\right\} \text{ of }I_{0}\ .%
\end{cases}
\end{equation}

\begin{description}
\item[Note] It is important to note that the second line requires us to test
over \emph{all} subpartitions $\left\{ I_{r}\right\} $ of $I_{0}$. In the
third line we need only test over the $\varepsilon $-good subpartitions, but
must include \emph{differences} $\widehat{I}\setminus I_{0}$ of intervals in
the argument of $ \Phi$ on the left side.
\end{description}

When $\Psi $ is given by \eqref{def functionals}, the first line in \eqref{Psi properties} 
is the usual $A_{2}$ condition, the second line is the
Hybrid Condition \eqref{Hy Con} with $\epsilon =\gamma $, and the third line
is proved in Lemma \ref{hybrid implies hypothesis} below.

For fixed $\gamma ,\varepsilon >0$, there is a \emph{smallest} functional $%
\Psi _{\gamma ,\varepsilon }$ satisfying the third line in \eqref{Psi
properties}, namely%
\begin{equation}
\Psi _{\gamma ,\varepsilon }\left( I,E\right) \equiv \sup_{I\supset
\bigcup_{s\geq 1}J_{s}}\left[ \inf_{s\geq 1}\left( \frac{\left\vert
I\right\vert }{\left\vert J_{s}\right\vert }\right) ^{\gamma }\right]
\sum_{s\geq 1}\Phi \left( J_{s},E\right) ,  \label{def Psi gamma}
\end{equation}%
where the supremum is taken over all $\varepsilon $-good subpartitions $%
\left\{ J_{s}\right\} $ of the interval $I$. Note that $\Psi _{\gamma
,\varepsilon }\left( I,E\right) $ becomes smaller as either $\gamma $ or $%
\varepsilon $ becomes smaller, and also as $E$ becomes smaller. The
functional $\Psi _{\gamma ,\varepsilon }$ also satisfies the first line as
we see by taking $E=I_{0}$ and the trivial decomposition $I_{1}=I_{0}$. Then
the second line in \eqref{Psi properties} becomes%
\begin{equation}
\sum_{r\geq 1}\Psi _{\gamma ,\varepsilon }\left( I_{r},I_{0}\right) \leq 
\mathcal{F}_{\gamma ,\varepsilon }^{2}\sigma (I_{0}),\ \ \ \ \ \text{for 
\emph{all} subpartitions }\left\{ I_{r}\right\} \text{ of }I_{0}\ .
\label{smallest condition}
\end{equation}%
This condition \eqref{smallest condition}, which we call the \emph{Energy
Hypothesis}, thus represents the optimal side condition that can be used,
along with its dual version, with the methods of this paper (all three lines
in \eqref{Psi properties} hold and the third line is optimal). From Lemma %
\ref{hybrid implies hypothesis} and the optimal property of $\Psi _{\gamma
,\varepsilon }$, we see that the Hybrid Condition \eqref{Hy Con} implies the
Energy Hypothesis \eqref{smallest condition} with $\gamma =2-\epsilon
-2\varepsilon >0$.

\begin{theorem}
\label{main}Suppose that $\omega $ and $\sigma $ are locally finite positive
Borel measures on the real line having no point masses in common, namely $%
\omega \left( \left\{ x\right\} \right) \sigma \left( \left\{ x\right\}
\right) =0$ for all $x\in \mathbb{R}$. Suppose in addition that for some $%
\gamma  >0$, and $ 0 < \varepsilon<1$ we have both Energy Hypothesis constants $\mathcal{F}_{\gamma
,\varepsilon }$ and $\mathcal{F}_{\gamma ,\varepsilon }^{\ast }$ finite.
Then the two weight inequality \eqref{e.H<} holds \emph{if and only if}

\begin{itemize}
\item the pair of weights satisfies the $A_{2}$ condition \eqref{A2};

\item the testing conditions \eqref{e.H1} and \eqref{e.H2} both hold.
\end{itemize}
\end{theorem}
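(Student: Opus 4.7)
The plan is to follow the Nazarov--Treil--Volberg corona argument of \cite{NTV3}, substituting the Energy Hypothesis \eqref{smallest condition} for the Pivotal Condition \eqref{pivotalcondition} at every point where the latter enters the NTV proof. Necessity of the testing conditions \eqref{e.H1}, \eqref{e.H2} is immediate by testing \eqref{e.H<} on indicators of intervals (and its dual); necessity of \eqref{A2} follows by combining the testing conditions with the pointwise lower bound $\lvert H(\mathbf{1}_I\sigma)(x)\rvert \gtrsim \mathsf{P}(I,\sigma)$ valid on a companion interval of $I$. The bulk of the work is sufficiency.

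The first step is to reduce, via the random dyadic grid machinery of \cite{MR1998349} and the good/bad decomposition, to bounding the bilinear form $\langle H(\sigma f),g\rangle_\omega$ when the Haar expansions of $f$ and $g$ are supported on $\varepsilon$-good intervals of $\mathcal{D}^\sigma$ and $\mathcal{D}^\omega$ respectively, with goodness tuned to \eqref{good property}. Expanding in weighted Haar bases, the form becomes a double sum over pairs $(I,J)$ of inner products $\langle H(\sigma h^\sigma_I),h^\omega_J\rangle_\omega$ weighted by Haar coefficients. I split this sum into: (a) a \emph{disjoint/far} piece with $I\cap J=\emptyset$, handled directly by \eqref{A2} using Poisson tail decay of $H(\sigma h^\sigma_I)$; (b) a \emph{comparable/overlapping} piece, handled by $A_2$ together with the variance identity for $\mathsf{E}(I,\omega)$; and (c) the \emph{nested} piece $J\Subset I$ together with its dual $I\Subset J$, which carries all the real difficulty.

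For the nested piece I run a two-sided corona: on the $\sigma$ side, build a stopping tree $\mathcal{F}$ by declaring $F'\subset F$ a child whenever either the $\sigma$-average of $\lvert f\rvert$ on $F'$ doubles relative to $F$, or the energy functional $\omega(\cdot)\mathsf{E}(\cdot,\omega)^2\mathsf{P}(\cdot,\sigma\mathbf{1}_F)^2$ on $F'$ is much larger than on $F$. Within a corona $\mathcal{C}_F=\{I:\pi_{\mathcal{F}}I=F\}$, the form decomposes into a paraproduct, commutator, and stopping pieces; the paraproduct is absorbed by the testing constant $\mathcal{H}$ applied to $\mathbf{1}_F\sigma$, and the remaining pieces are reduced, via the NTV monotonicity principle, to sums of the form
\begin{equation*}
\sum_{J} \Phi\bigl(J,\widehat{I}\setminus F'\bigr),
\end{equation*}
with $J$ ranging over an $\varepsilon$-good subpartition. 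The first line of \eqref{Psi properties} handles the coarse in-corona estimate, the third line dominates the monotonicity sum by $\Psi(F',\widehat{I})$, and the second line (a Carleson-embedding statement along the tree $\mathcal{F}$) closes the global estimate against $\sigma(F_0)$. The dual hypothesis $\mathcal{F}^\ast_{\gamma,\varepsilon}<\infty$ plays the symmetric role on the $\omega$ side.

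The main obstacle---and the reason the three properties \eqref{Psi properties} are shaped as they are---is the monotonicity step for nested pairs where $J$ lies deep below the stopping interval $F'$: one needs the Poisson functional inside $\Phi(J,\cdot)$ to act on the \emph{difference} $\widehat{I}\setminus F'$ rather than on all of $\widehat{I}$, otherwise no corona recursion can converge. This is exactly what $\varepsilon$-goodness of $J$ allows, by giving a quantitative separation of $J$ from the endpoints and midpoint of $F'$, so that the portion of $\sigma$ near $\partial F'$ contributes only acceptable error. Once this monotonicity lemma is in place, Lemma \ref{hybrid implies hypothesis} together with the minimality of $\Psi_{\gamma,\varepsilon}$ in \eqref{def Psi gamma} shows that the Energy Hypothesis assumed in the theorem is sufficient to substitute for the Pivotal Condition used by NTV, and the proof closes.
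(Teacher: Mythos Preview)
Your outline follows the NTV corona strategy that the paper also uses, but there is a genuine gap in the stopping rule you specify, and it is exactly the gap the paper singles out. You propose to stop when the energy functional $\Phi(F',F)=\omega(F')\,\mathsf{E}(F',\omega)^{2}\,\mathsf{P}(F',\sigma\mathbf{1}_{F})^{2}$ is large. The paper's Remark~\ref{r.stopping} explicitly warns that this ``simpler'' rule \emph{does not permit one to fully exploit the Energy Hypothesis}; the correct rule (Definition~\ref{d.stopping}) stops on the larger functional $\Psi_{\gamma,\varepsilon}(S,I_{0})\ge 4\mathcal{F}_{\gamma,\varepsilon}^{2}\sigma(S)$. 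The distinction is not cosmetic. In the stopping-term estimate $A_{3}^{4}$ (\S\ref{s.43}) and in the off-diagonal Carleson estimate of Theorem~\ref{t.miraculous}, one applies the third line of \eqref{Psi properties} to get
\[
\sum_{J}\Phi(J,S\setminus I_{\theta})\le 2^{-\gamma s}\,\Psi_{\gamma,\varepsilon}(I_{\theta},S),
\]
and then needs $\Psi_{\gamma,\varepsilon}(I_{\theta},S)\lesssim \mathcal{F}_{\gamma,\varepsilon}^{2}\,\sigma(I_{\theta})$ for the \emph{non}-stopping child $I_{\theta}$. That bound is precisely the failure of the $\Psi$-based stopping condition \eqref{e.stoppingDef}. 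If you stop on $\Phi$, the failure of stopping only gives $\Phi(I_{\theta},S)\lesssim\sigma(I_{\theta})$, which does not control $\Psi_{\gamma,\varepsilon}(I_{\theta},S)$, and the geometric decay $2^{-\gamma s}$ that closes the sum over scales is lost. Your sketch already invokes the third line of \eqref{Psi properties} and then the second line ``along the tree $\mathcal{F}$'', but the second line sums $\Psi$ over a subpartition; it does not bound $\Psi$ on a single non-stopping interval, which is what the stopping-term argument demands.

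Two smaller discrepancies. First, the paper's corona has \emph{no} Calder\'on--Zygmund stopping on averages of $\lvert f\rvert$; control on the $f$ side comes from orthogonality of the corona projections \eqref{e.2} together with the Carleson embedding Theorem~\ref{carleson embedding} applied to the energy-based Carleson measures of \S\ref{s.carlesonMeasures}. Adding a CZ layer is not obviously wrong, but it is a different organization and you would need to check that the mixed stopping tree still yields \eqref{e.fStopping}. Second, your necessity argument for the two-tailed $A_{2}$ condition \eqref{A2} via ``testing plus a companion-interval lower bound'' would only produce the half-$A_{2}$ condition of Remark~\ref{r.a2}; the paper derives the full \eqref{A2} directly from the norm inequality by a real-variable argument (Proposition~\ref{p.A2necceary}) that splits at a well-chosen point $a$ and handles common point masses carefully.
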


\begin{remark}
The reader can easily check that Theorem \ref{main} holds if the infimum $%
\inf_{s\geq 1}\left( \frac{\left\vert I\right\vert }{\left\vert
J_{s}\right\vert }\right) ^{\gamma }$ in \eqref{def Psi gamma} is replaced
by $\inf_{s\geq 1}\eta \left( \frac{\left\vert I_{r}\right\vert }{\left\vert
J_{r,s^{\prime }}\right\vert }\right) $ for a suitable Dini function $\eta $
on $\left[ 1,\infty \right) $.
\end{remark}

The quantitative estimate we give for the norm of the Hilbert transform is
given in \eqref{e.prove1}. Consider now the conjecture of Volberg \cite{Vol}
that the two weight inequality holds if and only if the $A_{2}$ and testing
conditions hold. Since the Energy Condition \eqref{energy condition} is
actually a consequence of the $A_{2}$ and testing condition \eqref{e.H1},
Volberg's conjecture would be proved if we could take $\gamma =0$ in Theorem %
\ref{main}. (That we can take $ \varepsilon >0$ follows from the general techniques 
of \S\ref{s.good}.)
 There are subtle obstacles to overcome in order to achieve such
a characterization.

\bigskip

We will follow the beautiful approach of Nazarov, Treil and Volberg using
random grids, stopping intervals and corona decompositions. Energy enters
into the argument at those parts based upon the smoothness of the kernel,
see the Energy Lemma, especially \eqref{e.Eimplies} below. Much of the
argument we use appears in Chapters 17-22 of the CBMS book by Volberg \cite%
{Vol}, with the final touches in the preprint of Nazarov, Treil and Volberg 
\cite{NTV3}. In order to make this complicated proof self-contained, we
reproduce these critical ideas in our sufficiency proof below.

In \S \ref{s.example}, we exhibit a pair of weights which satisfy the
two-weight inequality, as they fall within the scope of our Main Theorem,
yet they do \emph{not} satisfy the Pivotal Condition of
Nazarov-Treil-Volberg.

The main novelty of this paper is that (1) the energy condition
is \emph{necessary} for the two weight testing conditions, (2) the Energy
Hypothesis can be inserted into the approach of \cite{NTV3}, and (3) that
the Pivotal Conditions are not necessary for the two-weight inequality.

\medskip

The integral defining $H(\sigma f)$ in \eqref{e.H<} is not in general
absolutely convergent, and we must introduce appropriate truncations. The
following canonical construction from \cite{Vol} serves our purposes here.

\begin{definition}
\label{def inequality}Let $\zeta $ be a fixed smooth nondecreasing function
on the real line satisfying%
\begin{equation*}
\zeta (t)=0\text{ for }t\leq \frac{1}{2}\text{ and }\zeta (t)=1\text{ for }%
t\geq 1.
\end{equation*}%
Given $\varepsilon >0$, set $\zeta _{\varepsilon }(t)=\zeta \left( \frac{t}{%
\varepsilon }\right) $ and define the smoothly truncated operator $%
T_{\varepsilon }$ by the absolutely convergent integral 
\begin{equation*}
T_{\varepsilon }f(x)=\int \frac{1}{y-x}\zeta _{\varepsilon }(\left\vert
x-y\right\vert )f(y)d\sigma \left( y\right) ,\qquad f\in L^{2}\left( \mathbb{%
\sigma }\right) \text{ with compact support}.
\end{equation*}%
We say \eqref{e.H<} holds if the inequality there holds for all compactly
supported $f$ with $T_{\varepsilon }$ in place of $T$, uniformly in $%
\varepsilon >0$.
\end{definition}

One easily verifies that all of the necessary conditions derived below can
be achieved using this definition provided $\omega $ and $\sigma $ have no
point masses in common (note that if $\omega =\sigma =\delta _{x}$, then (%
\ref{e.H<}) holds trivially with this definition while \eqref{A2} fails).
Moreover, the kernels $\frac{1}{y-x}\zeta _{\varepsilon }(\left\vert
x-y\right\vert )$ of $T_{\varepsilon }$ are uniformly standard Calder\'{o}%
n-Zygmund kernels, and thus all of the sufficiency arguments below hold as
well using this definition. In the sequel we will suppress the use of $%
T_{\varepsilon }$ and simply write $T$.


\section{Necessary Conditions}

In this section, we collect some conditions which follow either from the
assumed norm inequality or the testing conditions. These are the $A_{2}$
condition, a weak-boundedness condition, and the Energy Condition. The
principal novelty is the Energy Condition.

\subsection{The Necessity of the $A_2$ Condition}

\label{s.A2}

In this section we will give a new proof of this known fact due to
F.~Nazarov:


\begin{proposition}
\label{p.A2necceary} Assuming the norm inequality \eqref{e.H<}, we have the $%
A_2$ condition \eqref{A2}. Qualitatively, 
\begin{equation}
\mathcal{N}\equiv \left\Vert H(\cdot \sigma )\right\Vert _{L^{2}(\sigma
)\rightarrow L^{2}(\omega )} \gtrsim \mathcal{A}_{2}  \label{e.lower}
\end{equation}
\end{proposition}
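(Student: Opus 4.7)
The plan is to exhibit, for each interval $I$, explicit test pairs $(f,g)$ that produce a lower bound on the bilinear form $B(f,g) := \int H(f\sigma)\,g\,d\omega$, which is controlled above by $\mathcal{N}\|f\|_{L^2(\sigma)}\|g\|_{L^2(\omega)}$. Matching this upper bound against a lower bound scaled by $\mathsf{P}(I,\omega)\mathsf{P}(I,\sigma)$ will give the claim.

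Fix $I$ with midpoint $c_I$, and decompose $\omega=\omega_L+\omega_R$, $\sigma=\sigma_L+\sigma_R$ relative to $c_I$. Pigeonhole among the four products $\mathsf{P}(I,\omega_*)\mathsf{P}(I,\sigma_*)$ selects a dominant term. Using the antisymmetry of $H$ (which gives $\mathcal{N}=\|H(\cdot\omega)\|_{L^2(\omega)\to L^2(\sigma)}$) together with reflection through $c_I$, the two ``opposite-side'' products reduce to a single case, say $\omega=\omega_L$, $\sigma=\sigma_R$, so that $1/(x-y)$ has constant sign across the supports. For this configuration, set $u=y-c_I\geq 0$, $v=c_I-x\geq 0$, and take
$$f(y) = \frac{|I|}{|I|+u}\,\mathbf{1}_{y>c_I}, \qquad g(x) = \frac{|I|}{|I|+v}\,\mathbf{1}_{x<c_I}.$$
A direct computation gives $\|f\|_{L^2(\sigma)}^2 \lesssim |I|\,\mathsf{P}(I,\sigma_R)$ and $\|g\|_{L^2(\omega)}^2 \lesssim |I|\,\mathsf{P}(I,\omega_L)$. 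The constant sign of the kernel yields
$$|B(f,g)| = \iint \frac{|I|^2\,d\sigma(y)\,d\omega(x)}{(|I|+u)(|I|+v)(u+v)},$$
and the trivial estimate $(|I|+u)(|I|+v)=|I|^2+|I|(u+v)+uv\geq |I|(u+v)$ bounds the integrand from below by $\frac{|I|^3}{(|I|+u)^2(|I|+v)^2}$, so $|B(f,g)|\gtrsim |I|\,\mathsf{P}(I,\sigma_R)\mathsf{P}(I,\omega_L)$. Comparing with the upper bound $|B(f,g)|\leq \mathcal{N}\cdot|I|\sqrt{\mathsf{P}(I,\sigma_R)\mathsf{P}(I,\omega_L)}$ yields the desired $\mathsf{P}(I,\sigma_R)\mathsf{P}(I,\omega_L)\lesssim \mathcal{N}^2$; by the symmetries one also obtains $\mathsf{P}(I,\sigma_L)\mathsf{P}(I,\omega_R)\lesssim \mathcal{N}^2$.

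The main obstacle is the remaining ``same-side'' products $\mathsf{P}(I,\omega_L)\mathsf{P}(I,\sigma_L)$ and $\mathsf{P}(I,\omega_R)\mathsf{P}(I,\sigma_R)$, where the sign-based lower bound breaks down because on a common half-line the kernel $1/(x-y)$ has no definite sign. To handle these, one decomposes the common half-line further into dyadic shells $E_j=\{z: 2^{j-1}|I|<|z-c_I|\leq 2^j|I|\}$; off-diagonal pairs $E_j\times E_k$ with $j\neq k$ are separated at scale $\sim (2^j+2^k)|I|$ and admit a recentered version of the opposite-side argument, while diagonal contributions $E_j\times E_j$ reduce, after rescaling, to an $A_2$ estimate on a smaller interval inside $E_j$. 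The delicate step is to control the summation over dyadic scales so that the resulting bound is captured by a single Poisson product without unwanted logarithmic losses.
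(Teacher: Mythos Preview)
Your opposite-side argument is essentially the paper's: the same test functions $s_I(x)=\frac{|I|}{|I|+|x-c_I|}$, the same pointwise inequality $\frac{1}{x-y}\ge |I|^{-1}s_I(x)s_I(y)$ for $y<x$, and the same conclusion
\[
\int_{a}^{\infty}s_I^2\,d\omega\;\cdot\;\int_{-\infty}^{a}s_I^2\,d\sigma\;\lesssim\;\mathcal N^2\,|I|^2,
\]
together with its three variants (swap half-lines, swap $\omega\leftrightarrow\sigma$). The difference is that you fix the split at $a=c_I$ and then face a genuine ``same-side'' obstruction, which you leave unresolved beyond a sketch.

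This is where the paper's argument diverges and where your proposal has a gap. The paper does \emph{not} attempt to bound $\mathsf P(I,\omega_R)\mathsf P(I,\sigma_R)$ directly. Instead it chooses the split point $a$ adaptively: pick $a$ so that
\[
\int_{-\infty}^{a}s_I^2\,d\sigma=\int_{a}^{\infty}s_I^2\,d\sigma=\tfrac12\int_{\mathbb R}s_I^2\,d\sigma
\]
(with a short extra argument if a point mass of $\sigma$ prevents exact bisection). With this choice, the two ``opposite-side'' inequalities already contain the full $\sigma$-integral on one factor, and summing them gives
\[
\int_{\mathbb R}s_I^2\,d\omega\cdot\int_{\mathbb R}s_I^2\,d\sigma\;\lesssim\;\mathcal N^2|I|^2,
\]
which is exactly the $A_2$ condition. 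No same-side estimate is ever needed.

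Your dyadic-shell plan for the same-side case is incomplete as stated: the off-diagonal terms, after recentering, control Poisson products for \emph{dilated} intervals $2^{j\vee k}I$, not for $I$ itself, and reassembling these into a bound for $\mathsf P(I,\omega_R)\mathsf P(I,\sigma_R)$ without a logarithmic loss is exactly the difficulty you flag but do not resolve. The diagonal terms likewise only give local-$A_2$ information on $E_j$, not a Poisson tail for $I$. The adaptive choice of $a$ is the missing idea that eliminates this whole branch of the argument.
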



The analogue of this inequality in the unit disk was proved for the
conjugate operator in \cite{NTV3} and \cite{Vol} Chapter 16. We provide a
real-variable proof here.


\begin{proof}
Fix an interval $I$ and for $a\in \mathbb{R}$ and $r>0$ let 
\begin{eqnarray*}
s_{I}\left( x\right) &=&\frac{\left\vert I\right\vert }{\left\vert
I\right\vert +\left\vert x-x_{I}\right\vert }, \\
f_{a,r}\left( y\right) &=&\mathbf{1}_{\left( a-r,a\right) }\left( y\right)
s_{I}\left( y\right) ,
\end{eqnarray*}%
where $x_{I}$ is the center of the interval $I$. For $y<x$ we have%
\begin{eqnarray*}
\left\vert I\right\vert \left( x-y\right) &=&\left\vert I\right\vert \left(
x-x_{I}\right) +\left\vert I\right\vert \left( x_{I}-y\right) \\
&\lesssim &\left( \left\vert I\right\vert +\left\vert x-x_{I}\right\vert
\right) \left( \left\vert I\right\vert +\left\vert x_{I}-y\right\vert
\right) ,
\end{eqnarray*}%
and so%
\begin{equation*}
\frac{1}{x-y}\geq \left\vert I\right\vert ^{-1}s_{I}\left( x\right)
s_{I}\left( y\right) ,\ \ \ \ \ y<x.
\end{equation*}%
Thus for $x>a$ we obtain that%
\begin{eqnarray*}
H\left( f_{a,r}\sigma \right) \left( x\right) &=&\int_{a-r}^{a}\frac{1}{x-y}%
s_{I}\left( y\right) ^{{}}d\sigma \left( y\right) \\
&\geq &\left\vert I\right\vert ^{-1}s_{I}\left( x\right)
\int_{a-r}^{a}s_{I}\left( y\right) ^{2}d\sigma \left( y\right) .
\end{eqnarray*}%
Applying our assumed two weight inequality \eqref{e.H<} in the sense of
Definition \ref{def inequality}, and then letting $\varepsilon >0$ there go
to $0$, we see that 
\begin{align*}
& \left\vert I\right\vert ^{-2}\int_{a}^{\infty }s_{I}\left( x\right)
^{2}\left( \int_{a-r}^{a}s_{I}\left( y\right) ^{2}d\sigma \left( y\right)
\right) ^{2}d\omega \left( x\right) \\
& \leq \left\Vert H(\sigma f_{a,r})\right\Vert _{L^{2}(\omega )}^{2}\lesssim 
\mathcal{N}^{2}\left\Vert f_{a,r}\right\Vert _{L^{2}(\sigma )}^{2}=\mathcal{N%
}^{2}\int_{a-r}^{a}s_{I}\left( y\right) ^{2}d\sigma \left( y\right) .
\end{align*}

Rearranging the last inequality, we obtain%
\begin{equation*}
\left\vert I\right\vert ^{-2}\int_{a}^{\infty }s_{I}\left( x\right)
^{2}d\omega \left( x\right) \int_{a-r}^{a}s_{I}\left( y\right) ^{2}d\sigma
\left( y\right) \lesssim \mathcal{N}^{2},
\end{equation*}%
and upon letting $r\rightarrow \infty $, and taking a square root, 
\begin{equation} \label{e.s=}
\left( \int_{a}^{\infty }s_{I}\left( x\right) ^{2}d\omega \left( x\right)
\int_{-\infty }^{a}s_{I}\left( y\right) ^{2}d\sigma \left( y\right) \right)
^{\frac{1}{2}}\lesssim \mathcal{N}\left\vert I\right\vert .
\end{equation}%
The ranges of integration are complementary half-lines, and clearly we can
reverse the role of the two weights above.

Choose $a\in \mathbb{R}$ which evenly divides the $L^{2}(\sigma )$-norm of $%
s_{I}$ in this sense:  
\begin{equation} \label{e.p=}
\int_{-\infty }^{a}s_{I}\left( y\right) ^{2}d\sigma \left( y\right)
=\int_{a}^{\infty }s_{I}\left( y\right) ^{2}d\sigma \left( y\right) =\frac{1%
}{2}\int_{-\infty }^{\infty }s_{I}\left( y\right) ^{2}d\sigma \left(
y\right) ,
\end{equation}%
and conclude that%
\begin{align}
\int_{-\infty }^{\infty }s_{I}\left( x\right) ^{2}d\omega \left( x\right)
\int_{-\infty }^{\infty }s_{I}\left( y\right) ^{2}d\sigma \left( y\right) &
=\int_{-\infty }^{a}s_{I}\left( x\right) ^{2}d\omega \left( x\right)
\int_{-\infty }^{\infty }s_{I}\left( y\right) ^{2}d\sigma \left( y\right) \\
& \qquad +\int_{a}^{\infty }s_{I}\left( x\right) ^{2}d\omega \left( x\right)
\int_{-\infty }^{\infty }s_{I}\left( y\right) ^{2}d\sigma \left( y\right) \\ 
\label{e.rep}
& \leq 2\int_{-\infty }^{a}s_{I}\left( x\right) ^{2}d\omega \left( x\right)
\int_{a}^{\infty }s_{I}\left( y\right) ^{2}d\sigma \left( y\right) \\
& \qquad +2\int_{a}^{\infty }s_{I}\left( x\right) ^{2}d\omega \left(
x\right) \int_{-\infty }^{a}s_{I}\left( y\right) ^{2}d\sigma \left( y\right)
\\
& \lesssim \mathcal{N}^{2}\left\vert I\right\vert ^{2}.
\end{align}%
Dividing through by $\lvert I\rvert ^{2}$, and forming the supremum over $I$
concludes the proof in the case where we can choose $ a$ as in \eqref{e.p=}.

We now consider the case where a point masses in $ \sigma $ prevents \eqref{e.p=} from holding.  
If we replace $a$ by $a+\varepsilon $ in \eqref{e.s=},  and then
let $\varepsilon \rightarrow 0$ this gives%
\begin{equation*}
\int_{\left( a,\infty \right) }s_{I}\left( x\right) ^{2}d\omega \left(
x\right) \int_{\left( -\infty ,a\right] }s_{I}\left( y\right) ^{2}d\sigma
\left( y\right) \lesssim \mathcal{N}^{2}\left\vert I\right\vert ^{2}.
\end{equation*}%
The ranges of integration are complementary half-lines, and clearly we can
reverse the role of the open and closed half-lines, as well as the role of
the two weights, resulting in four such inequalities altogether.

Now choose $a\in \mathbb{R}$ to be the largest number satisfying 
\begin{equation}
\int_{\left( -\infty ,a\right) }s_{I}\left( y\right) ^{2}d\sigma \left(
y\right) \leq \frac{1}{2}\int_{-\infty }^{\infty }s_{I}\left( y\right)
^{2}d\sigma \left( y\right) .  \label{max a}
\end{equation}%
Of course it may happen that \emph{strict} inequality occurs in (\ref{max a}%
) due to a point mass in $\sigma $ at the point $a$. In the event that this
point mass is missing or relatively small, i.e. 
\begin{equation*}
\sigma \left( \left\{ a\right\} \right) s_{I}\left( a\right) ^{2}\leq \frac{1%
}{2}A,
\end{equation*}%
where $A=\int_{-\infty }^{\infty }s_{I}\left( y\right) ^{2}d\sigma \left(
y\right) $, we can conclude that at least one of the integrals $\int_{\left(
-\infty ,a\right) }s_{I}\left( y\right) ^{2}d\sigma \left( y\right) $ or $%
\int_{\left( a,\infty \right) }s_{I}\left( y\right) ^{2}d\sigma \left(
y\right) $ is at least $\frac{1}{4}A$. Suppose that the first integral $%
\int_{\left( -\infty ,a\right) }s_{I}\left( y\right) ^{2}d\sigma \left(
y\right) $ is at least $\frac{1}{4}A$, and moreover is the smaller of the
two if both are at least $\frac{1}{4}A$. Then we also have $\int_{\left[
a,\infty \right) }s_{I}\left( y\right) ^{2}d\sigma \left( y\right) \geq 
\frac{1}{4}A$, where we have included the point mass at $a$ in the integral
on the left. We can now repeat the argument of \eqref{e.rep} to conclude this case.  

It remains to consider the case that the point mass at $a$ is a relatively
large proportion of the Poisson integral, i.e.%
\begin{equation*}
\sigma \left( \left\{ a\right\} \right) s_{I}\left( a\right) ^{2}>\frac{1}{2}%
\int_{-\infty }^{\infty }s_{I}\left( y\right) ^{2}d\sigma \left( y\right) .
\end{equation*}%
But then, consider the two universal inequalities 
\begin{eqnarray*}
\int_{\left( a,\infty \right) }s_{I}\left( x\right) ^{2}d\omega \left(
x\right) \int_{\left( -\infty ,a\right] }s_{I}\left( y\right) ^{2}d\sigma
\left( y\right) &\lesssim &\mathcal{N}^{2}\left\vert I\right\vert ^{2}, \\
\int_{\left( -\infty ,a\right) }s_{I}\left( x\right) ^{2}d\omega \left(
x\right) \int_{\left[ a,\infty \right) }s_{I}\left( y\right) ^{2}d\sigma
\left( y\right) &\lesssim &\mathcal{N}^{2}\left\vert I\right\vert ^{2}\,. 
\end{eqnarray*}%
Both integrals against $ \sigma $ include the point mass at $a $, hence 
they exceed 
 $\frac{1}{2}%
\int_{-\infty }^{\infty }s_{I}\left( y\right) ^{2}d\sigma \left( y\right) $.
It is our hypothesis that  $ \omega $ and $ \sigma $ do not have common 
point masses, so we conclude the $ A_2$ condition in this case.

\end{proof}

\begin{remark}
\label{r.fat} Preliminary results in this direction were obtained by
Muckenhoupt and Wheeden, and in the setting of fractional integrals by
Gabidzashvili and Kokilashvili, and here we follow the argument proving
(1.9) in Sawyer and Wheeden \cite{MR1175693}, where `two-tailed'
inequalities, like those in the $A_{2}$ condition \eqref{A2}, originated in
the fractional integral setting. A somewhat different approach to this for
the conjugate operator in the disk uses conformal invariance and appears in 
\cite{NTV3}, and provides the first instance of a strengthened $A_{2}$
condition being proved necessary for a two weight inequality for a singular
integral.
\end{remark}

\begin{remark}\label{r.a2}
 In the proof of the sufficient direction of the Main Theorem \ref{main}, we
only need `half' of the $A_{2}$ condition. Namely, we only need 
\begin{equation*}
\sup_{I}\frac{\omega (I)}{\lvert I\rvert }\mathsf{P}(I,\sigma )<\infty \,,
\end{equation*}%
along with the  dual condition. This point could be of use in
seeking to verify that a particular pair of weights satisfies the testing
conditions.
\end{remark}

\subsection{The Weak Boundedness Condition}

We show that a condition analogous to the weak-boundedness criteria of the
David and Journ\'e $T1$ theorem is a consequence of the $A_2$ condition and
the two testing conditions \eqref{e.H1} and \eqref{e.H2}.

For a constant $C>1$, let $\mathcal{W}_C$ be the best constant in the
inequality 
\begin{equation}
\left\vert \int_{J}H(\mathbf{1}_{I}\sigma )\;\omega (dx)\right\vert \leq 
\mathcal{W}_C\sigma (I)^{1/2}\omega (J)^{1/2}\,,  \label{e.W}
\end{equation}%
where the inequality is uniform over all intervals $I,J$ with $\textup{dist}%
(I,J)\leq \lvert I\rvert +\lvert J\rvert $ and $C^{-1}\leq \lvert I\rvert
/\lvert J\rvert \lesssim C $.  
The exact value of $ C$ that we will need in the sufficient direction of our 
Theorem depends upon the choice of 
$ \varepsilon >0$ in the Energy Hypothesis.  It is therefore a constant, and we will 
simply write $ \mathcal W$ below.

\begin{proposition}
For $C>1$, we have the inequality \label{wbc}%
\begin{equation*}
\mathcal{W}\leq \min \left\{ \mathcal{H},\mathcal{H}^{\ast }\right\}
+C^{\prime }\mathcal{A}_{2}.
\end{equation*}
\end{proposition}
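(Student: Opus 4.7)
The plan is as follows. By the antisymmetry of the Hilbert kernel,
\[
\int_J H(\mathbf{1}_I \sigma)\,d\omega \;=\; -\int_I H(\mathbf{1}_J \omega)\,d\sigma,
\]
so the bound involving $\mathcal{H}^{\ast}$ follows from the one involving $\mathcal{H}$ by interchanging the roles of $(\sigma,I)$ and $(\omega,J)$; this accounts for the minimum in the statement. Hence it suffices to prove $\mathcal{W}\leq \mathcal{H}+C'\mathcal{A}_{2}$.

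Let $K$ be the smallest interval containing $I\cup J$. By the hypotheses $C^{-1}\leq |I|/|J|\leq C$ and $\textup{dist}(I,J)\leq |I|+|J|$, we have $|K|\sim |I|\sim |J|$. I decompose
\[
H(\mathbf{1}_I \sigma) \;=\; H(\mathbf{1}_K \sigma) \;-\; H(\mathbf{1}_{K\setminus I}\sigma),
\]
and integrate over $J\subset K$. For the first term, Cauchy--Schwarz and the testing condition \eqref{e.H1} applied to $K$ yield
\[
\Bigl|\int_J H(\mathbf{1}_K \sigma)\,d\omega\Bigr|
\;\leq\; \omega(J)^{1/2}\,\|H(\mathbf{1}_K\sigma)\|_{L^{2}(\omega,K)}
\;\leq\; \mathcal{H}\,\omega(J)^{1/2}\sigma(K)^{1/2}.
\]
Splitting $\sigma(K)^{1/2}\leq \sigma(I)^{1/2}+\sigma(K\setminus I)^{1/2}$ produces the desired leading term $\mathcal{H}\,\omega(J)^{1/2}\sigma(I)^{1/2}$, plus a residual of the form $\mathcal{H}\,\omega(J)^{1/2}\sigma(K\setminus I)^{1/2}$ that must be absorbed into the $\mathcal{A}_{2}$-term.

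The residual above and the second piece $\int_J H(\mathbf{1}_{K\setminus I}\sigma)\,d\omega$ are both estimated using the Poisson form \eqref{A2} of the $A_{2}$ condition. The set $K\setminus I$ is a disjoint union of at most two intervals lying in $K$, adjacent to (or separated by a gap from) $J$. The off-diagonal integral $\int_{J}\!\int_{K\setminus I}|x-y|^{-1}\,d\sigma(y)\,d\omega(x)$ is controlled by a Whitney-style dyadic decomposition into subintervals $I'\subset K$ of dyadically shrinking length at the shared boundaries. At each scale, the product of the local $\omega$- and $\sigma$-masses on $I'$ is controlled by $\mathsf{P}(I',\omega)\mathsf{P}(I',\sigma)\leq \mathcal{A}_{2}^{2}$; summing the resulting geometric series yields $C'\mathcal{A}_{2}\,\sigma(I)^{1/2}\omega(J)^{1/2}$, as required.

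The main obstacle is precisely the off-diagonal analysis at the boundary where $I$ meets the adjacent components of $K\setminus I$: the Hilbert kernel $(x-y)^{-1}$ may be arbitrarily large there, so the pointwise bound $|H(\mathbf{1}_{K\setminus I}\sigma)(x)|\leq \int|x-y|^{-1}d\sigma(y)$ cannot be integrated naively. The essential feature of the Poisson form of \eqref{A2}, as opposed to the ordinary Muckenhoupt average $A_{2}$ condition, is that it incorporates the tails of the measures and so produces geometrically convergent contributions across the dyadic scales clustering at these shared endpoints; this is what makes the absorption into $C'\mathcal{A}_{2}$ possible.
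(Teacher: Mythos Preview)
Your decomposition through the enveloping interval $K$ has a genuine gap at the ``residual'' step. After applying the testing condition on $K$ you obtain $\mathcal{H}\,\omega(J)^{1/2}\sigma(K)^{1/2}$, and then split off the residual $\mathcal{H}\,\omega(J)^{1/2}\sigma(K\setminus I)^{1/2}$. But this term carries the constant $\mathcal{H}$, not $\mathcal{A}_2$, and involves $\sigma(K\setminus I)^{1/2}$ rather than $\sigma(I)^{1/2}$. Since $\sigma$ is an arbitrary locally finite measure, there is no bound whatsoever of $\sigma(K\setminus I)$ in terms of $\sigma(I)$; all the $\sigma$-mass of $K$ could sit in $K\setminus I$. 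Hence this residual cannot be dominated by $C'\mathcal{A}_2\,\sigma(I)^{1/2}\omega(J)^{1/2}$, and the argument breaks.

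A related problem arises in your treatment of $\int_J H(\mathbf{1}_{K\setminus I}\sigma)\,d\omega$: the $\sigma$-integration is over $K\setminus I$, so any Whitney/Carleson estimate will naturally produce factors of $\sigma(K\setminus I)$, not $\sigma(I)$, on the right-hand side. Moreover, $J$ may lie entirely inside $K\setminus I$ (when $I\cap J=\emptyset$), so this is not an off-diagonal term at all and the absolute-value integral $\int_J\int_{K\setminus I}|x-y|^{-1}\,d\sigma\,d\omega$ can diverge.

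The paper avoids both issues by splitting $J$ rather than enlarging $I$: write $J=J_L\cup J_C\cup J_R$ with $J_C=J\cap I$ and $J_L,J_R$ the pieces of $J$ to the left and right of $I$. On $J_C\subset I$ one applies the testing condition \emph{to $I$ itself}, yielding the correct factor $\mathcal{H}\,\sigma(I)^{1/2}$ directly, with no residual. The genuinely off-diagonal pieces $\int_{J_L}H(\mathbf{1}_I\sigma)\,d\omega$ and $\int_{J_R}H(\mathbf{1}_I\sigma)\,d\omega$ are then controlled by $C'\mathcal{A}_2\,\sigma(I)^{1/2}\omega(J)^{1/2}$ via Muckenhoupt's two-weight Hardy inequality, whose best constant is verified to be $\lesssim\mathcal{A}_2$ using the Poisson form of the $A_2$ condition. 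The Hardy inequality is what handles the boundary singularity where $J_L$ touches $I$; your Whitney heuristic is in the right spirit, but the paper's use of Hardy makes the estimate precise and keeps the correct factors $\sigma(I)^{1/2}\omega(J)^{1/2}$ throughout.
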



\begin{proof}
To see this we write%
\begin{equation*}
\int_{J}H(\mathbf{1}_{I}\sigma )\;d\omega =\int_{J_{L}}H(\mathbf{1}%
_{I}\sigma )\;d\omega +\int_{J_{C}}H(\mathbf{1}_{I}\sigma )\;d\omega
+\int_{J_{R}}H(\mathbf{1}_{I}\sigma )\;d\omega ,
\end{equation*}%
where%
\begin{eqnarray*}
J_{L} &=&\left\{ x\in J\setminus I:x\text{ lies to the \emph{left} of }%
I\right\} , \\
J_{C} &=&J\cap I, \\
J_{R} &=&\left\{ x\in J\setminus I:x\text{ lies to the \emph{right} of }%
I\right\} .
\end{eqnarray*}%
Now we easily have%
\begin{eqnarray*}
\left\vert \int_{J_{C}}H(\mathbf{1}_{I}\sigma )\;d\omega \right\vert
&\lesssim &\sqrt{\omega (J_{C})}\left( \int_{J_{C}}\left\vert H(\mathbf{1}%
_{I}\sigma )\right\vert ^{2}\;d\omega \right) ^{\frac{1}{2}} \\
&\lesssim &\sqrt{\omega (J)}\left( \int_{I}\left\vert H(\mathbf{1}_{I}\sigma
)\right\vert ^{2}\;d\omega \right) ^{\frac{1}{2}} \\
&\lesssim &\sqrt{\omega (J)}\mathcal{H}\sqrt{\sigma (I)}.
\end{eqnarray*}%
The two remaining terms are each handled in the same way, so we treat only
the first one $\int_{J_{L}}H(\mathbf{1}_{I})\sigma \;d\omega $. We will use
Muckenhoupt's characterization of Hardy's inequality \cite{Muc} for weights $%
\widehat{\omega }$ and $\sigma $: if $B$ is the best constant in%
\begin{equation}
\int_{0}^{a}\left( \int_{0}^{x}f\sigma \right) ^{2}d\widehat{\omega }\left(
x\right) \leq B^{2}\int_{0}^{a}\left\vert f\right\vert ^{2}d\sigma ,\ \ \ \
\ f\geq 0,  \label{Muck}
\end{equation}%
then, 
\begin{equation}
B^{2}\approx \sup_{0<r<a}\left( \int_{r}^{a}d\widehat{\omega }\right) \left(
\int_{0}^{r}d\sigma \right) .  \label{B is}
\end{equation}%
We will give the proof here assuming that $\omega $ and $\sigma $ have no
point masses,  as the general case is hard.  

Without loss of generality we consider the\ extreme case $J_{L}=\left(
-a,0\right) $ and $I=\left( 0,b\right) $ with $0<a<b$. We decompose $%
I=I_{1}\cup I_{2}$ with $I_{1}=\left( 0,a\right) $ and $I_{2}=\left(
a,b\right) $. First we note the easy estimate%
\begin{eqnarray*}
\left\vert \int_{J_{L}}H(\mathbf{1}_{I_{2}}\sigma )\;d\omega \right\vert
&\lesssim &\int_{-a}^{0}\left( \int_{a}^{b}\frac{1}{y}d\sigma \left(
y\right) \right) d\omega \left( x\right) =\omega( J_{L})\int_{a}^{b}\frac{1}{%
y}d\sigma \left( y\right) \\
&\lesssim &\omega (J_{L})\sqrt{\sigma( I_{2})}\left( \int_{a}^{b}\frac{1}{%
y^{2}}d\sigma \left( y\right) \right) ^{\frac{1}{2}} \\
&\lesssim &\sqrt{\omega( J_{L})\sigma( I_{2})}\sqrt{\frac{ \omega (J_{L})}{a}%
\mathsf{P}\left( I_{1},\sigma \right) }\leq 2\mathcal{A}_{2}\sqrt{\omega(
J_{L})\sigma( I_{2})},
\end{eqnarray*}%
since $J_{L}$ and $I_{1}$ are touching intervals of equal length $a$. Then
we use \eqref{B is} for the other term:%
\begin{eqnarray*}
\left\vert \int_{J_{L}}H(\mathbf{1}_{I_{1}}\sigma )\;d\omega \right\vert
&=&\int\!\!\int_{\left( -a,0\right) \times \left( 0,a\right) }\mathbf{1}
_{\left\{ -x>y\right\} }\frac{1}{y-x}d\sigma \left( y\right) \;d\omega
\left( x\right) \\
&&+\int\!\!\int_{\left( -a,0\right) \times \left( 0,a\right) }\mathbf{1}
_{\left\{ -x<y\right\} }\frac{1}{y-x}d\sigma \left( y\right) \;d\omega
\left( x\right) \\
&=&I+I\!I.
\end{eqnarray*}%
These two terms are symmetric in $\omega $ and $\sigma $ so we consider only
the first one $I$. We have letting $z=-x$ and $d\widetilde{\omega }\left(
z\right) =d\omega \left( -z\right) $ and $d\widehat{\omega }\left( z\right) =%
\frac{1}{z^{2}}d\widetilde{\omega }\left( z\right) $,%
\begin{eqnarray*}
I &=&\int_{0}^{a} \int_{0}^{z}\frac{1}{y+z}d\sigma \left( y\right) d\omega
\left( -z\right) \leq \int_{0}^{a} \frac{1}{z}\int_{0}^{z}d\sigma d%
\widetilde{\omega }\left( z\right) \\
&\lesssim &\left[ \int_{0}^{a}d\widetilde{\omega } \int_{0}^{a}\left(
\int_{0}^{z}d\sigma \right) ^{2}d\widehat{\omega }\left( z\right) \right] ^{%
\frac{1}{2}} \\
&\lesssim &B\left[ \int_{-a}^{0}d\omega \times \int_{0}^{a}d\sigma \right]^{%
\frac{1}{2}}=B\sqrt{\omega( J_{L})\sigma( I_{1})},
\end{eqnarray*}%
upon using \eqref{Muck} with $f\equiv 1$. Finally we obtain $B\lesssim 
\mathcal{A}_{2}$ from \eqref{B is} and 
\begin{eqnarray*}
\int_{r}^{a}d\widehat{\omega } \int_{0}^{r}d\sigma &=& \int_{r}^{a}rd%
\widehat{\omega } \times \frac{1}{r} \int_{0}^{r}d\sigma \\
&\leq& \mathsf{P}(\left( 0,r\right) ,\widetilde{\omega }) \times \frac{1}{r}%
\int_{\left( 0,r\right) }d\sigma \\
&=&\mathsf{P}(\left( -r,0\right) ,\omega ) \times \frac{1}{r}\int_{\left(
0,r\right) }d\sigma \lesssim \mathcal{A}_{2}^{2}.
\end{eqnarray*}%
Thus we have proved $\mathcal{W}\leq \mathcal{H}+C\mathcal{A}_{2}$. We
obtain $\mathcal{W}\leq \mathcal{H}^{\ast }+C\mathcal{A}_{2}$ by applying
the above reasoning to%
\begin{equation*}
\int_{J}H(\mathbf{1}_{I}\sigma )\;d\omega =\int_{I}H(\mathbf{1}_{J}\omega
)\;d\sigma .
\end{equation*}
\end{proof}


\subsection{The Energy Condition\label{section energy}}

We show here that the Energy Conditions are implied by the $A_{2}$ and
testing conditions.

\begin{proposition}
\label{p.energy}We have the inequality $\mathcal{E}\lesssim \mathcal{A}_{2}+%
\mathcal{H}\,,$ and similarly for $\mathcal{E}^{\ast }$.
\end{proposition}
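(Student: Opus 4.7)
The plan is to reduce the Energy Condition, for an arbitrary partition $I_0=\bigcup_r I_r$, to the testing condition $\mathcal{H}$ by proving the termwise estimate
\begin{equation*}
\omega(I_r)\mathsf{E}(I_r,\omega)^2\mathsf{P}(I_r,\sigma\mathbf{1}_{I_0\setminus I_r})^2\leq \int_{I_r}|H(\sigma\mathbf{1}_{I_0\setminus I_r})|^2\,d\omega,
\end{equation*}
and then summing. The bridge is the variance representation
\begin{equation*}
\omega(I_r)^2|I_r|^2\mathsf{E}(I_r,\omega)^2=\tfrac{1}{2}\int_{I_r}\!\int_{I_r}(x-x')^2\,d\omega(x)\,d\omega(x'),
\end{equation*}
which is already displayed just before the Energy Condition in the excerpt.

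First I would split $\mathsf{P}(I_r,\sigma\mathbf{1}_{I_0})^2\leq 2\mathsf{P}(I_r,\sigma\mathbf{1}_{I_r})^2+2\mathsf{P}(I_r,\sigma\mathbf{1}_{I_0\setminus I_r})^2$. The \emph{self} part contributes $\sum_r\omega(I_r)\mathsf{E}^2\mathsf{P}(I_r,\sigma\mathbf{1}_{I_r})^2\lesssim\sum_r\omega(I_r)\sigma(I_r)^2/|I_r|^2\lesssim\mathcal{A}_2^2\sigma(I_0)$, using $\mathsf{E}\leq 1$, the trivial bound $\mathsf{P}(I_r,\sigma\mathbf{1}_{I_r})\lesssim\sigma(I_r)/|I_r|$, and the pointwise estimate $\omega(I_r)\sigma(I_r)/|I_r|^2\leq\mathsf{P}(I_r,\omega)\mathsf{P}(I_r,\sigma)\leq\mathcal{A}_2^2$ from \eqref{A2}.

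For the \emph{tail} part, the crux is that for $y\in I_0\setminus I_r$ and $x,x'\in I_r$ the factors $x-y,x'-y$ have the same sign and both have absolute value at most $|I_r|+\textup{dist}(y,I_r)$. Hence the identity
\begin{equation*}
H(\sigma\mathbf{1}_{I_0\setminus I_r})(x)-H(\sigma\mathbf{1}_{I_0\setminus I_r})(x')=(x'-x)\!\!\int_{I_0\setminus I_r}\!\!\frac{d\sigma(y)}{(x-y)(x'-y)}
\end{equation*}
has a non-oscillating $y$-integrand, yielding the pointwise lower bound
\begin{equation*}
|H(\sigma\mathbf{1}_{I_0\setminus I_r})(x)-H(\sigma\mathbf{1}_{I_0\setminus I_r})(x')|\geq\frac{|x-x'|}{|I_r|}\mathsf{P}(I_r,\sigma\mathbf{1}_{I_0\setminus I_r}).
\end{equation*}
Squaring, integrating against $d\omega(x)\,d\omega(x')$ on $I_r\times I_r$, dividing by $2\omega(I_r)$ and invoking the variance representation produces the termwise bound, after relaxing the $\omega$-mean centering to the centering by $0$.

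Finally, writing $H(\sigma\mathbf{1}_{I_0\setminus I_r})=H(\sigma\mathbf{1}_{I_0})-H(\sigma\mathbf{1}_{I_r})$ and using $(a-b)^2\leq 2a^2+2b^2$, the sum $\sum_r\int_{I_r}|H(\sigma\mathbf{1}_{I_0\setminus I_r})|^2\,d\omega$ is bounded by $2\int_{I_0}|H(\sigma\mathbf{1}_{I_0})|^2\,d\omega+2\sum_r\int_{I_r}|H(\sigma\mathbf{1}_{I_r})|^2\,d\omega\leq 4\mathcal{H}^2\sigma(I_0)$, where the disjointness of $\{I_r\}$ inside $I_0$ is used in the first term and the testing hypothesis \eqref{e.H1} on each $I_r$ in the second. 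Together with the $A_2$ contribution this gives $\mathcal{E}\lesssim\mathcal{A}_2+\mathcal{H}$. The most delicate point I expect is the sign-definiteness of the $y$-integrand: without it a Cauchy--Schwarz bound on $\int d\sigma(y)/(|x-y||x'-y|)$ would destroy the matching with the Poisson kernel and only reproduce a pivotal-type estimate. The dual bound $\mathcal{E}^\ast\lesssim\mathcal{A}_2+\mathcal{H}^\ast$ is obtained by swapping the roles of $\omega$ and $\sigma$.
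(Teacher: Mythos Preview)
Your proof is correct and follows essentially the same approach as the paper's own proof: both split the Poisson term into the ``self'' piece $\sigma\mathbf{1}_{I_r}$ (controlled by $\mathcal{A}_2$) and the ``outside'' piece $\sigma\mathbf{1}_{I_0\setminus I_r}$, and for the latter both exploit the sign-definiteness of $(x-y)(x'-y)$ for $x,x'\in I_r$, $y\notin I_r$ to turn the difference $H(\sigma\mathbf{1}_{I_0\setminus I_r})(x)-H(\sigma\mathbf{1}_{I_0\setminus I_r})(x')$ into a lower bound involving $\tfrac{|x-x'|}{|I_r|}\mathsf{P}(I_r,\sigma\mathbf{1}_{I_0\setminus I_r})$, then square, average in $\omega\times\omega$, and finish with the testing condition \eqref{e.H1} applied to both $I_0$ and each $I_r$. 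The paper packages the monotonicity step as a separate lemma (Lemma~\ref{l.neccInequality}) and phrases the split as a ``which term dominates'' dichotomy rather than your direct $\mathsf{P}^2\le 2\mathsf{P}_{\text{self}}^2+2\mathsf{P}_{\text{tail}}^2$, but these are cosmetic differences.
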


The Energy Hypotheses with $\gamma >0$ are the essential tools in organizing
the sufficient proof.  The proof begins with this Lemma.

\begin{lemma}
\label{l.neccInequality} For any interval $I$ and any positive measure $\nu $
supported in $\mathbb{R}\setminus I$, we have%
\begin{equation}
\mathsf{P}\left( I;\nu \right) \leq 2\left\vert I\right\vert \inf_{x,y\in I}%
\frac{H \nu \left( x\right) -H \nu \left( y\right) }{x-y},  \label{neccinequ}
\end{equation}
\end{lemma}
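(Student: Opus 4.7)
The plan is to reduce the inequality to a pointwise comparison of kernels under the integral sign. Fix $x,y \in I$ with $x \ne y$, and compute the finite-difference quotient of $H\nu$ using the algebraic identity
\begin{equation*}
\frac{1}{x-z} - \frac{1}{y-z} = \frac{y-x}{(x-z)(y-z)}.
\end{equation*}
Since $\nu$ is supported off $I$, the integral defining $H\nu(x)$ is absolutely convergent at each point of $I$, and no principal value is needed. Dividing by $x - y$ and integrating against $\nu$ gives
\begin{equation*}
\frac{H\nu(x) - H\nu(y)}{x - y} \;=\; -\int \frac{d\nu(z)}{(x-z)(y-z)}.
\end{equation*}

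The key observation is that $(x-z)$ and $(y-z)$ have the same sign for every $z \notin I$ and every $x,y \in I$, so the integrand $(x-z)(y-z)$ equals $|x-z|\,|y-z|$ and the signed slope above is of constant sign in $x,y$. Thus the magnitude of the slope equals $\int \frac{d\nu(z)}{|x-z|\,|y-z|}$, which is the quantity we really want to estimate from below by $\mathsf{P}(I,\nu)/(2|I|)$.

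Next, I would derive the pointwise bound: for any $x, y \in I$ and any $z \notin I$, the triangle inequality gives $|x-z| \le |I| + \operatorname{dist}(z,I)$ and similarly for $|y-z|$, hence
\begin{equation*}
|x-z|\,|y-z| \;\le\; \bigl(|I| + \operatorname{dist}(z,I)\bigr)^2.
\end{equation*}
Rearranging yields $\frac{|I|}{(|I|+\operatorname{dist}(z,I))^{2}} \le \frac{|I|}{|x-z|\,|y-z|}$ pointwise in $z$. Integrating against $d\nu$, one obtains
\begin{equation*}
\mathsf{P}(I,\nu) \;\le\; |I|\int \frac{d\nu(z)}{|x-z|\,|y-z|},
\end{equation*}
and taking the infimum over $x,y \in I$ gives the desired estimate (the factor $2$ in \eqref{neccinequ} provides slack beyond what the argument actually needs).

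There is essentially no hard step here; the only thing to be careful about is the sign of the finite-difference quotient and the convention that lets us identify $\bigl|\tfrac{H\nu(x)-H\nu(y)}{x-y}\bigr|$ with the positive integral $\int d\nu(z)/(|x-z|\,|y-z|)$. Everything else is a clean pointwise comparison followed by integration, with the bound $|x-z|,|y-z| \le |I| + \operatorname{dist}(z,I)$ for $z \notin I$ and $x,y \in I$ as the only ingredient beyond arithmetic.
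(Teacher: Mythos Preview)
Your proof is correct and follows essentially the same approach as the paper: express the difference quotient as $\int \frac{d\nu(z)}{(z-x)(z-y)}$, observe the integrand has constant sign for $z\notin I$, and bound the denominator pointwise to recover the Poisson kernel. The only difference is cosmetic: the paper translates so that $I=(-a,a)$ and uses the bound $(z-x)(z-y)\le 4z^{2}$, which matches their center-based redefinition \eqref{redef} of $\mathsf{P}(I;\nu)$ and produces the constant $2$; you instead use $|x-z|\,|y-z|\le(|I|+\operatorname{dist}(z,I))^{2}$, which matches the original definition \eqref{e.P} directly and in fact yields the inequality with constant $1$. Your handling of the sign convention is also appropriate given the paper's own inconsistency between \eqref{e.H} and the computation in its proof.
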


For specificity, in this section, we are re-defining the Poisson integral to
be 
\begin{equation}  \label{redef}
\mathsf{P}\left( I;\nu \right) \equiv \frac{\nu (I)}{\left\vert I\right\vert 
} +\frac{\left\vert I\right\vert }{2}\int_{\mathbb{R}\setminus I}\frac{1}{%
\left\vert z-z_{I}\right\vert ^{2}}\nu \left(dz\right) ,
\end{equation}%
with $z_{I}$ the center of $I$. Note that this definition of $\mathsf{P}%
\left( I;\nu \right) $ is comparable to that in \eqref{e.P}. Note that $%
H\left( \mathbf{1 }_{I^{c}}\nu \right) $ is increasing on $I$ when $\nu $ is
positive, so that the infimum in \eqref{neccinequ} is nonnegative.

\begin{proof}
To see \eqref{neccinequ}, we suppose without loss of generality that $%
I=\left( -a,a\right) $, and a calculation then shows that for $-a\leq
x<y\leq a$,%
\begin{align*}
H\nu \left( y\right) -H\nu \left( x\right) & =\int_{\mathbb{R}\setminus
I}\left\{ \frac{1}{z-y}-\frac{1}{z-x}\right\} \nu (dz) \\
& =\left( y-x\right) \int_{\mathbb{R}\setminus I}\frac{1}{\left( z-y\right)
\left( z-x\right) }\nu (dz) \\
& \geq \frac{1}{4}\left( y-x\right) \int_{\mathbb{R}\setminus I}\frac{1}{%
z^{2}}\nu (dz)
\end{align*}%
since ${\left( z-y\right) \left( z-x\right) }$ is positive and satisfies%
\begin{equation*}
\frac{1}{\left( z-y\right) \left( z-x\right) }\geq \frac{1}{4z^{2}}
\end{equation*}%
on each interval $\left( -\infty ,-a\right) $ and $\left( a,\infty \right) $
in $\mathbb{R}\setminus I$ when $-a\leq x<y\leq a$. Thus we have from %
\eqref{redef}, and the assumption about the support of $\nu $, 
\begin{eqnarray*}
\mathsf{P}\left( I;\nu \right) &= & \frac{\left\vert I\right\vert }{2}\int_{%
\mathbb{R}\setminus I}\frac{1}{z^{2}}\nu (dz) \\
&\lesssim &2\left\vert I\right\vert \inf_{x,y\in I}\frac{H\nu \left(
y\right) -H\nu \left( x\right) }{y-x}.
\end{eqnarray*}
\end{proof}

\begin{proof}[Proof of Proposition~\protect\ref{p.energy}]
We recall the energy condition in \eqref{energy condition}. Fix an interval $%
I_{0}$, and pairwise disjoint strict subintervals $\{I_{r}\;:\;r\geq 1\}$.
Let $\{I _{s} \;:\; s\geq 1 \}$ be pairwise disjoint subintervals of $I _{r}$%
.

We apply \eqref{neccinequ}, so that for $x,y\in I_{r}$, we have 
\begin{eqnarray}
\frac{\left\vert y-x\right\vert }{\lvert I_{r}\rvert }\mathsf{P}\left( I_{r};%
\mathbf{1}_{I_{0}}\sigma \right) &\lesssim &\frac{\left\vert y-x\right\vert 
}{\left\vert I_{r}\right\vert } \frac{\sigma (I_{r}) } {\lvert I_r\rvert }
\label{e.2tgt} 
+\left\vert H\left( \mathbf{1}_{I_{0}\cap I_{r}^{c}}\sigma \right) \left(
y\right) -H\left( \mathbf{1}_{I_{0}\cap I_{r}^{c}}\sigma \right) \left(
x\right) \right\vert .  \notag
\end{eqnarray}%
Let us for the moment assume that the second term on the right is dominant.
Squaring the inequality above, averaging with respect to the measure $\omega 
$ in both $x$ and $y$, we obtain 
\begin{align}
\mathsf{E}(I_{r},\omega )^{2}\mathsf{P}\left( I_{r};\mathbf{1}_{I_{0}}\sigma
\right) ^{2}& \leq \mathbb{E}_{I_{r}}^{\omega (dx)}\mathbb{E}%
_{I_{r}}^{\omega (dy)}\left( \frac{\left\vert y-x\right\vert }{\left\vert
I_{r}\right\vert }\right) ^{2}\mathsf{P}\left( I_{r};\mathbf{1}%
_{I_{0}}\sigma \right) ^{2}  \label{e.strict!} \\
& \lesssim \mathbb{E}_{I_{r}}^{\omega (dx)}\mathbb{E}_{I_{r}}^{\omega
(dy)}\left\vert H\left( \mathbf{1}_{I_{0}\cap I_{r}^{c}}\sigma \right)
\left( y\right) -H\left( \mathbf{1}_{I_{0}\cap I_{r}^{c}}\sigma \right)
\left( x\right) \right\vert ^{2} \\
& \leq 4\mathbb{E}_{I_{r}}^{\omega (dx)}\left\vert H\left( \mathbf{1}%
_{I_{0}\cap I_{r}^{c}}\sigma \right) \right\vert ^{2}\,.
\end{align}

Multiply the last inequality by $\omega (I_{r})$ and sum in $r$ to get 
\begin{align*}
\sum_{r\geq 1}\omega (I_{r})\mathsf{E}(I_{r},\omega )^{2}\mathsf{P}\left(
I_{r};\mathbf{1}_{I_{0}}\sigma \right) ^{2}& \lesssim
\sum_{r}\int_{I_{r}}\left\vert H\left( \mathbf{1}_{I_{0}\cap
I_{r}^{c}}\sigma \right) \right\vert ^{2}d\omega \\
& \lesssim \int_{I_{0}}\left\vert H\left( \mathbf{1}_{I_{0}}\sigma \right)
\right\vert ^{2}d\omega +C\sum_{r}\int_{I_{r}}\left\vert H\left( \mathbf{1}%
_{I_{r}}\sigma \right) \right\vert ^{2}d\omega \\
& \leq2 \mathcal{H}^{2}\sigma (I_{0})
\end{align*}%
by the testing condition \eqref{e.H1} applied to both $I_{0}$ and $I_{r}$.

Returning to \eqref{e.2tgt}, it remains to consider the case where the first
term on the right is dominant. By the same reasoning, we arrive at 
\begin{align*}
\mathsf{E}(I_{r},\omega )^{2}\mathsf{P}\left( I_{r};\mathbf{1}_{I_{0}}\sigma
\right) ^{2}& \leq \mathbb{E}_{I_{r}}^{\omega (dx)}\mathbb{E}%
_{I_{r}}^{\omega (dy)}\frac{\left\vert y-x\right\vert ^{2}}{\left\vert
I_{r}\right\vert ^{2}}\frac{\sigma (I_{r})^{2}}{\lvert I_{r}\rvert ^{2}} \\
& \leq \frac{\sigma (I_{r})^{2}}{\lvert I_{r}\rvert ^{2}}\,.
\end{align*}%
Multiply the last inequality by $\omega (I_{r})$ and sum in $r$ to get 
\begin{equation*}
\sum_{r=1}^{\infty }\frac{\sigma (I_{r})^{2}}{\lvert I_{r}\rvert ^{2}}\omega
(I_{r})\leq \mathcal{A}_{2}^{2}\sum_{r=1}^{\infty }\sigma (I_{r})\leq 
\mathcal{A}_{2}^{2}\sigma (I_{0})\,.
\end{equation*}
\end{proof}

\begin{remark}
We refer to $\mathsf{E}(I,\omega )$ as the energy functional because in
dimension $n\geq 3$ the integral%
\begin{equation*}
\int_{I}\int_{I}\left\vert x-x^{\prime }\right\vert ^{2-n}d\omega \left(
x\right) d\omega \left( x^{\prime }\right)
\end{equation*}%
represents the energy required to \emph{compress} charge from infinity to a
distribution $\omega $ on $I$, assuming a repulsive inverse square law
force. When $n=1$, the force is attractive and the integral%
\begin{equation*}
\int_{I}\int_{I}\left\vert x-x^{\prime }\right\vert d\omega \left( x\right)
d\omega \left( x^{\prime }\right)
\end{equation*}%
represents the energy required to \emph{disperse} charge from a point to a
distribution $\omega $ on $I$.
\end{remark}

\subsection{The Hybrid Condition}

We begin with a monotonicity property of energy, and then apply it to show
that the Hybrid Condition implies the Energy Hypothesis. This Lemma helps
clarify the role of the Hybrid Conditions.


\begin{lemma}
\label{l.monotone} Fix $0\leq \epsilon \leq 2$. Let $I_{0}$ be an interval,
and $\{I_{r}\;:\;r\geq 1\}$ a partition of $I_{0}$. We have the inequalities
for $0<\varepsilon <1-\frac{\epsilon }{2}$: 
\begin{align}
\sum_{r\geq 1}\omega (I_{r})\lvert I_{r}\rvert ^{\epsilon }\mathsf{E}%
(I_{r};\omega )^{\epsilon }& \leq \omega (I_{0})\lvert I_{0}\rvert
^{\epsilon }\mathsf{E}(I_{0};\omega )^{\epsilon }\,.  \label{e.mono<} \\
\sum_{r\geq 1}\omega (I_{r})\lvert I_{r}\rvert ^{2-2\varepsilon }\mathsf{E}%
(I_{r};\omega )^{\epsilon }& \leq \sup_{r\geq 1}\left( \frac{\lvert
I_{r}\rvert }{\lvert I_{0}\rvert }\right) ^{2-2\varepsilon -\epsilon }\omega
(I_{0})\lvert I_{0}\rvert ^{2-2\varepsilon }\mathsf{E}(I_{0};\omega
)^{\epsilon }\,.
\end{align}
\end{lemma}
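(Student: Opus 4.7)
The plan is to recognize $|I|\,\mathsf{E}(I,\omega)$ as the $\omega$-standard deviation of the identity function on $I$, and then to deduce both estimates from the classical variance decomposition combined with Jensen's inequality.

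\textbf{Step 1: Reinterpret $\mathsf{E}$ as a standard deviation.} Using the identity for $\mathsf{E}(I,\omega)^{2}$ displayed just before Definition~\ref{d.energy}, one expands the square and uses Fubini to obtain
\begin{equation*}
|I|^{2}\,\mathsf{E}(I,\omega)^{2} \;=\; \mathbb{E}_{I}^{\omega}\bigl[x^{2}\bigr]-\bigl(\mathbb{E}_{I}^{\omega}[x]\bigr)^{2} \;=\; \sigma_{I}^{2},
\end{equation*}
where $\sigma_{I}^{2}$ denotes the variance of $x$ under the probability measure $\omega|_{I}/\omega(I)$. Thus $|I|^{\epsilon}\mathsf{E}(I,\omega)^{\epsilon}=\sigma_{I}^{\epsilon}$, which converts both target inequalities into statements about $\sigma_{I}$.

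\textbf{Step 2: Prove \eqref{e.mono<}.} In terms of variances, \eqref{e.mono<} reads
\begin{equation*}
\sum_{r\ge1}\omega(I_{r})\,\sigma_{I_{r}}^{\epsilon}\;\le\;\omega(I_{0})\,\sigma_{I_{0}}^{\epsilon}.
\end{equation*}
The law of total variance applied to the partition $\{I_{r}\}$ yields
\begin{equation*}
\sigma_{I_{0}}^{2} \;=\; \sum_{r\ge1}\frac{\omega(I_{r})}{\omega(I_{0})}\,\sigma_{I_{r}}^{2} \;+\; \sum_{r\ge1}\frac{\omega(I_{r})}{\omega(I_{0})}\bigl(\bar{x}_{I_{r}}-\bar{x}_{I_{0}}\bigr)^{2} \;\ge\; \sum_{r\ge1}\frac{\omega(I_{r})}{\omega(I_{0})}\,\sigma_{I_{r}}^{2},
\end{equation*}
where $\bar{x}_{I}$ is the $\omega$-mean over $I$. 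Since $\epsilon/2\le 1$, the function $t\mapsto t^{\epsilon/2}$ is concave on $[0,\infty)$, so Jensen's inequality against the probability weights $\omega(I_{r})/\omega(I_{0})$ gives
\begin{equation*}
\sum_{r\ge1}\frac{\omega(I_{r})}{\omega(I_{0})}\,\sigma_{I_{r}}^{\epsilon} \;\le\; \Bigl(\sum_{r\ge1}\frac{\omega(I_{r})}{\omega(I_{0})}\,\sigma_{I_{r}}^{2}\Bigr)^{\epsilon/2} \;\le\; \sigma_{I_{0}}^{\epsilon}.
\end{equation*}
Multiplying by $\omega(I_{0})$ and using Step 1 closes \eqref{e.mono<}.

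\textbf{Step 3: Deduce the second inequality from \eqref{e.mono<}.} Since $\varepsilon<1-\epsilon/2$ the exponent $\alpha:=2-2\varepsilon-\epsilon$ is strictly positive, so
\begin{align*}
\sum_{r\ge1}\omega(I_{r})\,|I_{r}|^{2-2\varepsilon}\mathsf{E}(I_{r};\omega)^{\epsilon}
&\;=\;\sum_{r\ge1}\omega(I_{r})\,|I_{r}|^{\alpha}\,\sigma_{I_{r}}^{\epsilon}\\
&\;\le\;\Bigl(\sup_{r\ge1}|I_{r}|\Bigr)^{\alpha}\sum_{r\ge1}\omega(I_{r})\,\sigma_{I_{r}}^{\epsilon}\\
&\;\le\;\Bigl(\sup_{r\ge1}|I_{r}|\Bigr)^{\alpha}\omega(I_{0})\,\sigma_{I_{0}}^{\epsilon}\\
&\;=\;\sup_{r\ge1}\Bigl(\frac{|I_{r}|}{|I_{0}|}\Bigr)^{2-2\varepsilon-\epsilon}\omega(I_{0})\,|I_{0}|^{2-2\varepsilon}\mathsf{E}(I_{0};\omega)^{\epsilon},
\end{align*}
where the second inequality is \eqref{e.mono<} and the last line rewrites $|I_{0}|^{\alpha}\sigma_{I_{0}}^{\epsilon}=|I_{0}|^{2-2\varepsilon}\mathsf{E}(I_{0};\omega)^{\epsilon}$ by Step 1.

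There is no real obstacle here; the only genuine input is the total-variance identity, and the mild technical point is the concavity of $t\mapsto t^{\epsilon/2}$ (requiring $\epsilon\le 2$) together with checking that the exponent $\alpha=2-2\varepsilon-\epsilon$ is positive, which is exactly the hypothesis $0<\varepsilon<1-\epsilon/2$ and allows pulling $(\sup_{r}|I_{r}|)^{\alpha}$ out of the sum.
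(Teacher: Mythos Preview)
Your proof is correct and follows essentially the same approach as the paper: both recognize $\omega(I)\lvert I\rvert^{2}\mathsf{E}(I;\omega)^{2}$ as a variance, use the total-variance decomposition to handle $\epsilon=2$, and then pass to general $\epsilon$ by concavity (you phrase it as Jensen, the paper as H\"older, but these give the identical inequality). The paper also deduces the second inequality from the first exactly as you do in Step~3, though it simply calls this step ``obvious.''
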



The second inequality is obvious given the first; as it turns out this is
the basic fact used to exploit the Hybrid Conditions for $0\leq \epsilon <2$%
, so we have stated it explicitly.


\begin{proof}
The inequality is obvious for $\epsilon =0$. We prove it for $\epsilon =2$.
This is rather clear if we make the definition 
\begin{equation*}
\mathsf{Var}_{I}^{\omega }\equiv \omega \left( I\right) \ \mathbb{E}%
_{I}^{\omega }(x-\mathbb{E}_{I}^{\omega }x)^{2}\,.
\end{equation*}%
Then, we have $\omega \left( I\right) \lvert I\rvert ^{2}\mathsf{E}(I;\omega
)^{2}=\mathsf{Var}_{I}^{\omega }$.

Second, variation of a random variable $Z$ is the squared $L^{2}$-distance
of $Z$ from the linear space of constants. And $\omega \left( I_{0}\right)
\lvert I_{0}\rvert ^{2}\mathsf{E}(I_{0};\omega )^{2}$ admits a transparent
reformulation in this language: The random variable is $x$ and the
probability measure is normalized $\omega $ measure. Moreover, 
\begin{equation*}
\sum_{r\geq 1}\omega (I_{r})\lvert I_{r}\rvert ^{2}\mathsf{E}(I_{r};\omega
)^{2}
\end{equation*}%
is the squared $L^{2}$-distance of $x$ to the space of functions piecewise
constant on the intervals of the partition $\{I_{r}\;:\;r\geq 1\}$. Hence,
the inequality above is immediate.

\smallskip For the case of $0<\epsilon <2$, we apply H\"{o}lder's inequality
and appeal to the case of $\epsilon =2$. 
\begin{align*}
\left( \sum_{r\geq 1}\omega (I_{r})\lvert I_{r}\rvert ^{\epsilon }\mathsf{E}%
(I_{r};\omega )^{\epsilon }\right) ^{1/\epsilon }& \leq \omega
(I_{0})^{(2-\epsilon )/2\epsilon }\left( \sum_{r\geq 1}\omega (I_{r})\lvert
I_{r}\rvert ^{2}\mathsf{E}(I_{r};\omega )^{2}\right) ^{1/2} \\
& \leq \omega (I_{0})^{(2-\epsilon )/2\epsilon }\omega (I_{0})^{1/2}\lvert
I_{0}\rvert \mathsf{E}(I_{0},\omega )
\end{align*}%
which is the claimed inequality.
\end{proof}


Here is a Poisson inequality for good intervals that will see service both
here and later in the paper.

\begin{lemma}
\label{Poisson inequality}Suppose that $J\subset I\subset \widehat{I}$ and
that $\operatorname{dist}(J,e(I))>\tfrac{1}{2}\lvert J\rvert ^{\varepsilon }\lvert
I\rvert ^{1-\varepsilon }$. Then%
\begin{equation}
\lvert J\rvert ^{2\varepsilon -2}\mathsf{P}(J,\sigma \mathbf{1}_{\widehat{I}%
\setminus I})^{2}\lesssim \lvert I\rvert ^{2\varepsilon -2}\mathsf{P}%
(I,\sigma \mathbf{1}_{\widehat{I}\setminus I})^{2}.  \label{e.Jsimeq}
\end{equation}
\end{lemma}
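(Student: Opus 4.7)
The plan is to derive the inequality from a pointwise comparison of the Poisson integrands on $\widehat{I}\setminus I$, and then integrate against $d\sigma$.

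First, I would obtain a lower bound on $|J| + \operatorname{dist}(x, J)$ for $x \in \widehat{I}\setminus I$. Because $J \subset I$ and $x$ lies outside $I$, elementary geometry gives $\operatorname{dist}(x, J) = \operatorname{dist}(x, I) + \operatorname{dist}(J, \partial I)$, where $\partial I$ is the endpoint of $I$ on the same side as $x$. Since $\operatorname{dist}(J,\partial I) \geq \operatorname{dist}(J, e(I))$, the goodness hypothesis gives
\[
|J| + \operatorname{dist}(x, J) \;\gtrsim\; |J|^{\varepsilon} |I|^{1-\varepsilon} + \operatorname{dist}(x, I), \qquad x \in \widehat{I}\setminus I.
\]

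Second, I would establish the pointwise comparison
\[
\frac{|J|}{(|J| + \operatorname{dist}(x, J))^{2}} \;\lesssim\; \left(\frac{|J|}{|I|}\right)^{\!1-\varepsilon} \frac{|I|}{(|I| + \operatorname{dist}(x, I))^{2}},
\qquad x \in \widehat{I}\setminus I,
\]
by a short case analysis. When $\operatorname{dist}(x,I) \leq |I|$, the denominator on the right is comparable to $|I|^{2}$ while the lower bound above forces the denominator on the left to be at least $(|J|^{\varepsilon}|I|^{1-\varepsilon})^{2}$, and arithmetic with the prefactors $|J|$ and $|I|$ produces the claimed power of $|J|/|I|$. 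When $\operatorname{dist}(x,I) > |I|$, we simply have $|J| + \operatorname{dist}(x,J) \approx \operatorname{dist}(x,I) \approx |I| + \operatorname{dist}(x,I)$, and the ratio of integrands reduces to $|J|/|I|$, which is dominated by $(|J|/|I|)^{1-\varepsilon}$ since $|J|\leq|I|$ and $1-\varepsilon < 1$.

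Third, integrating the pointwise inequality against $d\sigma$ on $\widehat{I}\setminus I$ yields
\[
\mathsf{P}(J, \sigma \mathbf{1}_{\widehat{I}\setminus I}) \;\lesssim\; \left(\frac{|J|}{|I|}\right)^{\!1-\varepsilon}\!\mathsf{P}(I, \sigma \mathbf{1}_{\widehat{I}\setminus I}).
\]
Squaring and multiplying through by $|J|^{2\varepsilon - 2}$ converts the factor $(|J|/|I|)^{2-2\varepsilon}$ into the desired $|I|^{2\varepsilon - 2}$ on the right, completing the proof. The main obstacle is justifying the pointwise estimate with exactly the exponent $1-\varepsilon$ in the transition regime where $\operatorname{dist}(x,I)$ is comparable to $|J|^{\varepsilon}|I|^{1-\varepsilon}$; this is where the exact form of the goodness condition, and the additive structure of the lower bound for $|J|+\operatorname{dist}(x,J)$, must be exploited carefully so that the contribution of the subregion $\{\operatorname{dist}(x,I)\lesssim |J|^{\varepsilon}|I|^{1-\varepsilon}\}$ absorbs into the right-hand side with the claimed exponent rather than a weaker one.
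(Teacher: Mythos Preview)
Your pointwise-comparison approach is natural and more transparent than the paper's dyadic-annulus argument, but the arithmetic in your first case is wrong. When $\operatorname{dist}(x,I)\le |I|$ you bound the $J$-integrand above by $|J|\big/(|J|^{\varepsilon}|I|^{1-\varepsilon})^{2}=|J|^{1-2\varepsilon}|I|^{2\varepsilon-2}$ and the $I$-integrand below by $c/|I|$; the ratio is $(|J|/|I|)^{1-2\varepsilon}$, \emph{not} $(|J|/|I|)^{1-\varepsilon}$. The pointwise inequality with exponent $1-\varepsilon$ is in fact false, and so is the integrated one: take $I=[0,1]$, $J$ of length $\delta$ sitting at distance $\delta^{\varepsilon}$ from the right endpoint of $I$ (so the hypothesis holds), and let $\sigma$ be a unit point mass just to the right of $I$ inside $\widehat{I}$. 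Then $\mathsf{P}(J,\sigma\mathbf{1}_{\widehat{I}\setminus I})\approx\delta/\delta^{2\varepsilon}=\delta^{1-2\varepsilon}$ while $\mathsf{P}(I,\sigma\mathbf{1}_{\widehat{I}\setminus I})\approx 1$, and $\delta^{1-2\varepsilon}/\delta^{1-\varepsilon}=\delta^{-\varepsilon}\to\infty$. So the ``obstacle'' you flag is not a matter of extra care in the transition regime; the claimed exponent simply cannot be attained.

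The paper proceeds differently, decomposing $\mathsf{P}(J,\cdot)\approx\sum_{k}2^{-k}|2^{k}J|^{-1}\sigma\big((2^{k}J)\cap(\widehat{I}\setminus I)\big)$ and noting that only scales with $2^{-k}\lesssim(|J|/|I|)^{1-\varepsilon}$ contribute. But its concluding step $\mathsf{P}(J,\cdot)\lesssim 2^{-k}\mathsf{P}(I,\cdot)$ is asserted without justification, and when one fills in the comparison of each surviving term with $\mathsf{P}(I,\cdot)$ one again obtains only $(|J|/|I|)^{1-2\varepsilon}$. Both your argument and the paper's thus correctly establish
\[
\mathsf{P}(J,\sigma\mathbf{1}_{\widehat{I}\setminus I})\;\lesssim\;\bigl(|J|/|I|\bigr)^{1-2\varepsilon}\,\mathsf{P}(I,\sigma\mathbf{1}_{\widehat{I}\setminus I}),
\]
which is sharp; the stated lemma carries an apparent slip in the exponent. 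This is harmless for the applications in the paper (replace $\varepsilon$ by $\varepsilon/2$, or equivalently $2\varepsilon-2$ by $4\varepsilon-2$ in the display), but your write-up should claim and prove the $1-2\varepsilon$ version rather than chase an exponent the data do not support.
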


\begin{proof}
We have%
\begin{equation*}
\mathsf{P}\left( J,\sigma \chi _{\widehat{I}\setminus I}\right) \approx
\sum_{k=0}^{\infty }2^{-k}\frac{1}{\left\vert 2^{k}J\right\vert }%
\int_{\left( 2^{k}J\right) \cap \left( \widehat{I}\setminus I\right)
}d\sigma ,
\end{equation*}%
and $\left( 2^{k}J\right) \cap \left( \widehat{I}\setminus I\right) \neq
\emptyset $ requires%
\begin{equation*}
dist\left( J,e\left( I\right) \right) \lesssim \left\vert 2^{k}J\right\vert .
\end{equation*}%
By our distance assumption we must then have%
\begin{equation*}
\left\vert J\right\vert ^{\varepsilon }\left\vert I\right\vert
^{1-\varepsilon }\leq dist\left( J,e\left( I\right) \right) \lesssim
2^{k}\left\vert J\right\vert ,
\end{equation*}%
or%
\begin{equation*}
2^{-k}\lesssim \left( \frac{\left\vert J\right\vert }{\left\vert
I\right\vert }\right) ^{1-\varepsilon }.
\end{equation*}%
Thus we have%
\begin{equation*}
\mathsf{P}\left( J,\sigma \chi _{\widehat{I}\setminus I}\right) \lesssim
2^{-k}\mathsf{P}\left( I,\sigma \chi _{\widehat{I}\setminus I}\right)
\lesssim \left( \frac{\left\vert J\right\vert }{\left\vert I\right\vert }%
\right) ^{1-\varepsilon }\mathsf{P}\left( I,\sigma \chi _{\widehat{I}%
\setminus I}\right) ,
\end{equation*}%
which is the inequality \eqref{e.Jsimeq}.
\end{proof}

We can now obtain that the Hybrid Condition implies the Energy Hypothesis.

\begin{lemma}
\label{hybrid implies hypothesis}Let $0\leq \epsilon <2$. Then the functional%
\begin{equation*}
\Psi \left( J,E\right) \equiv \omega \left( J\right) \mathsf{E}\left(
J,\omega \right) ^{\epsilon }\mathsf{P}\left( J,\mathbf{1}_{E}\sigma \right)
^{2}
\end{equation*}%
satisfies the three properties in \eqref{Psi properties} with $0<\varepsilon
<1-\frac{\epsilon }{2}$. As a consequence, the Hybrid Condition (\ref{Hy Con}%
) implies the Energy Hypothesis \eqref{smallest condition} with $\gamma
=2-2\varepsilon -\epsilon $.
\end{lemma}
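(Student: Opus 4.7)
The plan is to verify each of the three properties listed in \eqref{Psi properties} for the specific functional $\Psi(J,E)=\omega(J)\mathsf{E}(J,\omega)^\epsilon\mathsf{P}(J,\mathbf{1}_E\sigma)^2$ with $\gamma=2-2\varepsilon-\epsilon$, and then deduce the Energy Hypothesis from the fact that $\Psi_{\gamma,\varepsilon}$ is, by construction, the pointwise smallest functional fulfilling the third line. The first two lines will follow essentially from the definitions and standing hypotheses; the bulk of the work is the third.

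For the first line, I would use $\mathsf{E}(I_0,\omega)\leq 1$ together with the elementary bound $\mathsf{P}(I_0,\mathbf{1}_{I_0}\sigma)\lesssim \sigma(I_0)/|I_0|$, and then invoke the $A_2$ condition \eqref{A2} in the form $\omega(I_0)\sigma(I_0)/|I_0|^2\lesssim \mathcal{A}_2^2$ to conclude $\Psi(I_0,I_0)\lesssim \mathcal{A}_2^2\sigma(I_0)$. The second line is precisely the Hybrid Condition \eqref{Hy Con}, hence holds with constant $\mathcal{E}_\epsilon^2$ by hypothesis.

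The third line is the main step. Fix an $\varepsilon$-good subpartition $\{J_r\}$ of $I_0\subset\widehat{I}$. Because the measure $\mathbf{1}_{\widehat{I}\setminus I_0}\sigma$ is supported outside $I_0$, each $J_r$ satisfies the distance hypothesis of Lemma~\ref{Poisson inequality}, yielding
\begin{equation*}
\mathsf{P}(J_r,\mathbf{1}_{\widehat{I}\setminus I_0}\sigma)^2\lesssim\left(\frac{|J_r|}{|I_0|}\right)^{\!2-2\varepsilon}\mathsf{P}(I_0,\mathbf{1}_{\widehat{I}\setminus I_0}\sigma)^2.
\end{equation*}
Inserting this into the definition of $\Phi(J_r,\widehat{I}\setminus I_0)$ and summing gives
\begin{equation*}
\sum_{r\geq 1}\Phi(J_r,\widehat{I}\setminus I_0)\lesssim\frac{\mathsf{P}(I_0,\mathbf{1}_{\widehat{I}\setminus I_0}\sigma)^2}{|I_0|^{2-2\varepsilon}}\sum_{r\geq 1}\omega(J_r)\mathsf{E}(J_r,\omega)^2|J_r|^{2-2\varepsilon}.
\end{equation*}
Since $\mathsf{E}(J_r,\omega)\leq 1$ and $\epsilon\leq 2$, I can replace $\mathsf{E}(J_r,\omega)^2$ by the larger $\mathsf{E}(J_r,\omega)^\epsilon$ and apply the second inequality of Lemma~\ref{l.monotone} (valid exactly when $0<\varepsilon<1-\epsilon/2$):
\begin{equation*}
\sum_{r\geq 1}\omega(J_r)|J_r|^{2-2\varepsilon}\mathsf{E}(J_r,\omega)^\epsilon\leq\sup_{r\geq 1}\!\left(\frac{|J_r|}{|I_0|}\right)^{\!2-2\varepsilon-\epsilon}\!\omega(I_0)|I_0|^{2-2\varepsilon}\mathsf{E}(I_0,\omega)^\epsilon.
\end{equation*}
Combining these displays and dominating $\mathsf{P}(I_0,\mathbf{1}_{\widehat{I}\setminus I_0}\sigma)\leq\mathsf{P}(I_0,\mathbf{1}_{\widehat{I}}\sigma)$ produces the third line with $\gamma=2-2\varepsilon-\epsilon$.

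For the concluding assertion, since $\Psi_{\gamma,\varepsilon}$ defined in \eqref{def Psi gamma} is by construction the pointwise-smallest functional satisfying the third line of \eqref{Psi properties}, one has $\Psi_{\gamma,\varepsilon}\leq\Psi$ pointwise. The already-verified second line for $\Psi$ therefore forces
\begin{equation*}
\sum_{r\geq 1}\Psi_{\gamma,\varepsilon}(I_r,I_0)\leq\sum_{r\geq 1}\Psi(I_r,I_0)\leq\mathcal{E}_\epsilon^2\,\sigma(I_0),
\end{equation*}
which is the Energy Hypothesis \eqref{smallest condition}. The principal obstacle is bookkeeping the exponents in the third property: the Poisson smoothing from Lemma~\ref{Poisson inequality} saves a factor $(|J_r|/|I_0|)^{2-2\varepsilon}$ inside every term, whereas Lemma~\ref{l.monotone} only releases $\sup_r(|J_r|/|I_0|)^{2-2\varepsilon-\epsilon}$ from the energy sum, so positivity of the resulting $\gamma$ is exactly the quantitative constraint $\varepsilon<1-\epsilon/2$ that appears in the hypothesis of Lemma~\ref{l.monotone}.
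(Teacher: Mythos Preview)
Your proof is correct and follows essentially the same route as the paper: identify the first two lines of \eqref{Psi properties} with the $A_2$ and Hybrid conditions respectively, and derive the third line by combining the Poisson inequality (Lemma~\ref{Poisson inequality}) with the monotonicity of energy (Lemma~\ref{l.monotone}). One small wording slip: the distance hypothesis of Lemma~\ref{Poisson inequality} is fulfilled because $\{J_r\}$ is an $\varepsilon$-good subpartition of $I_0$ (i.e.\ \eqref{good property} holds), not because the measure is supported off $I_0$; the latter is already built into the conclusion of that lemma.
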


\begin{proof}
The first line in \eqref{Psi properties} is the usual $A_{2}$ condition, and
the second line is the Hybrid Condition \eqref{Hy Con} with $\epsilon
=\gamma $. Thus we must show the third line:%
\begin{equation*}
\sum_{r\geq 1}\Phi \left( J_{r},\widehat{I}\setminus I_{0}\right) \leq
\sup_{r\geq 1}\left( \frac{\left\vert J_{r}\right\vert }{\left\vert
I_{0}\right\vert }\right) ^{\gamma }\Psi \left( I_{0},\widehat{I}\setminus
I_{0}\right) ,
\end{equation*}%
for all $\varepsilon $-good subparitions $\left\{ J_{r}\right\} $ of $I_{0}$%
, i.e. those satisfying \eqref{good property}. From Lemma \ref{Poisson
inequality} we have%
\begin{equation*}
\lvert J_{r}\rvert ^{2\varepsilon -2}\mathsf{P}(J_{r},\sigma \mathbf{1}_{%
\widehat{I}\setminus I_{0}})^{2}\lesssim \lvert I_{0}\rvert ^{2\varepsilon
-2}\mathsf{P}(I_{0},\sigma \mathbf{1}_{\widehat{I}\setminus I_{0}})^{2}.
\end{equation*}%
Now use Lemma \ref{l.monotone} and $\mathsf{E}(J_{r},\omega )^{2}\leq 
\mathsf{E}(J_{r},\omega )^{\epsilon }$ to obtain%
\begin{eqnarray*}
\sum_{r\geq 1}\Phi \left( J_{r},\widehat{I}\setminus I_{0}\right)
&=&\sum_{r\geq 1}\omega (J_{r})\lvert J_{r}\rvert ^{2-2\varepsilon }\mathsf{E%
}(J_{r},\omega )^{2}\lvert J_{r}\rvert ^{2\varepsilon -2}\mathsf{P}%
(J_{r},\sigma \mathbf{1}_{\widehat{I}\setminus I_{0}})^{2} \\
&\lesssim &\sum_{r\geq 1}\omega (J_{r})\lvert J_{r}\rvert ^{2-2\varepsilon }%
\mathsf{E}(J_{r},\omega )^{\epsilon }\lvert I_{0}\rvert ^{2\varepsilon -2}%
\mathsf{P}(I_{0},\sigma \mathbf{1}_{\widehat{I}\setminus I_{0}})^{2} \\
&\leq &\sup_{r\geq 1}\left( \frac{\lvert J_{r}\rvert }{\lvert I_{0}\rvert }%
\right) ^{2-2\varepsilon -\epsilon }\times \omega (I_{0})\mathsf{E}%
(I_{0};\omega )^{\epsilon }\mathsf{P}(I_{0},\sigma \mathbf{1}_{\widehat{I}%
\setminus I_{0}})^{2} \\
&=&\sup_{r\geq 1}\left( \frac{\left\vert J_{r}\right\vert }{\left\vert
I_{0}\right\vert }\right) ^{\gamma }\Psi \left( I_{0},\widehat{I}\setminus
I_{0}\right) .
\end{eqnarray*}
\end{proof}

\section{Grids, Haar Function, Carleson Embedding}

This section collects some standard facts which can be found e.g. in \cite%
{Vol}. We call a collection of intervals $\mathcal{G}$ a \emph{grid} iff for
all $I,J\in \mathcal{G}$ we have $I\cap J\in \{\emptyset ,I,J\}$. An
interval $I\in \mathcal{G}$ may have a \emph{parent} $I^{(1)}$: The unique
minimal interval $J\in \mathcal{G}$ that strictly contains $I$. Recursively
define $I^{(j+1)}=(I^{(j)})^{(1)}$. In the analysis of the paper, it will be
necessary to distinguish the grid in question when passing to a parent. We
accordingly set 
\begin{equation}
\pi _{\mathcal{G}}^{1}(I)\equiv \text{The unique minimal interval }J\in 
\mathcal{G}\text{ that strictly contains }I.  \label{e.parent}
\end{equation}%
Recursively set $\pi _{\mathcal{G}}^{j+1}(I)=\pi _{\mathcal{G}}^{1}(\pi _{%
\mathcal{G}}^{j}(I))$. Note that the definition of $\pi _{\mathcal{G}%
}^{1}(I) $ makes sense even if $I\not\in \mathcal{G}$.

A grid $\mathcal{D}$ is \emph{dyadic} if each interval $I\in \mathcal{D}$ is
union of $I _{-}, I _{+} \in \mathcal{D}$, with $I _{-}$ being the left-half
of $I$, and likewise for $I _{+}$. We will refer to $I _{\pm}$ as the \emph{%
children} of $I$.

\medskip

A dyadic grid $\mathcal{D}$, with weight $\sigma $ admits the \emph{Haar
basis adapted to }$\sigma $\emph{\ and $\mathcal{D}$}. This basis is
especially nice if the weight $\sigma $ does not assign positive mass to any
endpoint of an interval in $\mathcal{D}$. This can be achieved by
e.\thinspace g.\thinspace a joint translation of the intervals in $\mathcal{D%
}$, and so it will be a standing assumption.

The Haar basis $\{h_{I}^{\sigma }\;:\;I\in \mathcal{D}\}$ is explicitly
defined to be 
\begin{equation}
h_{I}^{\sigma }\equiv \frac{-\sigma (I_{+})\mathbf{1}_{I_{-}}+\sigma (I_{-})%
\mathbf{1}_{I_{+}}}{[\sigma (I_{+})^{2}\sigma (I_{-})+\sigma
(I_{-})^{2}\sigma (I_{+})]^{1/2}}=\sqrt{\frac{\sigma (I_{-})\sigma (I_{+})}{%
\sigma (I)}}\left( -\frac{\mathbf{1}_{I_{-}}}{\sigma (I_{-})}+\frac{\mathbf{1%
}_{I_{+}}}{\sigma (I_{+})}\right) ,  \label{e.hmu}
\end{equation}%
with the convention that $h_{I}^{\sigma }\equiv 0$ if the restriction of $%
\sigma $ to either child $I_{-}$ or $I_{+}$ vanishes. The martingale
properties of the Haar function are decisive, still at a couple of points,
we have recourse to the formula 
\begin{equation}
\left\vert \mathbb{E}_{I_{\theta }}^{\sigma }h_{I}^{\sigma }\right\vert =%
\sqrt{\frac{\sigma (I_{-\theta })}{\sigma (I)\sigma (I_{\theta })}}\leq 
\sqrt{\frac{1}{\sigma (I_{\theta })}}.  \label{e.Eh}
\end{equation}

These functions are (1) pairwise orthogonal, (2) have $\sigma $-integral
zero, (3) have $L^{2}(\sigma )$-norm either $0$ or $1$, and (4) form a basis
for $L^{2}(\sigma )$. We also define 
\begin{equation}
\Delta _{I}^{\sigma }f\equiv \left\langle f,h_{I}^{\sigma }\right\rangle
_{\sigma }\cdot h_{I}^{\sigma }  \label{e.Delta}
\end{equation}%
where by $\left\langle \cdot ,\cdot \right\rangle _{\sigma }$ we mean the
natural inner product on $L^{2}(\sigma )$. We then have the $L^{2}(\sigma )$
identity 
\begin{equation}
f=\sum_{I\in \mathcal{D}}\Delta _{I}^{\sigma }f\,,  \label{e.DeltaF}
\end{equation}%
for all $f\in L^{2}\left( \sigma \right) $ that are supported in a dyadic
interval $I^{0}$ and satisfy $\int_{I^{0}}fd\sigma =0$. We remark that by a
simple reduction in (17.3) of \cite{Vol}, we only need \eqref{e.DeltaF} for
such $f$ in the proof of our theorem.

Note that \eqref{e.DeltaF} yields the Plancherel formula%
\begin{equation}
\left\Vert f\right\Vert _{L^{2}\left( \sigma \right) }^{2}=\sum_{I\in 
\mathcal{D}}\left\vert \left\langle f,h_{I}^{\sigma }\right\rangle _{\sigma
}\right\vert ^{2},\ \ \ \ \ f\in L^{2}\left( \sigma \right) ,supp\ f\subset
I^{0},\int_{I^{0}}fd\sigma =0.  \label{Plancherel}
\end{equation}%
The following simple identities are basic as well. We have 
\begin{equation}
\Delta _{I}^{\sigma }f=\left\{ \mathbf{1}_{I_{-}}\mathbb{E}_{I_{-}}^{\sigma
}f+\mathbf{1}_{I_{+}}\mathbb{E}_{I_{+}}^{\sigma }f\right\} -\mathbf{1}_{I}%
\mathbb{E}_{I}^{\sigma }f\,.  \label{e.mart1}
\end{equation}%
Consequently, for two intervals $I_{1}\subset I_{2}$, $I_{1},I_{2}\in 
\mathcal{D}$, the sum below is telescoping, so easily summable: 
\begin{equation}
\sum_{I\;:\;I_{1}\subsetneq J\subset I_{2}}\Delta _{J}^{\sigma }f\left(
x\right) =\mathbb{E}_{I_{1}}^{\sigma }f-\mathbb{E}_{I_{2}}^{\sigma }f\,,\ \
\ \ \ x\in I_{1}.  \label{e.mart2}
\end{equation}%
In these displays, we are using the notation 
\begin{equation}
\mathbb{E}_{I}^{\sigma }\phi \equiv \sigma (I)^{-1}\int_{I}\phi \;\sigma
\left( {dx}\right) \,,  \label{e.expect}
\end{equation}%
thus, $\mathbb{E}_{I}^{\sigma }f$ is the average value of $f$ with respect
to the weight $\sigma $ on interval $I$. \medskip

We turn to a brief description of paraproducts. The familiar Carleson
Embedding Theorem is fundamental for the proof. The proof follows classical
lines, using that the map $f\rightarrow \mathbb{E}_{I}^{\sigma }f$ is type $%
\left( \infty ,\infty \right) $ and also weak type $\left( 1,1\right) $ with
respect to the measure $\sum_{I\in \mathcal{D}}a_{I}\delta _{I}$ on $%
\mathcal{D}$ by the Carleson condition \eqref{e.embed2}.

\begin{theorem}
\label{carleson embedding}Fix a weight $\sigma $ and consider nonnegative
constants $\left\{ a_{I}:I\in \mathcal{D}\right\} $. The following two
inequalities are equivalent:%
\begin{equation}
\sum_{I\in \mathcal{D}}a_{I}\left\vert \mathbb{E}_{I}^{\sigma }f\right\vert
^{2}\leq C _{1}\left\Vert f\right\Vert _{L^{2}\left( \sigma \right) }^{2},
\label{e.embed1}
\end{equation}%
\begin{equation}
\sum_{I\in \mathcal{D}:I\subset J}a_{I}\leq C_{2}\sigma \left( J\right) ,\ \
\ \ \ J\in \mathcal{D}.  \label{e.embed2}
\end{equation}%
Taking $C_{1}$ and $C_{2}$ to be the best constants in these inequalities,
we have $C_{1}\approx C_{2}$ with the implied constant independent of $%
\sigma $.
\end{theorem}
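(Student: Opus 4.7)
The plan is to follow the standard route indicated just before the statement: view the assignment $T \colon f \mapsto (\mathbb{E}_I^\sigma f)_{I \in \mathcal{D}}$ as a sublinear operator from $L^p(\sigma)$ to $L^p(\mu)$, where $\mu$ is the discrete measure $\mu \equiv \sum_{I \in \mathcal{D}} a_I \delta_I$ on the countable set $\mathcal{D}$. With this notation, inequality \eqref{e.embed1} is precisely the $L^2(\sigma)\to L^2(\mu)$ boundedness of $T$, so the heart of the argument is a Marcinkiewicz interpolation between a trivial $L^\infty$ bound and a weak-$L^1$ bound that directly encodes the Carleson condition \eqref{e.embed2}.

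The necessity \eqref{e.embed1}$\Rightarrow$\eqref{e.embed2} is immediate. Fix $J \in \mathcal{D}$ and apply \eqref{e.embed1} to $f = \mathbf{1}_J$; for every $I \subset J$ we have $\mathbb{E}_I^\sigma f = 1$, so
\begin{equation*}
\sum_{I \subset J} a_I \leq \sum_{I \in \mathcal{D}} a_I |\mathbb{E}_I^\sigma f|^2 \leq C_1 \|f\|_{L^2(\sigma)}^2 = C_1 \sigma(J),
\end{equation*}
which is \eqref{e.embed2} with $C_2 \leq C_1$.

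For the sufficiency, the $L^\infty$ bound $\sup_I |\mathbb{E}_I^\sigma f| \leq \|f\|_{L^\infty(\sigma)}$ is trivial. The main step is the weak-type $(1,1)$ inequality. Given $f \in L^1(\sigma)$ with $f \geq 0$ and $\lambda > 0$, let $\{I_j\}$ be the maximal dyadic intervals in $\mathcal{D}$ satisfying $\mathbb{E}_{I_j}^\sigma f > \lambda$. These are pairwise disjoint, every $I$ with $\mathbb{E}_I^\sigma f > \lambda$ sits inside some $I_j$, and by maximality $\sigma(I_j) \leq \lambda^{-1} \int_{I_j} f \, d\sigma$. The Carleson hypothesis \eqref{e.embed2} then yields
\begin{equation*}
\mu\{I \in \mathcal{D} : \mathbb{E}_I^\sigma f > \lambda\} \leq \sum_j \sum_{I \subset I_j} a_I \leq C_2 \sum_j \sigma(I_j) \leq \frac{C_2}{\lambda} \|f\|_{L^1(\sigma)}.
\end{equation*}
The Marcinkiewicz interpolation theorem applied to these two endpoint bounds delivers $\|Tf\|_{L^2(\mu)}^2 \lesssim C_2 \|f\|_{L^2(\sigma)}^2$, which is \eqref{e.embed1} with $C_1 \lesssim C_2$, and the implied constant is absolute (it depends only on the Marcinkiewicz constants, not on $\sigma$).

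The only technical nuisance is ensuring that the maximal intervals $\{I_j\}$ exist as honest dyadic subintervals of $\mathbb{R}$. This is handled by a standard exhaustion: first restrict attention to $f$ supported in a large dyadic interval $J^0 \in \mathcal{D}$ and replace the sum in \eqref{e.embed1} by $\sum_{I \subset J^0}$ (so that all relevant intervals lie in the finite chain of ancestors of $\operatorname{supp} f$ within $J^0$, and maximal elements exist). The bound obtained is uniform in $J^0$, and letting $J^0 \uparrow \mathbb{R}$ through any tower of dyadic intervals, monotone convergence yields the full statement. I do not anticipate any further obstacles; the main content is the weak-$(1,1)$ estimate, and its proof is a direct transcription of the Calder\'on--Zygmund stopping-time argument adapted to the measure $\mu$ on $\mathcal{D}$.
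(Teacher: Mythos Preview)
Your proof is correct and follows exactly the approach the paper itself indicates: the paper does not give a detailed proof but simply remarks that ``the proof follows classical lines, using that the map $f\rightarrow \mathbb{E}_{I}^{\sigma }f$ is type $(\infty,\infty)$ and also weak type $(1,1)$ with respect to the measure $\sum_{I\in \mathcal{D}}a_{I}\delta _{I}$ on $\mathcal{D}$ by the Carleson condition,'' which is precisely your argument via Marcinkiewicz interpolation.
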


There is another language commonly associated with the Carleson Embedding
Theorem. For the purposes of this discussion, let $\widehat{I}=I\times
\lbrack 0,\lvert I\rvert ]$ be the square in the upper half-plane $\mathbb{R}%
_{+}^{2}$ with face $I$ on the real line, viewed as the boundary of $\mathbb{%
R}_{+}^{2}$. This is called the \emph{box over $I$}. And consider the linear
map, a $\sigma $-weighted analog of the Poisson integral, 
\begin{equation*}
Af(x,t)\equiv \mathbb{E}_{(x-t/2,x+t/2)}^{\sigma }f\,.
\end{equation*}%
Given a measure $\mu $ on $\mathbb{R}_{+}^{2}$, this operator maps $L^{2}(%
\mathbb{R},\sigma )$ into $L^{2}(\mathbb{R}_{+}^{2},\mu )$ if and only if
the measure $\mu $ satisfies the \emph{Carleson measure condition} 
\begin{equation}
\mu (\widehat{I})\lesssim \sigma (I)\,,\qquad  \label{e.box}
\end{equation}%
The condition \eqref{e.embed2} above is a discrete analog of this condition,
with the measure $\mu $ being defined by a sum of Dirac point masses at the
center of the tops of the boxes over $I$: 
\begin{equation}
\mu \equiv \sum_{I\in \mathcal{D}}a_{I}\delta _{(c_{I},\lvert I\rvert )}\,.
\end{equation}

In seeking to verify the Carleson measure condition \eqref{e.embed2}, there
is a store of common reductions. A very simple one is that it suffices to
test \eqref{e.embed2} for dyadic intervals $J$ for which $a_J \neq 0$.

A slightly more complicated one is this: Let $\mathcal{S}\subset \mathcal{D}$
be such that we have the estimate 
\begin{equation}
\sum_{S\in \mathcal{S}:S\subset S_{0}}\mu (S)\lesssim C_{1}\sigma
(S_{0})\,,\qquad S_{0}\in \mathcal{S}\,.  \label{e.wS}
\end{equation}%
That is, the measure $\sigma $ has the Carleson measure property, provided
one only sums intervals in $\mathcal{S}$. Now, suppose that $\mu $ is a
measure on $\mathbb{R}_{+}^{2}$ such that for any $S_{0}\in \mathcal{S}$, we
have 
\begin{equation}
\mu \left( \widehat{S_{0}}\backslash \bigcup_{S\in \mathcal{S}:S\subsetneq
S_{0}}\widehat{S}\right) \lesssim C_{2}\sigma (S_{0})\,.  \label{e.wS1}
\end{equation}%
Here, we have the box over $S_{0}$, and we remove the smaller boxes. Then,
we have $\mu (\widehat{S})\leq \left( C_{1}+C_{2}\right) \sigma (S)$ for all 
$S\in \mathcal{S}$. We shall implicitly use this reduction.

\medskip In the two weight setting, a paraproduct would be, for example, an
operator of the form 
\begin{equation}
Tf=\sum_{I\in \mathcal{D}}\alpha _{I}\mathbb{E}_{I}^{\sigma }f\cdot
h_{I}^{\omega }\,.  \label{e.para}
\end{equation}%
By the orthogonality of the Haar system, it follows that $T$ maps $%
L^{2}(\sigma )$ into $L^{2}(\omega )$ if and only if the sequence of square
coefficients $\{\alpha _{I}^{2}\;:\;I\in \mathcal{D}\}$ satisfies the
condition \eqref{e.embed2}. This is the type of argument we will be
appealing to below.

\section{The Good-Bad Decomposition} \label{s.good}

Here we follow the random grid idea of Nazarov, Treil and Volberg as set out
for example in Chapter 17 of \cite{Vol}. The first step in the proof is to
obtain two grids, one for each weight, that work well with each other. There
are in fact many dyadic grids in $\mathbb{R}$. For any $\beta =\{\beta
_{l}\}\in \{0,1\}^{\mathbb{Z}}$, define the dyadic grid ${\mathbb{D}}_{\beta
}$ to be the collection of intervals 
\begin{equation*}
{\mathbb{D}}_{\beta }=\left\{ 2^{n}\left( [0,1)+k+\sum_{i<n}2^{i-n}\beta
_{i}\right) \right\} _{n\in {\mathbb{Z}},\,k\in {\mathbb{Z}}}
\end{equation*}%
This parametrization of dyadic grids appears explicitly in \cite{MR2464252},
and implicitly in \cite{MR1998349} {section 9.1}. Place the usual uniform
probability measure $\mathbb{P}$ on the space $\{0,1\}^{\mathbb{Z}}$,
explicitly 
\begin{equation*}
\mathbb{P}(\beta :\beta _{l}=0)=\mathbb{P}(\beta :\beta _{l}=1)=\frac{1}{2}%
,\qquad \text{for all }l\in \mathbb{Z},
\end{equation*}%
and then extend by independence of the $\beta _{l}$. Note that the \emph{%
endpoints} and \emph{centers} of the intervals in the grid ${\mathbb{D}}%
_{\beta }$ are contained in $\mathbb{Q}^{dy}+x_{\beta }$, the dyadic
rationals $\mathbb{Q}^{dy}\equiv \left\{ \frac{m}{2^{n}}\right\} _{m,n\in 
\mathbb{Z}}$ translated by $x_{\beta }\equiv \sum_{i<0}2^{i}\beta _{i}\in %
\left[ 0,1\right] $. Moreover the pushforward of the probability measure $%
\mathbb{P}$ under the map $\beta \rightarrow x_{\beta }$ is Lebesgue measure
on $\left[ 0,1\right] $. The locally finite weights $\omega ,\sigma $ have
at most countably many point masses, and it follows with probability one
that $\omega ,\sigma $ do \emph{not} charge an endpoint or center of any
interval in ${\mathbb{D}}_{\beta }$.

For a weight $\omega $, we consider a random choice of dyadic grid $\mathcal{%
D}^{\omega }$ on the probability space $\Sigma ^{\omega }$, and likewise for
second weight $\sigma $, with a random choice of dyadic grid $\mathcal{D}%
^{\sigma }$ on the probability space $\Sigma ^{\sigma }$.

\begin{notation}
We fix $\varepsilon >0$ for use throughout the remainder of the paper.
\end{notation}

\begin{definition}
\label{d.good} For a positive integer $r$, an interval $J\in \mathcal{D}%
^{\sigma }$ is said to be \emph{\ $r$-bad} if there is an interval $I\in 
\mathcal{D}^{\omega }$ with $\lvert I\rvert \geq 2^{r}\lvert I\rvert $, and 
\begin{equation*}
\operatorname{dist}(e(I),J)\leq \tfrac{1}{2}\lvert J\rvert ^{\varepsilon }\lvert
I\rvert ^{1-\varepsilon }\,.
\end{equation*}%
Here, $e(J)$ is the set of three points consisting of the two endpoints of $%
J $ and its center. (This is the set of discontinuities of $h_{J}^{\sigma }$%
.) Otherwise, $J$ is said to be \emph{$r$-good}. We symmetrically define $%
J\in \mathcal{D}^{\omega }$ to be \emph{$r$-good}.
\end{definition}

The basic proposition here is:

\begin{proposition}
Fix a grid $\mathcal{D}^{\omega }$ and $J\in \mathcal{D}^{\omega }$. Then $%
\mathbb{P}\left( J\text{ is }r\text{-bad}\right) \leq C2^{-\varepsilon r}$.
\end{proposition}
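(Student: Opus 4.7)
The plan is to decompose the event $\{J \text{ is } r\text{-bad}\}$ according to the scale of the offending interval $I\in\mathcal{D}^{\sigma}$, and then estimate each piece using the uniform distribution of the random translation of $\mathcal{D}^{\sigma}$. Since $\mathcal{D}^{\omega}$ and $J\in\mathcal{D}^{\omega}$ are fixed, the only randomness is $\beta\in\{0,1\}^{\mathbb{Z}}$ parametrizing $\mathcal{D}^{\sigma}=\mathbb{D}_{\beta}$. For each integer $r'\geq r$ I set
\[
A_{r'} \equiv \bigl\{\exists\, I\in\mathcal{D}^{\sigma},\ \lvert I\rvert = 2^{r'}\lvert J\rvert,\ \operatorname{dist}(e(I),J)\leq \tfrac{1}{2}\lvert J\rvert^{\varepsilon}\lvert I\rvert^{1-\varepsilon}\bigr\},
\]
so that $\{J\text{ is }r\text{-bad}\}=\bigcup_{r'\geq r}A_{r'}$, and the problem is reduced to summing $\mathbb{P}(A_{r'})$ over $r'\geq r$.

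For each fixed $r'\geq r$ I would then bound $\mathbb{P}(A_{r'})$ as follows. The set of points whose distance to $J$ is at most $\tfrac12\lvert J\rvert^{\varepsilon}(2^{r'}\lvert J\rvert)^{1-\varepsilon}$ is a deterministic interval $K_{r'}$ of length at most $\lvert J\rvert+\lvert J\rvert^{\varepsilon}(2^{r'}\lvert J\rvert)^{1-\varepsilon}\leq C\cdot 2^{r'(1-\varepsilon)}\lvert J\rvert$. The set of endpoints and midpoints of the intervals of $\mathbb{D}_{\beta}$ at scale $2^{r'}\lvert J\rvert$ is an arithmetic progression of common spacing $\tfrac12\cdot 2^{r'}\lvert J\rvert$, translated by a single shift determined by $\beta$. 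By the pushforward statement recalled in the excerpt (that $\beta\mapsto x_{\beta}$ has Lebesgue measure as its image) together with independence of the coordinates $\beta_i$, this shift, taken modulo $2^{r'}\lvert J\rvert$, is Lebesgue-uniform on $[0,2^{r'}\lvert J\rvert)$. Hence
\[
\mathbb{P}(A_{r'})\ \leq\ \frac{\lvert K_{r'}\rvert}{2^{r'-1}\lvert J\rvert}\ \lesssim\ 2^{-\varepsilon r'},
\]
where $\varepsilon>0$ is what ensures this ratio is strictly less than $1$ and the estimate is nontrivial.

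Summing the geometric series gives
\[
\mathbb{P}(J\text{ is }r\text{-bad})\ \leq\ \sum_{r'\geq r}\mathbb{P}(A_{r'})\ \lesssim\ \sum_{r'\geq r}2^{-\varepsilon r'}\ \lesssim\ 2^{-\varepsilon r},
\]
which is the claim. The only substantive point to verify is the uniformity of the shift modulo the scale: this reduces to showing that $\sum_{0\leq i<k}2^{i}\beta_i$ is discrete uniform on $\{0,\ldots,2^{k}-1\}$ and independent of $x_{\beta}=\sum_{i<0}2^{i}\beta_{i}$, after which the sum is uniform on $[0,2^{k})$. Once this per-scale estimate is in hand, the remainder is bookkeeping (the identification of the bad region as an interval, the dyadic decomposition by scale, and the geometric sum) that relies crucially on $\varepsilon>0$.
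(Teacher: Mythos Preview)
Your proof is correct but takes a different route from the paper's. The paper argues via the tree structure of the random grid: it fixes the interval $I\in\mathcal{D}^{\sigma}$ of the same length as $J$ that overlaps $J$, sets $s=\lfloor (1-\varepsilon)(r+k)\rfloor$, and observes that for the ancestor $\pi_{\mathcal{D}^{\sigma}}^{r+k}I$ to satisfy the bad-distance condition, all of the intermediate ancestors $\pi_{\mathcal{D}^{\sigma}}^{s}I,\dots,\pi_{\mathcal{D}^{\sigma}}^{r+k}I$ must share a common endpoint. Since each ancestor is independently the left or right half of its parent, this forces roughly $\varepsilon(r+k)$ coin flips to come out the same way, giving probability $\lesssim 2^{-\varepsilon(r+k)}$, and then one sums over $k\ge 0$. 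Your argument instead exploits the global fact that the shift of $\mathcal{D}^{\sigma}$ at scale $2^{r'}\lvert J\rvert$ is Lebesgue-uniform, reducing the question to the probability that a uniformly translated arithmetic progression of spacing $2^{r'-1}\lvert J\rvert$ meets a fixed interval of length $\lesssim 2^{r'(1-\varepsilon)}\lvert J\rvert$. Your approach is more direct and avoids the combinatorial ``common endpoint'' step; the paper's approach has the mild advantage of using only the local coin-flip independence (so it would transfer to random grid models where the global translate is not exactly uniform), but for the construction at hand both arguments are equally valid and yield the same bound.
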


\begin{proof}
Let $I\in \mathcal{D}^{\sigma }$ with the same length as $J$ and $\lvert
I\cap J\rvert \geq \tfrac{1}{2}\lvert J\rvert $. Let $s=\lfloor \left(
1-\varepsilon \right) r\rfloor $ and consider the $s$-fold ancestor $\pi _{%
\mathcal{D}^{\sigma }}^{s}I$ of $I$ in the grid $\mathcal{D}^{\sigma }$. We
have%
\begin{equation*}
\operatorname{dist}(e\left( \pi _{\mathcal{D}^{\sigma }}^{s}I\right) ,J)\leq
2^{s}\lvert J\rvert \leq \lvert J\rvert ^{\varepsilon }\lvert \pi _{\mathcal{%
D}^{\sigma }}^{s}I\rvert ^{1-\varepsilon }\,.
\end{equation*}%
In order that%
\begin{equation}
\operatorname{dist}(e(\pi _{\mathcal{D}^{\sigma }}^{r}I),J)\leq \tfrac{1}{2}\lvert
J\rvert ^{\varepsilon }\lvert \pi _{\mathcal{D}^{\sigma }}^{r}I\rvert
^{1-\varepsilon },  \label{exact r bad}
\end{equation}%
it would then be required that all of the further ancestors of $I$ up to $%
\pi _{\mathcal{D}^{\sigma }}^{r}I$, namely $\pi _{\mathcal{D}^{\sigma
}}^{s+1}I,\dotsc ,\pi _{\mathcal{D}^{\sigma }}^{r}I$, \emph{must share a
common endpoint.} Indeed, if not there is $1\leq \ell \leq r-s$ such that%
\begin{eqnarray*}
\operatorname{dist}(e(\pi _{\mathcal{D}^{\sigma }}^{r}I),J) &\geq &\operatorname{dist}(e(\pi
_{\mathcal{D}^{\sigma }}^{s+\ell }I),J)\geq \frac{1}{2}\left\vert \pi _{%
\mathcal{D}^{\sigma }}^{s+\ell }I\right\vert =2^{s+\ell -1}\left\vert
I\right\vert \\
&\geq &2^{s+1}\left\vert I\right\vert >\left\vert J\right\vert ^{\varepsilon
}2^{\left( 1-\varepsilon \right) r}\left\vert I\right\vert ^{1-\varepsilon
}\geq \left\vert J\right\vert ^{\varepsilon }\left\vert \pi _{\mathcal{D}%
^{\sigma }}^{r}I\right\vert ^{1-\varepsilon }.
\end{eqnarray*}%
The essential point about the random construction of the grids used here is
that for any interval $K$, $K$ is equally likely to be the left or right
half of its parent, and the selection of parents is done independently. But
sharing a common endpoint means that $\pi _{\mathcal{D}^{\sigma }}^{t}I$ has
to be the left-half, say, of $\pi _{\mathcal{D}^{\sigma }}^{t+1}I$, for all $%
t=s,\dotsc ,r-1$. So the probability that \eqref{exact r bad} holds is at
most $2^{-r+s+2}\leq 2^{-\varepsilon r+2}$. Now by definition, $J$ is $r$%
-bad if at least \emph{one} of the ancestors $\left\{ \pi _{\mathcal{D}%
^{\sigma }}^{r+k}I\right\} _{k=0}^{\infty }$ at or beyond $\pi _{\mathcal{D}%
^{\sigma }}^{r}I$ satisfies 
\begin{equation}
\operatorname{dist}(e(\pi _{\mathcal{D}^{\sigma }}^{r+k}I),J)\leq \tfrac{1}{2}\lvert
J\rvert ^{\varepsilon }\lvert \pi _{\mathcal{D}^{\sigma }}^{r+k}I\rvert
^{1-\varepsilon }.  \label{exact r+k bad}
\end{equation}%
The argument above shows that the probability that \eqref{exact r+k bad}
holds is at most $2^{-\varepsilon \left( r+k\right) +2}$. It follows that
the probability that $J$ is $r$-bad is at most%
\begin{equation*}
\sum_{k=0}^{\infty }2^{-\varepsilon \left( r+k\right) +2}=2^{-\varepsilon
r+2}\sum_{k=0}^{\infty }2^{-\varepsilon k}=\frac{2^{-\varepsilon r+2}}{%
1-2^{-\varepsilon }}\leq C_{\varepsilon }2^{-\varepsilon r}.
\end{equation*}%
which proves the Proposition.
\end{proof}

We restate the previous Proposition in a new setting. Let $\mathcal{D}%
^{\sigma }$ be randomly selected, with parameter $\beta $, and $\mathcal{D}%
^{\omega }$ with parameter $\beta ^{\prime }$. Define a projection 
\begin{equation}
\mathsf{P}_{\textup{good}}^{\sigma }f\equiv \sum_{I\text{ is }r\text{-good }%
\in \mathcal{D}^{\sigma }}\Delta _{I}^{\sigma }f\,,  \label{e.Pgood}
\end{equation}%
and likewise for $\mathsf{P}_{\textup{good}}^{\omega }\phi $. We define $%
\mathsf{P}_{\textup{bad}}^{\sigma }f\equiv f-\mathsf{P}_{\textup{good}%
}^{\sigma }f$. The basic Proposition is:

\begin{proposition}
\label{p.Pgood}(Theorem 17.1 in \cite{Vol}) We have the estimates 
\begin{equation*}
\mathbb{E}_{\beta ^{\prime }}\left\Vert \mathsf{P}_{\textup{bad}}^{\sigma
}f\right\Vert _{L^{2}(\sigma )}\leq C2^{-\frac{\varepsilon r}{2}}\left\Vert
f\right\Vert _{L^{2}(\sigma )}.
\end{equation*}%
and likewise for $\mathsf{P}_{\textup{bad}}^{\omega }\phi $.
\end{proposition}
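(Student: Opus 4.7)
The plan is to reduce the claim to the previous proposition by Plancherel, linearity of expectation, and Jensen's inequality. The key observation is that the grid $\mathcal{D}^{\sigma}$ is held fixed (determined by $\beta$), while the randomness over $\beta'$ enters only through the $r$-badness property of each $J\in \mathcal{D}^{\sigma}$, which depends on the grid $\mathcal{D}^{\omega}$.

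First I would apply the Plancherel identity \eqref{Plancherel} for the Haar basis in $L^{2}(\sigma)$ to the function $\mathsf{P}_{\textup{bad}}^{\sigma}f = \sum_{J\in\mathcal{D}^{\sigma},\;J\text{ is }r\text{-bad}}\Delta_J^{\sigma}f$, obtaining
\begin{equation*}
\bigl\Vert \mathsf{P}_{\textup{bad}}^{\sigma}f\bigr\Vert_{L^{2}(\sigma)}^{2}
= \sum_{J\in\mathcal{D}^{\sigma}} \mathbf{1}_{\{J\text{ is }r\text{-bad}\}}\,\bigl\lvert\langle f,h_J^{\sigma}\rangle_{\sigma}\bigr\rvert^{2}.
\end{equation*}
Note that $\langle f,h_J^{\sigma}\rangle_{\sigma}$ is deterministic in $\beta'$ (it depends only on the fixed grid $\mathcal{D}^{\sigma}$), whereas $\mathbf{1}_{\{J\text{ is }r\text{-bad}\}}$ is the only random quantity.

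Next I would take the expectation over $\beta'$ and use Tonelli together with the previous proposition, which says that for each individual $J\in\mathcal{D}^{\sigma}$,
\begin{equation*}
\mathbb{P}_{\beta'}\bigl(J \text{ is } r\text{-bad}\bigr) \leq C\,2^{-\varepsilon r}.
\end{equation*}
This yields
\begin{equation*}
\mathbb{E}_{\beta'}\bigl\Vert \mathsf{P}_{\textup{bad}}^{\sigma}f\bigr\Vert_{L^{2}(\sigma)}^{2}
= \sum_{J\in\mathcal{D}^{\sigma}} \mathbb{P}_{\beta'}(J\text{ is }r\text{-bad})\,\bigl\lvert\langle f,h_J^{\sigma}\rangle_{\sigma}\bigr\rvert^{2}
\leq C\,2^{-\varepsilon r}\,\Vert f\Vert_{L^{2}(\sigma)}^{2},
\end{equation*}
again by Plancherel.

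Finally I would apply Jensen's (or Cauchy--Schwarz) inequality $\mathbb{E}_{\beta'}\Vert \cdot\Vert \leq (\mathbb{E}_{\beta'}\Vert\cdot\Vert^{2})^{1/2}$ to pass from the $L^{2}$-expectation bound to the $L^{1}$-expectation bound, obtaining the claimed factor $2^{-\varepsilon r/2}$. The argument for $\mathsf{P}_{\textup{bad}}^{\omega}\phi$ is symmetric. I do not anticipate a real obstacle here: the substance of the proposition is encapsulated in the probabilistic estimate already established, and Plancherel converts the pointwise probability bound into the operator-norm bound essentially for free.
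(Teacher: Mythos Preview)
Your proposal is correct and follows essentially the same approach as the paper: expand $\Vert \mathsf{P}_{\textup{bad}}^{\sigma}f\Vert_{L^{2}(\sigma)}^{2}$ via Plancherel, take $\mathbb{E}_{\beta'}$ and apply the probability bound from the previous proposition termwise, then use Jensen to pass to the $L^{1}$ expectation. The paper's proof is in fact terser and only writes out the bound on $\mathbb{E}_{\beta'}\Vert\cdot\Vert^{2}$, leaving the final Jensen step implicit; your write-up is slightly more complete in this respect.
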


\begin{proof}
We have 
\begin{align*}
\mathbb{E}_{\beta ^{\prime }}\left\Vert \mathsf{P}_{\textup{bad}}^{\sigma
}f\right\Vert _{L^{2}(\sigma )}^{2}& =\mathbb{E}_{\beta ^{\prime }}\sum_{I%
\text{ is }r\text{-bad}}\left\langle f,h_{I}^{\sigma }\right\rangle ^{2} \\
& \leq C2^{-\varepsilon r}\sum_{I}\left\langle f,h_{I}^{\sigma
}\right\rangle ^{2}=C2^{-\varepsilon r}\left\Vert f\right\Vert
_{L^{2}(\sigma )}^{2}\,.
\end{align*}
\end{proof}

From this we conclude the following: There is an absolute choice of $r$ so
that the following holds. Let $T\;:\;L^{2}(\sigma )\rightarrow L^{2}(\omega
) $ be a bounded linear operator. We then have 
\begin{equation}
\left\Vert T\right\Vert _{L^{2}(\sigma )\rightarrow L^{2}(\omega )}\leq
2\sup_{\left\Vert f\right\Vert _{L^{2}(\sigma )}=1}\sup_{\left\Vert \phi
\right\Vert _{L^{2}(\omega )}=1}\mathbb{E}_{\beta }\mathbb{E}_{\beta
^{\prime }}\lvert \left\langle T\mathsf{P}_{\textup{good}}^{\sigma }f,%
\mathsf{P}_{\textup{good}}^{\omega }\phi \right\rangle _{\omega }\rvert \,.
\label{e.Tgood}
\end{equation}%
Indeed, we can choose $f\in L^{2}(\sigma )$ of norm one, and $\phi \in
L^{2}(\omega )$ of norm one, and we can write 
\begin{equation*}
f=\mathsf{P}_{\textup{good}}^{\sigma }f+\mathsf{P}_{\textup{bad}}^{\sigma
}f\,
\end{equation*}%
and similarly for $\phi $, so that 
\begin{align*}
\left\Vert T\right\Vert _{L^{2}(\sigma )\rightarrow L^{2}(\omega )}&
=\left\langle Tf,\phi \right\rangle _{\omega } \\
& \leq \mathbb{E}_{\beta }\mathbb{E}_{\beta ^{\prime }}\lvert \left\langle T%
\mathsf{P}_{\textup{good}}^{\sigma }f,\mathsf{P}_{\textup{good}}^{\omega
}\phi \right\rangle _{\omega }\rvert +\mathbb{E}_{\beta }\mathbb{E}_{\beta
^{\prime }}\lvert \left\langle T\mathsf{P}_{\textup{bad}}^{\sigma }f,%
\mathsf{P}_{\textup{good}}^{\omega }\phi \right\rangle _{\omega }\rvert \\
& \quad +\mathbb{E}_{\beta }\mathbb{E}_{\beta ^{\prime }}\lvert \left\langle
T\mathsf{P}_{\textup{good}}^{\sigma }f,\mathsf{P}_{\textup{bad}}^{\omega
}\phi \right\rangle _{\omega }\rvert +\mathbb{E}_{\beta }\mathbb{E}_{\beta
^{\prime }}\lvert \left\langle T\mathsf{P}_{\textup{bad}}^{\sigma }f,%
\mathsf{P}_{\textup{bad}}^{\omega }\phi \right\rangle _{\omega }\rvert \\
& \leq \mathbb{E}_{\beta }\mathbb{E}_{\beta ^{\prime }}\lvert \left\langle T%
\mathsf{P}_{\textup{good}}^{\sigma }f,\mathsf{P}_{\textup{good}}^{\omega
}\phi \right\rangle _{\omega }\rvert +3C\cdot 2^{-r/16}\left\Vert
T\right\Vert _{L^{2}(\sigma )\rightarrow L^{2}(\omega )}\ .
\end{align*}%
And this proves \eqref{e.Tgood} for $r$ sufficiently large depending on $%
\varepsilon >0$.

This has the following implication for us: \emph{It suffices to consider
only $r$-good intervals, and prove an estimate for $\left\Vert H(\sigma
\cdot )\right\Vert _{L^{2}(\sigma )\rightarrow L^{2}(\omega )}$ that is
independent of this assumption.} Accordingly, we will call $r$-good
intervals just \emph{good intervals} from now on.

\section{Main Decomposition}

\label{s.MD}

Fix (large) intervals $I^{0}\in \mathcal{D}^{\sigma }$ and $J^{0}\in 
\mathcal{D}^{\omega }$, and consider the following modification of the
`good' projections 
\begin{equation*}
\mathsf{P}_{\textup{good},I^{0}}^{\sigma }f\equiv \sum_{I\in \mathcal{D}%
^{\sigma }\,,I\subset I^{0}\text{ and }\lvert I\rvert \leq 2^{-r}\lvert
I^{0}\rvert \,,I\textup{\ good}}\Delta _{I}^{\sigma }f\,.
\end{equation*}%
Likewise, define $\mathsf{P}_{\textup{good},J^{0}}^{\omega }\phi $ as
above. We will prove that 
\begin{equation}
\left\vert \left\langle H(\sigma \mathsf{P}_{\textup{good},I^{0}}^{\sigma
}f),\mathsf{P}_{\textup{good},J^{0}}^{\omega }\phi \right\rangle _{\omega
}\right\vert \lesssim \max \{\mathcal{A}_{2},\,\mathcal{H},\,\mathcal{H}%
^{\ast },\,\mathcal{F}_{\gamma,\varepsilon},\,\mathcal{F}_{\gamma,\varepsilon}^{\ast }\}\left\Vert
f\right\Vert _{L^{2}(\sigma )}\left\Vert \phi \right\Vert _{L^{2}(\omega
)}\,.  \label{e.prove1}
\end{equation}%
As this estimate will hold for all $I^{0},J^{0}$, and all joint shifts of $%
\mathcal{D}^{\sigma }$ and $\mathcal{D}^{\omega }$ that avoid point masses
at the boundary of intervals, this is sufficient to derive the Main Theorem %
\ref{main}. We also use here that the constant terms associated with the
initial intervals $I^{0}$ and $J^{0}$ in the expansion of $f$ and $\phi $
respectively can be handled by the weak boundedness condition \eqref{wbc}.
This means we can assume the expectations $\mathbb{E}_{I_{0}}^{\sigma }f$
and $\mathbb{E}_{J_{0}}^{\omega }\phi $ both vanish.

\begin{figure}[tbp]
\begin{center}
\begin{tikzpicture}
[level distance=20mm][>=open triangle 90] 
\node[shape aspect=2,diamond,draw]  { $ \bigl\langle \operatorname H (\sigma  f), \phi  \bigr\rangle _{\omega  }$ }
  child {node[shape aspect=2,diamond,draw] (1) {$ A^1_2$} }
  child[missing] {node {}}	 
   child[missing] {node {}}	 
  child[missing] {node {}}
  child {node[shape aspect=2,diamond,draw] (2) {$ A^1_1$}
  		 child {node[shape aspect=2,diamond,draw] {$ A^2_3$} 
	   			child {node[shape aspect=2,diamond,draw] {$ A^3_2$} 
	   				child  {node[rectangle,draw]  {$ A ^{4} _{1}$} 	edge from parent node[left,above] {$ \mathcal A_2$}  }
	   			 child[missing] {node {}}
	   			child {node[shape aspect=2,diamond,draw] {$ A^4_2$} %
child  {node[rectangle,draw]  {$ A ^{5} _{1}$} edge from parent node[left] {$ \mathcal H+ \mathcal F_{\gamma,\varepsilon}$}  }
 child[missing] {node {}}	   			
  child[missing] {node {}}	   			
child {node[shape aspect=2,diamond,draw] {$ A^5_2$}
	child  {node[shape aspect=2,diamond,draw]  {$ A ^{6} _{1}$} 
	 	child{node[shape aspect=2,rectangle,draw]    {$ A ^{7} _{3}$} 	edge from parent node[left] {$ \mathcal F_{\gamma,\varepsilon}$}  node[left, sloped, below] {$_{ 0 <\gamma}$ }}
	 	child[missing] {node {}}
	 	child  {node[shape aspect=2,rectangle,draw]   {$ A ^{7} _{4}$} 	edge from parent node[right] {$ \mathcal F_{\gamma,\varepsilon}$} node[left, sloped, below] {${}_{\gamma >0} $ } } 
	 	}   
	child[missing] {node {}}
	child[missing] {node {}}
	child  {node[shape aspect=2,diamond,draw] (62) {$ A ^{6} _{2}$}
		child  {node[rectangle,draw]  {$ A ^{7} _{1}$} edge from parent node[left] {$ \mathcal H + \mathcal F_{\gamma,\varepsilon}  $}  }	
		  child[missing] {node {}}
		child {node[rectangle,draw]  {$ A ^{7}_2$}  edge from parent node[right] {$ \mathcal F_{\gamma,\varepsilon}  $}  }
					} 
	 child[missing] {node {}}					
	child  {node[rectangle,draw]  {$ A ^{6} _{3}$}   edge from parent node[right,above] {$ \mathcal F_{\gamma,\varepsilon}$   }  node[left, sloped, below] {${}_{\gamma >0} $ }  }
				edge from parent node[right, midway] {Paraproducts} 	}  
	   			edge from parent node[right,near end] {Corona} 	} 
	   				 child[missing] {node {}}
	   				 child[missing] {node {}}	   			
	   				child  {node[rectangle,draw]  {$ A ^{4} _{3}$} edge from parent node[right,above] {$ \mathcal F_{\gamma,\varepsilon}$}   node[left, sloped, below] {${}_{\gamma >0} $ }} } 
  		 		  child[missing] {node {}}
	   			child  {node[rectangle,draw]  {$ A ^{3} _{1}$} edge from parent node[right] {$  \mathcal A_2$}  }
	       	}   
	   	  child[missing] {node {}}	
  		child  {node[rectangle,draw]  {$ A ^{2} _{1}$}
  		 edge from parent node[right] {$ \mathcal W$}
	       }
	         child[missing] {node {}}
	       child  {node[rectangle,draw]  {$ A ^{2} _{2}$}
  		 edge from parent node[right] {$ \mathcal A_2$}
	       }
  	  }; 
  	   \draw[thick,dotted,thick,->>] (1) -- (2) node[above,midway] {duality};
\end{tikzpicture}  
\end{center}
\caption{The flow chart of the decomposition of the inner product $%
\left\langle H\left( \protect\sigma f\right) ,\protect\phi \right\rangle _{%
\protect\omega }$. }
\label{f.scheme}
\end{figure}
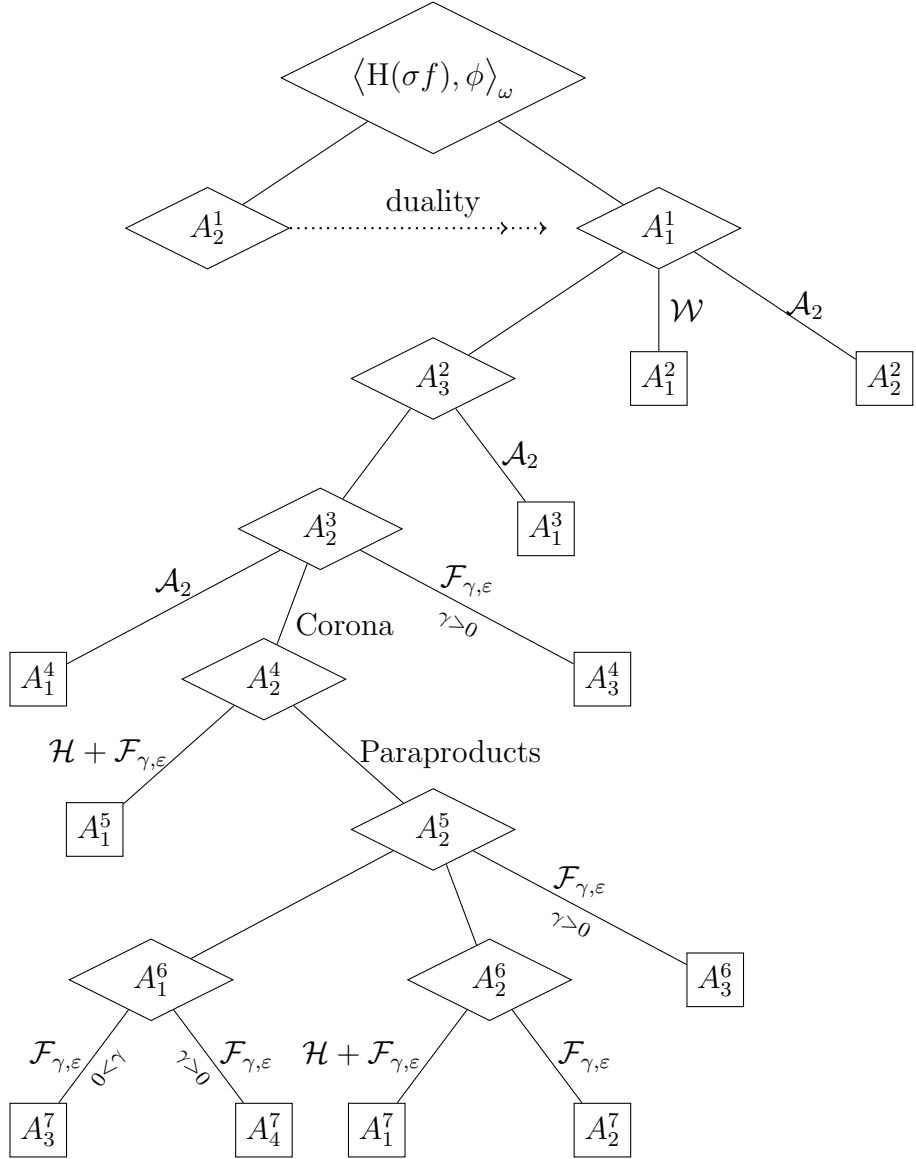


We may assume that $\mathsf{P}_{\textup{good},I^{0}}^{\sigma }f=f$, and
likewise for $\phi $. \textbf{From this point forward, we will only consider
good intervals $I,J$. } We suppress this dependence in the notation and we
will clearly note the use of this hypothesis when it arises. Similarly we
will only consider intervals $I,J$ that contribute to the definition of the
projections $\mathsf{P}_{\textup{good},I^{0}}^{\sigma }$ and $\mathsf{P}_{%
\textup{good},J^{0}}^{\omega }$, and suppress this fact in the notation.
The role of $I^{0}$ and $J^{0}$ permit the recursive constructions of the 
\emph{stopping intervals} in Definition~\ref{d.stopping} below.

Now, the inner product in \eqref{e.prove1} is 
\begin{align*}
\sum_{I\in \mathcal{D}^{\sigma }}\sum_{J\in \mathcal{D}^{\omega
}}\left\langle H(\sigma \Delta _{I}^{\sigma }f),\Delta _{J}^{\omega }\phi
\right\rangle _{\omega }& =\sum_{I\in \mathcal{D}^{\sigma }}\sum_{J\in 
\mathcal{D}^{\omega }}\left\langle f,h_{I}^{\sigma }\right\rangle _{\sigma
}\left\langle H(\sigma h_{I}^{\sigma }),h_{J}^{\omega }\right\rangle
_{\omega }\left\langle \phi ,h_{J}^{\omega }\right\rangle _{\omega } \\
& =A_{1}^{1}+A_{2}^{1}\,,
\end{align*}%
where $\mathcal{A}_{1}^{1}\equiv \{(I,J)\in \mathcal{D}^{\sigma }\times 
\mathcal{D}^{\omega }\;:\;\lvert J\rvert \leq \lvert I\rvert \}$, and we use
the notation 
\begin{equation*}
A_{j}^{i}\equiv \sum_{(I,J)\in \mathcal{A}_{j}^{i}}\left\langle
f,h_{I}^{\sigma }\right\rangle _{\sigma }\left\langle H(\sigma h_{I}^{\sigma
}),h_{J}^{\omega }\right\rangle _{\omega }\left\langle \phi ,h_{J}^{\omega
}\right\rangle _{\omega }\,.
\end{equation*}%
The term $A_{2}^{1}$ is the complementary sum. The sums are estimated
symmetrically. Thus it suffices to prove \eqref{e.prove1} for the sum $%
A_{1}^{1}$. Indeed, the starred constants do not enter into this estimate,
but by duality will enter into those for $A_{2}^{1}$, in which the roles of $%
\omega $ and $\sigma $ are reversed.

We shall follow the argument outlined in Chapters 18-22 of \cite{Vol} by
making several more decompositions, generating a number of terms $A_{j}^{i}$%
. These will be bilinear forms, but we will suppress the dependence of these
forms on the functions $f$ and $\phi $. In this notation, the superscript $i$
denotes the generation of the decomposition, and we will go to seven
generations. The subscript $j$ counts the number of decompositions in a
generation. To aid the reader's understanding of the argument, a flow chart
of the decompositions is given in Figure~\ref{f.scheme}. It contains
information about the proof, which we describe here.

\begin{itemize}
\item The chart is read from top to bottom, with the root of the chart
containing the inner product $\left\langle H(\sigma f),\phi \right\rangle
_{\omega }$.

\item Terms in diamonds are further decomposed, while terms in rectangles
are final estimates. The edges leading into rectangles are labeled by the
hypotheses used to control them, $\mathcal{A}_{2}$, $\mathcal{H}$, $\mathcal{%
W}$, or $\mathcal{F}_{\gamma,\varepsilon}$ in the figure.

\item There are three terms, $A_{3}^{4}$, $A_{3}^{6}$ and the two from $%
A_{1}^{6}$ for which the Energy Hypothesis with $\gamma >0$ is essential.
The edges leading into these terms are labeled to indicate this.

\item The horizontal dotted arrow from $A_{2}^{1}$ to $A_{1}^{1}$, labeled
`duality', indicates that $A_{2}^{1}$ is controlled by the argument for $%
A_{1}^{1}$, after exchanging the roles of $f$ and $\phi $. Accordingly, the
final estimates in the dual tree will be in terms of the dual hypotheses,
namely $\mathcal{F}_{\gamma,\varepsilon}^{\ast }$ and $\mathcal{H}^{{\ast }}$.

\item The edge leading into $A_{2}^{4}$ is labeled `Corona' to indicate that
the Corona decomposition of \S \ref{s.stopping} is used at this point. This
is an important stage in the decomposition and one of the key ideas in \cite%
{NTV3}. We modify it with the use of our Energy Hypothesis.

\item The edge leading into $A_{2}^{5}$ is labeled `Paraproducts' as all of
the estimates in the fifth and subsequent generations use paraproduct
arguments to control them. See \S \ref{s.paraproducts}. We organize the
written proof to pass to the paraproducts first, with the other estimates
taken up second in \S \ref{s.remain}.
\end{itemize}

We return to the main line of the proof of sufficiency. The collection of
pairs of intervals $\mathcal{A}_{1}^{1}$ is decomposed into the collections%
\begin{equation}
\mathcal{A}_{1}^{2}\equiv \{(I,J)\in \mathcal{A}_{1}^{1}\;:\;2^{-r}\lvert
I\rvert \leq \lvert J\rvert \leq \lvert I\rvert \,,\textup{dist}(I,J)\leq
\lvert I\rvert \}\,,  \label{A21}
\end{equation}%
\begin{equation}
\mathcal{A}_{2}^{2}\equiv \{(I,J)\in \mathcal{A}_{1}^{1}\;:\;\lvert J\rvert
\leq \lvert I\rvert \,,\textup{dist}(I,J)>\lvert I\rvert \}\,,  \label{A22}
\end{equation}%
\begin{equation}
\mathcal{A}_{3}^{2}\equiv \{(I,J)\in \mathcal{A}_{1}^{1}\;:\;\lvert J\rvert
<2^{-r}\lvert I\rvert \,,\textup{dist}(I,J)\leq \lvert I\rvert \}\,.
\label{A23}
\end{equation}%
Using the notation of \cite{Vol} and \cite{NTV3}, we refer to $\mathcal{A}%
_{1}^{2}$ as the `diagonal short-range' terms; $\mathcal{A}_{2}^{2}$ are the
`long-range' terms; and $\mathcal{A}_{3}^{2}$ are the `short-range' terms.

We will show in \S \ref{s.21} and \S \ref{s.22}, respectively,%
\begin{equation}
\lvert A_{1}^{2}\rvert \lesssim \mathcal{H}\left\Vert f\right\Vert
_{L^{2}(\sigma )}\left\Vert \phi \right\Vert _{L^{2}(\omega )}\,,
\label{A21<}
\end{equation}%
\begin{equation}
\lvert A_{2}^{2}\rvert \lesssim \mathcal{A}_{2}\left\Vert f\right\Vert
_{L^{2}(\sigma )}\left\Vert \phi \right\Vert _{L^{2}(\omega )}\,.
\label{A22<}
\end{equation}%
These inequalities are obtained in Chapters 18 and 19 of \cite{Vol}.

The term $\mathcal{A}_{3}^{2}$ is the important one, and will be further
decomposed into%
\begin{gather}
\mathcal{A}_{1}^{3}\equiv \{(I,J)\in \mathcal{D}^{\sigma }\times \mathcal{D}%
^{\omega }\;:\;\lvert J\rvert <2^{-r}\lvert I\rvert \,,I\cap J=\emptyset \,,%
\textup{dist}(I,J)\leq \lvert I\rvert \}  \label{e.a31} \\
\mathcal{A}_{2}^{3}\equiv \{(I,J)\in \mathcal{D}^{\sigma }\times \mathcal{D}%
^{\omega }\;:\;\lvert J\rvert <2^{-r}\lvert I\rvert \,,I\cap J\neq \emptyset
\}\,.  \label{e.a32}
\end{gather}%
The `mid-range' term $A_{1}^{3}$ will be handled by a variant of the method
used on the `long-range' term, along with the $A_{2}$ condition. In
particular, in \S \ref{s.31} we prove 
\begin{equation}
\lvert A_{1}^{3}\rvert \lesssim \mathcal{F}_{\gamma ,\varepsilon }\left\Vert
f\right\Vert _{L^{2}(\sigma )}\left\Vert \phi \right\Vert _{L^{2}(\omega
)}\,.  \label{e.a31<}
\end{equation}%
Thus, $A_{2}^{3}$ is the true `short-range' term. It is imperative to
observe that for $(I,J)\in \mathcal{A}_{2}^{3}$, we must have $J\subset I$,
for otherwise we violate the fact that $J$ is good.

\section{Energy, Stopping Intervals, Corona Decomposition}

\label{s.stopping}

Our focus is on the short-range term, as given by \eqref{e.a32}, and it is
here that our Energy Condition \eqref{energy condition} will arise in place
of the Pivotal Condition in \cite{NTV3}. This is a critical section in this
proof, and it has three purposes. First, to derive the Energy Lemma, and
combine it with the Energy Hypothesis. Second, to make the definition of the
Corona. Third, use the Corona to obtain the next stage in the decomposition
of the short-range term.


\subsection{The Energy Lemma}

\label{s.EL}

As is typical in proofs that involve identification of a paraproduct, one
should add and subtract cancellative terms, in order that the paraproducts
become more apparent. Take a pair $(I,J)\in \mathcal{A}_{2}^{3}$. Thus, $%
J\cap I\neq \emptyset $ and $\lvert J\rvert \leq 2^{-r}\lvert I\rvert $. But 
$J$ is good, so that we have $J\subset I$, but not only that, we have 
\begin{equation}
\textup{dist}(e(I),J)\geq \lvert J\rvert ^{\varepsilon }\lvert I\rvert
^{1-\varepsilon }\,.  \label{e.goodFar}
\end{equation}%
Let $I_{J}$ be the child of $I$ that contains $J$, and let $\widehat{I}$
denote some ancestor of $I_{J}$. (The specific sequence of ancestors is to
be selected below, in Definition~\ref{d.stopping}.) We write 
\begin{align}
\left\langle H(\sigma \Delta _{I}^{\sigma }f),\Delta _{J}^{\omega }\phi
\right\rangle _{\omega }& =\left\langle H(\mathbf{1}_{I\backslash
I_{J}}\sigma \Delta _{I}^{\sigma }f),\Delta _{J}^{\omega }\phi \right\rangle
_{\omega }+\left\langle H(\mathbf{1}_{I_{J}}\sigma \Delta _{I}^{\sigma
}f),\Delta _{J}^{\omega }\phi \right\rangle _{\omega } \\
& =\left\langle H(\mathbf{1}_{I\backslash I_{J}}\sigma \Delta _{I}^{\sigma
}f),\Delta _{J}^{\omega }\phi \right\rangle _{\omega }  \label{e.neighbor} \\
& \qquad +\mathbb{E}_{I_{J}}^{\sigma }\Delta _{I}^{\sigma }f\cdot
\left\langle H(\sigma \mathbf{1}_{\widehat{I}}),\Delta _{J}^{\omega }\phi
\right\rangle _{\omega }  \label{e.energy} \\
& \qquad -\mathbb{E}_{I_{J}}^{\sigma }\Delta _{I}^{\sigma }f\cdot
\left\langle H(\sigma \mathbf{1}_{\widehat{I}\backslash I_{J}}),\Delta
_{J}^{\omega }\phi \right\rangle _{\omega }  \label{e.stopping}
\end{align}%
As in \cite{Vol} and \cite{NTV3}, we refer to the three terms in the last
line as, respectively, the `neighbor' term, the `paraproduct' term (called
`middle' term in \cite{Vol} and \cite{NTV3}), and the `stopping' term. Note
that $\Delta _{I}^{\sigma }f$ takes a single value on $I_{J}$, which is
exactly $\mathbb{E}_{I_{J}}^{\omega }\Delta _{I}^{\sigma }f$.

Our analysis of the stopping term in \eqref{e.stopping} will bring forward
the energy condition \eqref{energy condition}, and yields a more general
inequality which we formulate in this Lemma. Recall that%
\begin{equation*}
\Phi \left( I,E\right) \equiv \omega \left( I\right) \mathsf{E}\left(
I,\omega \right) ^{2}\mathsf{P}\left( I,\mathbf{1}_{E}\sigma \right) ^{2}.
\end{equation*}

\begin{lemma}[\textbf{Energy Lemma}]
\label{energy lemma}Let $J\subset I^{\prime }\subset \widehat{I}$ be three
intervals with%
\begin{equation}
dist\left( \partial I^{\prime },J\right) \geq \lvert J\rvert  \label{e.Jgood}
\end{equation}%
(This follows from the  good property for dyadic intervals, but we do not assume that
any of these three intervals are dyadic.) Let $\Phi _{J}$ be a function
supported in $J$ and with $\omega $-integral zero. Then we have%
\begin{equation}
\left\vert \left\langle H\left( \mathbf{1}_{\widehat{I}\backslash I^{\prime
}}\sigma \right) ,\Phi _{J}\right\rangle _{\omega }\right\vert \leq
C\left\Vert \Phi _{J}\right\Vert _{L^{2}\left( \omega \right) }\Phi \left( J,%
\widehat{I}\setminus I^{\prime }\right) ^{\frac{1}{2}}.  \label{e.Eimplies}
\end{equation}
\end{lemma}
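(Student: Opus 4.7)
The plan is to exploit the zero-$\omega$-mean of $\Phi_J$ on $J$ to replace $H(\nu)(x)$ by a mean-adjusted quantity, then use the good separation of $J$ from the support of $\nu\equiv \mathbf{1}_{\widehat{I}\setminus I'}\sigma$ to pick up a second-order gain that produces both the energy $\mathsf{E}(J,\omega)$ and the Poisson $\mathsf{P}(J,\mathbf{1}_{\widehat{I}\setminus I'}\sigma)$.

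First, since $\int_J \Phi_J\,d\omega=0$, for \emph{any} constant $c$ we have
\begin{equation*}
\langle H(\nu),\Phi_J\rangle_\omega
  =\int_J \bigl(H(\nu)(x)-c\bigr)\Phi_J(x)\,d\omega(x),
\end{equation*}
so by Cauchy--Schwarz and the variance identity $\int_J(g-\mathbb{E}_J^\omega g)^2\,d\omega=\tfrac{1}{2\omega(J)}\int_J\!\!\int_J(g(x)-g(y))^2\,d\omega(x)d\omega(y)$ applied with $g=H\nu$ and $c=\mathbb{E}_J^\omega H\nu$,
\begin{equation*}
|\langle H(\nu),\Phi_J\rangle_\omega|^2
  \le \|\Phi_J\|_{L^2(\omega)}^2\,
  \frac{1}{2\omega(J)}\int_J\!\!\int_J \bigl(H\nu(x)-H\nu(y)\bigr)^2\,d\omega(x)d\omega(y).
\end{equation*}

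Second, I use the pointwise estimate
\begin{equation*}
|H\nu(x)-H\nu(y)|
  =\Bigl|(x-y)\int\frac{d\nu(z)}{(z-x)(z-y)}\Bigr|
  \lesssim \frac{|x-y|}{|J|}\,\mathsf{P}(J,\nu),\qquad x,y\in J,
\end{equation*}
which is the same kind of estimate used in Lemma~\ref{l.neccInequality}. The justification is the separation hypothesis $\operatorname{dist}(\partial I',J)\ge|J|$: together with $\operatorname{supp}\nu\subset \widehat I\setminus I'$ this forces $\operatorname{dist}(z,J)\ge|J|$ for every $z\in\operatorname{supp}\nu$, hence $|z-x|\approx|z-y|\approx\operatorname{dist}(z,J)+|J|$, and the integral is comparable to $\mathsf{P}(J,\nu)/|J|$ by the very definition of the Poisson functional.

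Third, I insert this estimate into the double integral and invoke the identity
\begin{equation*}
\frac{1}{2}\mathbb{E}_J^{\omega(dx)}\mathbb{E}_J^{\omega(dx')}\frac{(x-x')^2}{|J|^2}=\mathsf{E}(J,\omega)^2
\end{equation*}
from Definition~\ref{d.energy}, which gives $\int_J\!\int_J(x-y)^2\,d\omega\,d\omega=2\,\omega(J)^2|J|^2\mathsf{E}(J,\omega)^2$. Combining,
\begin{equation*}
|\langle H(\nu),\Phi_J\rangle_\omega|^2
  \lesssim \|\Phi_J\|_{L^2(\omega)}^2\,\omega(J)\,\mathsf{E}(J,\omega)^2\,\mathsf{P}(J,\nu)^2
  =\|\Phi_J\|_{L^2(\omega)}^2\,\Phi(J,\widehat I\setminus I'),
\end{equation*}
which is \eqref{e.Eimplies}. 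The only delicate point is verifying the pointwise kernel estimate with the correct constants, but this is straightforward once the distance comparability $|z-x|\approx \operatorname{dist}(z,J)+|J|$ is established from the hypothesis $\operatorname{dist}(\partial I',J)\ge|J|$; no deeper difficulty arises.
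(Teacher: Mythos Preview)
Your proof is correct and follows essentially the same approach as the paper's: both subtract the $\omega$-mean of $H\nu$ using the zero-integral hypothesis, exploit the kernel identity $\frac{1}{x-z}-\frac{1}{x'-z}=\frac{x'-x}{(x-z)(x'-z)}$ together with the separation $\operatorname{dist}(\partial I',J)\ge|J|$ to produce the Poisson factor, and recognize the remaining $(x-x')^2/|J|^2$ average as the energy $\mathsf{E}(J,\omega)^2$. The only cosmetic difference is ordering: the paper applies Cauchy--Schwarz \emph{after} averaging the kernel over $x'$ and integrating in $y$, while you apply Cauchy--Schwarz first and then invoke the variance double-integral identity; both routes land on $\omega(J)\mathsf{E}(J,\omega)^2\mathsf{P}(J,\nu)^2$.
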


The $L^{2}$ formulation in \eqref{e.Eimplies} proves useful in many
estimates below, in particular in the proof of the Carleson measure
estimate, Theorem \ref{t.miraculous}. Indeed, we will apply %
\eqref{e.Eimplies} in the dual formulation. Namely, we have 
\begin{equation}
\left\Vert H\left( \mathbf{1}_{\widehat{I}\backslash I^{\prime }}\sigma
\right) -\mathbb{E}_{J}^{\omega }H\left( \mathbf{1}_{\widehat{I}\backslash
I^{\prime }}\sigma \right) \right\Vert _{L^{2}(J,\omega )}\lesssim \Phi
\left( J,\widehat{I}\setminus I^{\prime }\right) ^{\frac{1}{2}}.
\label{e.Edual}
\end{equation}%
Note that on the left, we are subtracting off the mean value, and only
testing the $L^{2}(\omega )$ norm on $J$.

\begin{proof}
For $x,x^{\prime }\in J$, and $y\in \widehat{I}\backslash I^{\prime }$, we
have the equality 
\begin{equation}
\frac{1}{x-y}-\frac{1}{x^{\prime }-y}=\frac{x-x^{\prime }}{\lvert J\rvert }%
\cdot \frac{\lvert J\rvert }{(x-y)(x^{\prime }-y)}  \label{e.g=}
\end{equation}%
We use \eqref{e.Jgood} to estimate the second term by 
\begin{equation}
\frac{\left\vert J\right\vert }{(x-y)(x^{\prime }-y)}\lesssim \frac{%
\left\vert J\right\vert }{\vert y-c_J \vert ^{2}},  \label{sec term}
\end{equation}%
where $c_J$ is the center of $J$.

Turning to the inner product, the fact that $\Phi _{J}$ is supported on $J$
and has $\omega $-mean zero permits us the usual cancellative estimate on
the kernel. This familiar argument requires the selection of an auxiliary
point in $J$, and we use the measure $\omega $ to select it. We have 
\begin{align}
\left\vert \left\langle H(\sigma \mathbf{1}_{\widehat{I}\backslash I^{\prime
}}),\Phi _{J}\right\rangle _{\omega }\right\vert & =\left\vert \left\langle
H(\sigma \mathbf{1}_{\widehat{I}\backslash I^{\prime }})-\mathbb{E}%
_{J}^{\omega }H(\sigma \mathbf{1}_{\widehat{I}\backslash I^{\prime }}),\Phi
_{J}\right\rangle _{\omega }\right\vert  \label{e.g<<} \\
& =\left\vert \int_{J}\int_{I\backslash I^{\prime }}\mathbb{E}_{J}^{\omega
(dx^{\prime })}\left( \frac{1}{x-y}-\frac{1}{x^{\prime }-y}\right) \Phi
_{J}(x)\;\sigma (dy)\;\omega (dx)\right\vert \\
& \lesssim \int_{J}\mathbb{E}_{J}^{\omega (dx^{\prime })}\frac{\lvert
x-x^{\prime }\rvert }{\lvert J\rvert }\lvert \Phi _{J}(x)\rvert \;\omega
(dx)\cdot \mathsf{P}(J,\mathbf{1}_{\widehat{I}\backslash I^{\prime }}\sigma )
\\
& \leq \left\Vert \Phi _{J}\right\Vert _{L^{2}(\omega )}\left(
\int_{J}\left( \mathbb{E}_{J}^{\omega (dx^{\prime })}\frac{\lvert
x-x^{\prime }\rvert }{\lvert J\rvert }\right) ^{2}\;\omega (dx)\right) ^{1/2}%
\mathsf{P}(J,\mathbf{1}_{\widehat{I}\backslash I^{\prime }}\sigma ) \\
& =\left\Vert \Phi _{J}\right\Vert _{L^{2}(\omega )}\Phi \left( J,\widehat{I}%
\setminus I^{\prime }\right) ^{\frac{1}{2}}\,.
\end{align}%
This completes the proof.
\end{proof}

\subsection{The Corona Decomposition}

We now make two important definitions from \cite{NTV3}: `stopping intervals'
and the `Corona Decomposition'. This is the main point of departure for our
proof. But first we recall the notation introduced in (\ref{def functionals}%
),  \eqref{def Psi gamma},
\begin{eqnarray*}
\Phi \left( I,E\right) &\equiv &\omega \left( I\right) \mathsf{E}\left(
I,\omega \right) ^{2}\mathsf{P}\left( I,\mathbf{1}_{E}\sigma \right) ^{2}, \\
\Psi _{\gamma ,\varepsilon }\left( I,E\right) &\equiv
&\sup_{I=\bigcup_{s\geq 1}J_{s}}\inf_{s\geq 1}\left[ \frac{\left\vert
I\right\vert }{\left\vert J_{s}\right\vert }\right] ^{\gamma }\times
\sum_{s\geq 1}\Phi \left( J_{s},E\right) ,
\end{eqnarray*}%
where the supremum is over all $\varepsilon $-good subpartitions $\left\{
J_{s}\right\} _{s\geq 1}$ of $I$, and the Energy Hypothesis \eqref{smallest condition}%
\begin{equation*}
\sum_{r\geq 1}\Psi _{\gamma ,\varepsilon }\left( I_{r},I_{0}\right) \leq 
\mathcal{F}_{\gamma,\varepsilon}^{2}\sigma (I_{0}),
\end{equation*}%
where $\gamma >0,\varepsilon >0$ are fixed. Recall that $\Phi $ appears in
the Energy Condition \eqref{energy condition} and in the dual Energy
Estimate \eqref{e.Edual}, while the larger functional $\Psi _{\gamma
,\varepsilon }$ appears in the Energy Hypothesis \eqref{smallest condition}.
The key properties required of $\Psi $ are given in \eqref{Psi properties},
and result in the crucial off-diagonal decay of $\Psi $ relative to $\Phi $
in Theorem \ref{t.miraculous} used to estimate term $A_{3}^{6}$, as well as
the estimates for the term $A_{1}^{6}$ and the stopping term $A_{3}^{4}$ in
Subsection \ref{s.43}.

\begin{definition}
\label{d.stopping} Given any interval $I_{0}$, set $\mathcal{S}(I_{0})$ to
be the maximal $\mathcal{D}^{\sigma }$ strict subintervals $S\subsetneqq
I_{0}$ such that%
\begin{equation}
\Psi _{\gamma ,\varepsilon }\left( S,I_{0}\right) \geq 4\mathcal{F}_{\gamma,  \varepsilon  }^{2}\sigma (S),  \label{e.stoppingDef}
\end{equation}%
The collection $\mathcal{S}(I_{0})$ can be empty.

We now recursively define $\mathcal{S}_{1}\equiv \{I^{0}\}$, and $\mathcal{S}%
_{j+1}\equiv \bigcup_{S\in \mathcal{S}_{j}}\mathcal{S}(S)$. The collection $%
\mathcal{S}\equiv \bigcup_{j=1}^{\infty }\mathcal{S}_{j}$ is the collection
of \emph{stopping intervals.} Define $\rho \;:\;\mathcal{S}\rightarrow 
\mathbb{N}$ by $\rho (S)=j$ for all $S\in \mathcal{S}_{j}$, so that $\rho
(S) $ denotes the `generation' in which $S$ occurs in the construction of $%
\mathcal{S}$.
\end{definition}

\begin{remark}
\label{r.stopping} It is worth emphasizing that we will \emph{not have} a
uniform inequality of the following nature available to us: 
\begin{equation*}
\Psi _{\gamma ,\varepsilon }\left( S,I_{0}\right)
\lesssim \sigma (S).
\end{equation*}%
In a similar, but different direction, one might be tempted to make the
simpler definition of a stopping interval that it is a maximal subinterval $%
S\subsetneqq I_{0}$ for which one has%
\begin{equation*}
\Phi (S, I_0) = 
\mathsf{E}(S,\omega )^{2}\mathsf{P}(S,\mathbf{1}_{I_{0}}\sigma )^{2}\omega
(S)\geq 4\mathcal{E}^{2}\sigma (S).
\end{equation*}%
This simpler condition does not permit one to fully exploit the Energy
Hypothesis.
\end{remark}

We now define the associated Corona Decomposition.

\begin{definition}
\label{d.corona} For $S\in \mathcal{S}$, we set $\mathcal{P }(S)$ to be all
the pairs of intervals $(I, J)$ such that 

\begin{enumerate}
\item $I\in \mathcal{D }^{\sigma }$, $J\in \mathcal{D }^{\omega }$, $%
J\subset I$, and $\lvert J\rvert< 2 ^{-r} \lvert I\rvert $.

\item $S$ is the $\mathcal{S}$-parent of $I _{J}$, the child of $I$ that
contains $J$.
\end{enumerate}

Note that $\mathcal{A }^{3} _{2} = \bigcup _{S\in \mathcal{S}} \mathcal{P }%
(S)$, where $\mathcal{A }^{3} _{2}$ is defined in \eqref{e.a32}. Let $%
\mathcal{C}^{\sigma }(S)$ to be all those $I\in \mathcal{D}^{\sigma }$ such
that $S$ is a minimal member of $\mathcal{S}$ that contains a $\mathcal{D }%
^{\sigma }$-child of $I$. (A fixed interval $I$ can be in two collections $%
\mathcal{C }^{\sigma } (S)$.) The definition of $\mathcal{C}^{\omega }(S)$
is similar but not symmetric: all those $J\in \mathcal{D} ^{\omega }$ such
that $S$ is the smallest member of $\mathcal{S}$ that contains $J$ and
satisfies $2^{r}\lvert J\rvert <\lvert S\rvert $. The collections $\{%
\mathcal{C}^{\sigma }(S)\;:\;S\in \mathcal{S}\}$ and $\{\mathcal{C}^{\omega
}(S)\;:\;S\in \mathcal{S}\}$ are referred to as the \emph{Corona
Decompositions}. Note that $\mathcal{S}\subset \mathcal{D}^{\sigma }$ and $%
\mathcal{C}^{\sigma }(S)\subset \mathcal{D}^{\sigma }$ for $S\in \mathcal{S}$
while $\mathcal{C}^{\omega }(S)\subset \mathcal{D}^{\omega }$ for $S\in 
\mathcal{S}$.

We denote the associated projections by 
\begin{equation}
\mathsf{P}_{S}^{\sigma }f\equiv \sum_{I\in \mathcal{C}^{\sigma
}(S)}\left\langle f,h_{I}^{\sigma }\right\rangle _{\sigma }h_{I}^{\sigma }\,,
\label{e.PS}
\end{equation}%
and similarly for $\mathsf{P}_{S}^{\omega }\phi $. Note that $\mathsf{P}%
_{S}^{\omega }$ projects only on intervals $J$ with $\lvert J\rvert
<2^{-r}\lvert S\rvert $.
\end{definition}

We have the estimate below that we will appeal to a few times. 
\begin{equation}
\sum_{S\in \mathcal{S}}\lVert \mathsf{P}_{S}^{\sigma }f\rVert _{L^{2}(\sigma
)}^{2}\leq 2\lVert f\rVert _{L^{2}(\sigma )}^{2}  \label{e.2}
\end{equation}%
There is a similar inequality for $\mathsf{P}_{S}^{\omega }$ which we will
also use.

\begin{remark}
\label{r.epsilon} In the definition of the stopping intervals, we are using
the functional $\Psi _{\gamma ,\varepsilon } $ associated with the Energy Hypothesis \eqref{smallest
condition}. Thus the stopping intervals can be viewed as the \emph{enemy}
in verifying \eqref{smallest condition}.
\end{remark}

\subsection{The Decomposition of the Short-Range Term}

To conclude this section, our estimate of $A_{2}^{3}$ defined in %
\eqref{e.a32} combines the splitting \eqref{e.neighbor}, \eqref{e.energy}, %
\eqref{e.stopping} and the Corona Decomposition. Namely, the Corona
Decomposition selects the intervals $\widehat{I}$ that appear in %
\eqref{e.neighbor}---\eqref{e.stopping} according to the following rule.
Recall that%
\begin{equation*}
\mathcal{A}_{2}^{3}\equiv \{(I,J)\in \mathcal{D}^{\sigma }\times \mathcal{D}%
^{\omega }\;:\;J\subset I\text{ and }\lvert J\rvert <2^{-r}\lvert I\rvert \}.
\end{equation*}%
Here, we have used the fact that $J$ is good to make the condition defining $%
\mathcal{A}_{2}^{3}$ more explicit, i.\thinspace e.\thinspace $J\subset I$
and $2^{r}\lvert J\rvert <\lvert I\rvert $.

\begin{definition}
Given a pair $\left( I,J\right) \in \mathcal{A}_{2}^{3}$, choose $\widehat{I}%
\in \mathcal{S}$ to be the unique stopping interval such that $I_{J}\in 
\mathcal{C}^{\sigma }( \widehat{I}) , $ where $I_{J}$ is the child of $I$
containing $J$. Equivalently, $\widehat{I}\in \mathcal{S}$ is determined by
the requirement that $\left( I,J\right) \in \mathcal{P}( \widehat{I}) $.
\end{definition}

Note that if $I_{J}\notin \mathcal{S}$, then $\widehat{I}\supset I$, while
if $I_{J}\in \mathcal{S}$, then $\widehat{I}$ is the child of $I$ containing 
$J$. Thus $\widehat{I}$ is a function of the \emph{pair} $\left( I,J\right) $%
. With this choice of $\widehat{I}$ in the splitting \eqref{e.neighbor}, %
\eqref{e.energy}, \eqref{e.stopping} we obtain $\lvert A_{2}^{3}\rvert \leq
\sum_{j=1}^{3}\lvert A_{j}^{4}\rvert $ where%
\begin{align}
A_{1}^{4}& \equiv \sum_{(I,J)\in \mathcal{A}_{2}^{3}}\left\langle H(\mathbf{1%
}_{I\backslash I_{J}}\sigma \Delta _{I}^{\sigma }f),\Delta _{J}^{\omega
}\phi \right\rangle _{\omega }  \label{e.Neighbor} \\
A_{2}^{4}& \equiv \sum_{S\in \mathcal{S}}\sum_{(I,J)\in \mathcal{P}(S)}%
\mathbb{E}_{I_{J}}^{\sigma }\Delta _{I}^{\sigma }f\cdot \left\langle H(%
\mathbf{1}_{S}\sigma ),\Delta _{J}^{\omega }\phi \right\rangle _{\omega }
\label{e.Energy} \\
A_{3}^{4}& \equiv \sum_{S\in \mathcal{S}}\sum_{(I,J)\in \mathcal{P}(S)}%
\mathbb{E}_{I_{J}}^{\sigma }\Delta _{I}^{\sigma }f\cdot \left\langle H(%
\mathbf{1}_{S\backslash I_{J}}\sigma ),\Delta _{J}^{\omega }\phi
\right\rangle _{\omega }  \label{e.Stopping}
\end{align}%
Recall that the three terms above are the neighbor, paraproduct, and
stopping terms respectively.

The paraproduct term $A_{2}^{4}$ is further decomposed, while we will prove
in \S \ref{s.41} and \S \ref{s.43} respectively,%
\begin{align}
\lvert A_{1}^{4}\rvert & \lesssim  \mathcal{A}_2 \left\Vert
f\right\Vert _{L^{2}(\sigma )}\left\Vert \phi \right\Vert _{L^{2}(\omega
)}\,,  \label{e.41<} \\
\lvert A_{3}^{4}\rvert & \lesssim \mathcal{F}_{\gamma,\varepsilon}\left\Vert
f\right\Vert _{L^{2}(\sigma )}\left\Vert \phi \right\Vert _{L^{2}(\omega
)}\,.  \label{e.43<}
\end{align}


\section{The Carleson Measure Estimates}

\label{s.carlesonMeasures}

This section is devoted to the statement and proof of several Carleson
measure estimates, designed with the considerations of the next section in
mind. We collect them here, due to the common sets of techniques used to
prove them.

The following technical Lemma encompasses many of the applications of the
Energy Hypothesis and the stopping time definition. For an interval $J\in 
\mathcal{D}^{\omega }$, let 
\begin{equation}
\widetilde{\mathsf{P}}_{J}^{\omega }\phi =\sum_{\substack{ J^{\prime }\in 
\mathcal{D}^{\omega }\;:\;J^{\prime }\subset J}}\langle \phi ,h_{J^{\prime
}}^{\omega}\rangle_{\sigma}h_{J^{\prime }} ^{\omega}.  \label{e.Ptilde}
\end{equation}%
Note that this projection $\widetilde{\mathsf{P}}_{J}^{\omega }$ is onto the
span of \emph{all} Haar functions $h_{J^{\prime }}$ supported in the $%
\mathcal{D}^{\omega }$-interval $J$. By contrast, $\mathsf{P}_{S}^{\omega }$
projects onto the span of those Haar functions $h_{J}$ with $J$ in the
corona $\mathcal{C}^{\omega }\left( S\right) $ where $S$ is a stopping
interval in the $\mathcal{D}^{\sigma }$ grid.

\begin{lemma}
\label{l.minimal} Fix an interval $I_{0}\in \mathcal{D}^{\sigma }$ and let $%
\widehat{I}_{0}\in \mathcal{S}$ be its $\mathcal{S}$-parent. Let $%
\{I_{r}\;:\;r\geq 1\}\subset \mathcal{D}^{\sigma }$ be a strict subpartition
of $I_{0}$. For $r\geq 1$, let $\{J_{r,s}\;:\;s\geq 1\}\subset \mathcal{D}%
^{\omega }$ be a subpartition of $I_{r}$ with $\lvert J_{r,s}\rvert
<2^{-t}\lvert I_{r}\rvert $ for all $r,s\geq 1$, where $t\geq r$ is the 
integer of Definition~\ref{d.good}. We then have 
\begin{equation}
\sum_{r,s\geq 1}\left\Vert \widetilde{\mathsf{P}}_{J_{r,s}}^{\omega }H(%
\mathbf{1}_{\widehat{I}_{0}\backslash I_{r}} \sigma )\right\Vert _{L^{2}(\omega
)}^{2}\lesssim 2^{-\gamma t}\mathcal{F}_{\gamma,\varepsilon}^{2}\sigma (I_{0})\,.
\label{e.minimal<}
\end{equation}
\end{lemma}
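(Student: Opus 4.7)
The plan is to combine the dual Energy estimate \eqref{e.Edual} with the three structural properties \eqref{Psi properties} of $\Psi_{\gamma,\varepsilon}$, extracting the gain $2^{-\gamma t}$ from the smallness condition $|J_{r,s}|<2^{-t}|I_r|$.

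First I would apply the dual Energy estimate term by term. Since each $J_{r,s}$ is a good interval of $\mathcal{D}^{\omega}$ and $|I_r|>2^{t}|J_{r,s}|\geq 2^{r}|J_{r,s}|$, Definition~\ref{d.good} gives $\operatorname{dist}(e(I_r),J_{r,s})>\tfrac12 |J_{r,s}|^{\varepsilon}|I_r|^{1-\varepsilon}\geq |J_{r,s}|$, verifying hypothesis \eqref{e.Jgood} of Lemma~\ref{energy lemma} with $I'=I_r$ and $\widehat{I}=\widehat{I}_0$. Because the orthogonal projection $\widetilde{\mathsf{P}}_{J_{r,s}}^{\omega}g$ equals $\mathbf{1}_{J_{r,s}}\bigl(g-\mathbb{E}_{J_{r,s}}^{\omega}g\bigr)$, the dual form \eqref{e.Edual} applied to $g=H(\mathbf{1}_{\widehat{I}_0\setminus I_r}\sigma)$ yields
\[
\bigl\|\widetilde{\mathsf{P}}_{J_{r,s}}^{\omega} H(\mathbf{1}_{\widehat{I}_0\setminus I_r}\sigma)\bigr\|_{L^2(\omega)}^{2}\lesssim \Phi(J_{r,s},\widehat{I}_0\setminus I_r).
\]

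Next I would split via $\widehat{I}_0\setminus I_r\subseteq (I_0\setminus I_r)\cup(\widehat{I}_0\setminus I_0)$ combined with the subadditivity $\Phi(J,A\cup B)\lesssim \Phi(J,A)+\Phi(J,B)$, which follows from $\mathsf{P}(J,\mathbf{1}_{A\cup B}\sigma)^{2}\lesssim \mathsf{P}(J,\mathbf{1}_{A}\sigma)^{2}+\mathsf{P}(J,\mathbf{1}_{B}\sigma)^{2}$. This reduces the problem to controlling $\Phi(J_{r,s},I_0\setminus I_r)$ and $\Phi(J_{r,s},\widehat{I}_0\setminus I_0)$ separately. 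For the first of these, $\{J_{r,s}\}_{s\geq 1}$ is an $\varepsilon$-good subpartition of $I_r$ (same size/goodness argument), so the third property of \eqref{Psi properties}, applied with the pair $I_r\subset I_0$ in place of $I_0\subset\widehat{I}$ there, combined with $|J_{r,s}|/|I_r|<2^{-t}$, gives
\[
\sum_{s\geq 1}\Phi(J_{r,s},I_0\setminus I_r)\leq 2^{-\gamma t}\,\Psi_{\gamma,\varepsilon}(I_r,I_0).
\]
Summing over $r\geq 1$ and invoking the Energy Hypothesis \eqref{smallest condition} bounds this contribution by $2^{-\gamma t}\mathcal{F}_{\gamma,\varepsilon}^{2}\sigma(I_0)$.

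For the second piece, either $\widehat{I}_0\setminus I_0=\emptyset$ (nothing to estimate) or $I_0\subsetneq\widehat{I}_0$; in the latter case $\{J_{r,s}\}_{r,s\geq 1}$ is an $\varepsilon$-good subpartition of $I_0$ because each $J_{r,s}$ is good with $|J_{r,s}|<2^{-t}|I_0|$, so the third property of \eqref{Psi properties} now with $\widehat{I}=\widehat{I}_0$ produces
\[
\sum_{r,s\geq 1}\Phi(J_{r,s},\widehat{I}_0\setminus I_0)\leq 2^{-\gamma t}\,\Psi_{\gamma,\varepsilon}(I_0,\widehat{I}_0).
\]
The main obstacle is now to establish $\Psi_{\gamma,\varepsilon}(I_0,\widehat{I}_0)\lesssim \mathcal{F}_{\gamma,\varepsilon}^{2}\sigma(I_0)$, and this is where the stopping-time construction of Definition~\ref{d.stopping} is crucial (see Remark~\ref{r.stopping}). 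Since $\widehat{I}_0$ is the $\mathcal{S}$-parent of $I_0$ with $I_0\subsetneq\widehat{I}_0$, we have $I_0\in\mathcal{C}^{\sigma}(\widehat{I}_0)$; were $\Psi_{\gamma,\varepsilon}(I_0,\widehat{I}_0)\geq 4\mathcal{F}_{\gamma,\varepsilon}^{2}\sigma(I_0)$, a maximal interval $I'\supseteq I_0$ strictly inside $\widehat{I}_0$ satisfying \eqref{e.stoppingDef} would lie in $\mathcal{S}(\widehat{I}_0)$, contradicting either $I_0\in\mathcal{C}^{\sigma}(\widehat{I}_0)$ (if $I'=I_0$) or the minimality of $\widehat{I}_0$ as $\mathcal{S}$-parent (if $I'\supsetneq I_0$). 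Combining the two pieces delivers \eqref{e.minimal<}.
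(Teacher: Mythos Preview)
Your proof is correct and follows essentially the same approach as the paper's: apply the dual Energy estimate \eqref{e.Edual} to reduce to $\Phi(J_{r,s},\widehat{I}_0\setminus I_r)$, split into the pieces $I_0\setminus I_r$ and $\widehat{I}_0\setminus I_0$, and control each via the third line of \eqref{Psi properties} followed respectively by the Energy Hypothesis \eqref{smallest condition} and by the failure of the stopping criterion \eqref{e.stoppingDef} for $I_0$. The only cosmetic differences are that you treat the two pieces in the reverse order and spell out in more detail why \eqref{e.stoppingDef} must fail for $I_0$ when $I_0\subsetneq\widehat{I}_0$, whereas the paper simply asserts this as a consequence of $\widehat{I}_0$ being the $\mathcal{S}$-parent.
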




\begin{proof}
We apply \eqref{e.Edual}, and \eqref{smallest condition} to deduce the
Lemma. We begin with \eqref{e.Edual} to obtain%
\begin{eqnarray*}
\sum_{r,s\geq 1}\left\Vert \widetilde{\mathsf{P}}_{J_{r,s}}^{\omega }H(%
\mathbf{1}_{\widehat{I}_{0}\setminus I_{r}} \sigma )\right\Vert _{L^{2}(\omega
)}^{2} &\lesssim &\sum_{r,s\geq 1}\Phi \left( J_{r,s},\widehat{I}%
_{0}\setminus I_{r}\right)  \\
&\lesssim &\sum_{r,s\geq 1}\Phi \left( J_{r,s},\widehat{I}_{0}\setminus
I_{0}\right) +\sum_{r,s\geq 1}\Phi \left( J_{r,s},I_{0}\setminus
I_{r}\right) ,
\end{eqnarray*}%
where the last inequality follows from the definition of $\Phi $ and 
\begin{equation*}
\mathsf{P}\left( J,\mathbf{1}_{\widehat{I}_{0}\setminus I_{r}}\sigma \right)
= \mathsf{P}\left( J,\mathbf{1}_{\widehat{I}_{0}\setminus I_{0}}\sigma
\right) +\mathsf{P}\left( J,\mathbf{1}_{I_{0}\setminus I_{r}}\sigma \right) .
\end{equation*}%
If $I_{0}\neq \widehat{I}_{0}$, we estimate the sum involving $\widehat{I}%
_{0}\setminus I_{0}$ using the fact that $\left\{ J_{r,s}\right\} _{r,s\geq
1}$ is an $\varepsilon $-good subpartition of $I_{0}$ (because the intervals 
$J_{r,s}$ are good). We can thus use the third line in (\ref{Psi properties}%
), and then the fact that \eqref{e.stoppingDef} \emph{fails} when $I_{0}\neq 
\widehat{I}_{0}$, to obtain 
\begin{equation*}
\sum_{r,s\geq 1}\Phi \left( J_{r,s},\widehat{I}_{0}\setminus I_{0}\right)
\lesssim \left\{ \sup_{s\geq 1}\left( \frac{\left\vert J_{r,s}\right\vert }{%
\left\vert I_{0}\right\vert }\right) ^{\gamma }\right\} \Psi _{\gamma
,\varepsilon }\left( I_{0},\widehat{I}_{0}\right) \lesssim 2^{-\gamma t}%
\mathcal{F}_{\gamma ,\varepsilon }^{2}\sigma (I_{0}).
\end{equation*}%
Next, to estimate the sum involving $I_{0}\setminus I_{r}$, we use the fact
that $\left\{ J_{r,s}\right\} _{s\geq 1}$ is an $\varepsilon $-good
subpartition of $I_{r}$ for each $r$ (again since the intervals $J_{r,s}$
are good). We can thus use the third line in \eqref{Psi properties}, and
finally the Energy Hypothesis \eqref{smallest condition} to obtain 
\begin{equation*}
\sum_{r,s\geq 1}\Phi \left( J_{r,s},I_{0}\setminus I_{r}\right) \lesssim
\sum_{r\geq 1}\left\{ \sup_{s\geq 1}\left( \frac{\left\vert
J_{r,s}\right\vert }{\left\vert I_{r}\right\vert }\right) ^{\gamma }\right\}
\Psi _{\gamma ,\varepsilon }\left( I_{r},I_{0}\right) \lesssim 2^{-\gamma t}%
\mathcal{F}_{\gamma ,\varepsilon }^{2}\sigma (I_{0}).
\end{equation*}%
This last estimate also proves the case $I_{0}=\widehat{I}_{0}\in \mathcal{S}
$.
\end{proof}



\begin{theorem}
We have the following Carleson measure estimates for $S\in \mathcal{S}\;$and$%
\;K\in \mathcal{D}^{\sigma }$:%
\begin{align}
& \sum_{S^{\prime }\in \mathcal{S}\left( S\right) }\sigma \left( S^{\prime
}\right) \leq \frac{1}{4}\sigma \left( S\right) \text{ and }\sum_{S\in 
\mathcal{S}:S\subsetneqq K}\sigma \left( S\right) \leq \sigma \left(
K\right) ,  \label{e.fStopping} \\
& \sum_{J\in \mathcal{C}^{\omega }(S):J\subset K,2^{r}\left\vert
J\right\vert <\left\vert K\right\vert }\left\vert \left\langle H(\mathbf{1}%
_{S}\sigma ),h_{J}^{\omega }\right\rangle _{\omega }\right\vert ^{2}\lesssim
\left( \mathcal{F}_{\gamma ,\varepsilon }^{2}+\mathcal{H}^{2}\right) \sigma
\left( K\right) .  \label{e.measure1}
\end{align}
\end{theorem}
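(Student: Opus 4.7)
\medskip

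\noindent\textbf{Plan.} I will treat the two inequalities in \eqref{e.fStopping} first, as they are purely combinatorial consequences of the stopping construction, and then address the analytic Carleson estimate \eqref{e.measure1} by a case analysis on the relative position of $K$ and $S$.

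\medskip

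\noindent\emph{The stopping packing estimate \eqref{e.fStopping}.} For the first inequality, note that every $S' \in \mathcal{S}(S)$ satisfies the stopping criterion $\Psi_{\gamma,\varepsilon}(S',S) \geq 4 \mathcal{F}_{\gamma,\varepsilon}^2 \sigma(S')$ from \eqref{e.stoppingDef}, while the elements of $\mathcal{S}(S)$ are pairwise disjoint strict subintervals of $S$ and thus form a subpartition of $S$. The Energy Hypothesis \eqref{smallest condition} applied to this subpartition yields $\sum_{S' \in \mathcal{S}(S)} \Psi_{\gamma,\varepsilon}(S',S) \leq \mathcal{F}_{\gamma,\varepsilon}^2 \sigma(S)$, and dividing by $4\mathcal{F}_{\gamma,\varepsilon}^2$ gives the claimed $\tfrac{1}{4}\sigma(S)$ bound. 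The second inequality follows by a standard tree-packing / geometric-series argument: let $\mathcal{S}_1(K)$ denote the maximal members of $\{S \in \mathcal{S}:S \subsetneqq K\}$, which are pairwise disjoint in $K$ and hence sum to at most $\sigma(K)$; iterating the first inequality down the $\mathcal{S}$-tree rooted at each $S \in \mathcal{S}_1(K)$ produces a geometric series with ratio $\tfrac14$, whose total packs into the initial $\sigma(K)$ mass.

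\medskip

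\noindent\emph{The Carleson estimate \eqref{e.measure1}: easy cases.} Since $J \in \mathcal{C}^{\omega}(S)$ forces $J \subset S$, the sum in \eqref{e.measure1} is empty unless $S \cap K \neq \emptyset$, so $S$ and $K$ are nested. If $S \subseteq K$, then every admissible $J$ lies in $S \subseteq K$, and Bessel's inequality for the orthonormal system $\{h_J^\omega\}_{J \subset S}$ combined with the testing hypothesis \eqref{e.H1} gives
\begin{equation*}
\sum_J \bigl| \langle H(\mathbf{1}_S\sigma), h_J^\omega \rangle_\omega \bigr|^2 \leq \int_S |H(\mathbf{1}_S\sigma)|^2\, d\omega \leq \mathcal{H}^2 \sigma(S) \leq \mathcal{H}^2 \sigma(K).
\end{equation*}

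\medskip

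\noindent\emph{The Carleson estimate \eqref{e.measure1}: the hard case $K \subsetneqq S$.} Split
\begin{equation*}
H(\mathbf{1}_S\sigma) = H(\mathbf{1}_K\sigma) + H(\mathbf{1}_{S \setminus K}\sigma).
\end{equation*}
The piece with $\mathbf{1}_K\sigma$ is again handled by Bessel plus the testing condition on $K$, yielding $\mathcal{H}^2 \sigma(K)$. For the piece with $\mathbf{1}_{S \setminus K}\sigma$, the constraint $2^r|J| < |K|$ together with $J \subset K$ and goodness of $J$ guarantees $\operatorname{dist}(\partial K, J) \geq |J|$, so the Energy Lemma \ref{energy lemma} applies with $I' = K$ and $\widehat{I} = S$, producing the pointwise bound $|\langle H(\mathbf{1}_{S\setminus K}\sigma), h_J^\omega \rangle_\omega|^2 \lesssim \Phi(J, S \setminus K)$. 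The remaining task, and the genuine obstacle, is to sum these $\Phi$-values over the hierarchical (not disjoint) family $\mathcal{J}$ of admissible $J$'s to obtain $\mathcal{F}_{\gamma,\varepsilon}^2 \sigma(K)$. I would do this by first passing to the maximal elements $\{J_0\}$ of $\mathcal{J}$, regrouping the Bessel sum as $\sum_{J_0} \| \widetilde{\mathsf{P}}_{J_0}^\omega H(\mathbf{1}_{S\setminus K}\sigma) \|_{L^2(\omega)}^2$, applying the dual Energy Estimate \eqref{e.Edual} to each term to obtain $\sum_{J_0} \Phi(J_0, S \setminus K)$, and finally invoking the third line of \eqref{Psi properties} with $I_0 = K$ and $\widehat{I} = S$: the $J_0$'s form an $\varepsilon$-good subpartition of $K$, so the off-diagonal inequality yields a bound by $\sup_{J_0}(|J_0|/|K|)^\gamma\, \Psi_{\gamma,\varepsilon}(K, S)$. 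The supremum is harmlessly bounded by $1$, and $\Psi_{\gamma,\varepsilon}(K,S) \lesssim \mathcal{F}_{\gamma,\varepsilon}^2 \sigma(K)$ precisely because $K$ is not a member of $\mathcal{S}(\widehat{K})$—the stopping criterion \eqref{e.stoppingDef} \emph{fails} at $K$, which is exactly the mechanism Lemma~\ref{l.minimal} exploits and which carries over here.
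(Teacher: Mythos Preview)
Your proof is correct and follows essentially the same route as the paper: the packing estimate via the stopping criterion plus the Energy Hypothesis, and the Carleson bound via the split $\mathbf{1}_S=\mathbf{1}_K+\mathbf{1}_{S\setminus K}$, testing condition on the first piece, and on the second piece passing to maximal $J_0$'s, applying the dual Energy estimate \eqref{e.Edual}, and then the third line of \eqref{Psi properties} together with the failure of the stopping criterion at $K$. The only cosmetic difference is that the paper packages the last step as an appeal to Lemma~\ref{l.minimal}, whereas you unwind that lemma inline; and you might make explicit (as the paper does implicitly) that the $\mathcal{S}$-parent of $K$ is $S$ itself, which is what makes the failure of \eqref{e.stoppingDef} read as $\Psi_{\gamma,\varepsilon}(K,S)<4\mathcal{F}_{\gamma,\varepsilon}^2\sigma(K)$.
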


\begin{remark}
\label{r.boxes} The Corona Decomposition and this last estimate can be
compared to a general strategy for proving Carleson measure estimates. The
Corona Decomposition is reminiscent of the sets in \eqref{e.wS1}; the
condition \eqref{e.fStopping} can be compared to \eqref{e.wS}; and the
condition \eqref{e.measure1} can be compared to \eqref{e.wS1}.
\end{remark}

\begin{proof}[Proof of \eqref{e.fStopping}.]
Concerning the second inequality in \eqref{e.fStopping}, 
as is well known, it suffices to verify it for $%
K=S_{0}\in \mathcal{S}$. And this case follows from the recursive
application of the estimate first half of \eqref{e.fStopping} to 
the interval $S_{0}$ and all of its
children in $\mathcal{S}$. 

So we turn to the first half of \eqref{e.fStopping}. 
The intervals in the collection $\mathcal{S}%
(S_{0})=\{S_{r}\;:\;r\geq 1\}$ given in Definition \ref{d.stopping} are
pairwise disjoint and strictly contained in $I_{0}$. Each of them 
satisfies 
\eqref{e.stoppingDef}, so  we can apply \eqref{smallest condition} to see that 
\begin{align}
\sum_{S\in \mathcal{S}(S_{0})}\sigma (S)& =\sum_{r\geq 1}\sigma (S_{r})
\label{e.14} \\
& \leq \frac{1}{4\mathcal{F}_{\gamma,\varepsilon}^{2}}\sum_{r\geq 1}\Psi _{\gamma
,\varepsilon }\left( S_{r},S_{0}\right) \leq \tfrac{1}{4}\sigma (S_{0})\,.
\end{align}
\end{proof}

\begin{proof}[Proof of \eqref{e.measure1}.]
Fix $S\in \mathcal{S}$ and $K$, which we can assume is a subset of $S$. If
we apply the Hilbert transform to $\sigma \mathbf{1}_{K}$, as opposed to $%
\sigma \mathbf{1}_{S}$, we have by \eqref{Plancherel} for $\omega $ and %
\eqref{e.H1}, 
\begin{equation*}
\sum_{\substack{ {J\in \mathcal{C}^{\omega }(S)}  \\ J\subset K,\,\lvert
J\rvert <2^{-r}\lvert K\rvert }}\left\vert \left\langle H(\mathbf{1}%
_{K}\sigma ),h_{J}^{\omega }\right\rangle _{\omega }\right\vert ^{2}\leq
\int_{K}\left\vert H(\mathbf{1}_{K}\sigma )\right\vert ^{2}\;\omega (dx)\leq 
\mathcal{H}^{2}\sigma (K)\,.
\end{equation*}%
And so we consider the Hilbert transform applied to $\sigma \mathbf{1}%
_{S\backslash K}$, and show 
\begin{equation}
\sum_{\substack{ {J\in \mathcal{C}^{\omega }(S)}  \\ J\subset K,\,\lvert
J\rvert <2^{-r}\lvert K\rvert }}\left\vert \left\langle H(\mathbf{1}%
_{S\backslash K}\sigma ),h_{J}^{\omega }\right\rangle _{\omega }\right\vert
^{2}\lesssim \mathcal{F}_{\gamma ,\varepsilon }^{2}\sigma (K)\,.
\label{e.m1}
\end{equation}

We can assume that $K\subsetneq S$, and that there is some $J\in \mathcal{C}%
^{\omega }(S) $ with $J \subset K $. From this we see that $K$ was \emph{not}
a stopping interval. That is, the interval $K$ \emph{must} fail %
\eqref{e.stoppingDef}.

Let $\mathcal{J}$ denote the maximal intervals $J\in \mathcal{C}^{\omega }(S)
$ with $J\subset K$ and $\lvert J\rvert <2^{-r}\lvert K\rvert $. Using the
notation of \eqref{e.Ptilde}, we can use \eqref{e.minimal<}, with $I^{\prime }=K
$, $\widehat{I}=S$, and $J\in \mathcal{J}$. It gives us 
\begin{equation*}
\sum_{J\in \mathcal{J}}\lVert \widetilde{\mathsf{P}}_{J}^{\omega }H(\mathbf{1%
}_{S\backslash K}\sigma )\rVert _{L^{2}(\omega )}^{2}\lesssim \sum_{J\in 
\mathcal{J}}\Phi \left( J,S\setminus K\right) \lesssim \mathcal{F}_{\gamma
,\varepsilon }^{2}\sigma (K)\,.
\end{equation*}%
The second inequality uses the fact that $K$ fails \eqref{e.stoppingDef}.
This proves \eqref{e.m1}.
\end{proof}

The following Carleson measure estimate,  along with 
\S \ref{s.43} and \S \ref{s.61},
are the three places where the Energy Hypothesis is used in this proof: It
will provide the decay in the the parameter $t$ in \eqref{e.miraculous}. For
all integers $t\geq 0$, we define for $S\in \mathcal{S}$, which are not
maximal, 
\begin{equation}
\alpha _{t}(S)\equiv \sum_{S^{\prime }:\pi _{\mathcal{S}}^{t}(S^{\prime
})=S}\left\Vert \mathsf{P}_{S^{\prime }}^{\omega }H(\sigma \mathbf{1}_{\pi _{%
\mathcal{S}}^{1}\left( S\right) \setminus S})\right\Vert _{L^{2}(\omega
)}^{2}  \label{e.alphat}
\end{equation}%
Here, we are taking the projection $H(\sigma \mathbf{1}_{\pi _{\mathcal{S}%
}^{1}\left( S\right) \backslash S})$ associated to parts of the Corona
decomposition which are `far below' $S$. We have this off-diagonal estimate.


\begin{theorem}
\label{t.miraculous} The following Carleson measure estimate holds: 
\begin{equation}
\sum_{S\in \mathcal{S}\;:\;\pi _{\mathcal{D}^{\sigma }}^{1}\left( S\right)
\subset K}\alpha _{t}(S)\lesssim 2^{-\gamma t}\mathcal{F}_{\gamma
,\varepsilon }^{2}\sigma (K)\,,\qquad K\in \mathcal{D}^{\sigma }.
\label{e.miraculous}
\end{equation}%
The implied constant is independent of the choice of interval $K$ and $t\geq
1$.
\end{theorem}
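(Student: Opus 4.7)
The plan is to partition the sum over $S$ according to the $\mathcal{S}$-parent $\widehat{S}=\pi_{\mathcal{S}}^{1}(S)$, apply Lemma~\ref{l.minimal} within each group so that the annulus $\widehat{I}_{0}\setminus I_{r}$ produced by the lemma coincides with the annulus $\widehat{S}\setminus S$ appearing inside $\alpha_{t}(S)$, and extract the factor $2^{-\gamma t}$ from a chain-length observation. The key point is that if $S'\in\mathcal{S}$ satisfies $\pi_{\mathcal{S}}^{t}(S')=S$, the chain $S'\subsetneq \pi_{\mathcal{S}}^{1}(S')\subsetneq\cdots\subsetneq S$ consists of distinct intervals of $\mathcal{D}^{\sigma}$, so sizes at least double at each step, forcing $|S'|\le 2^{-t}|S|$. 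Combined with the defining property $|J|<2^{-r}|S'|$ of $\mathcal{C}^{\omega}(S')$ (where $r$ is the goodness parameter of Definition~\ref{d.good}), this yields $|J|<2^{-(r+t)}|S|$. I will also use, by Plancherel, the pointwise domination $\|\mathsf{P}_{S'}^{\omega}g\|_{L^{2}(\omega)}^{2}\le\sum_{J_{0}\in\mathcal{J}(S')}\|\widetilde{\mathsf{P}}_{J_{0}}^{\omega}g\|_{L^{2}(\omega)}^{2}$, where $\mathcal{J}(S')$ denotes the maximal elements of $\mathcal{C}^{\omega}(S')$.

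I would then partition $\sum_{S\,:\,\pi_{\mathcal{D}^{\sigma}}^{1}(S)\subset K}\alpha_{t}(S)$ by $\widehat{S}$ into two cases. In Case~A, $\widehat{S}\subseteq K$: I apply Lemma~\ref{l.minimal} with $I_{0}=\widehat{I}_{0}=\widehat{S}$, the partition $\{I_{r}\}=\mathcal{S}(\widehat{S})$, and, for each $I_{r}=S$, with $\{J_{r,s}\}=\bigcup_{S'\,:\,\pi_{\mathcal{S}}^{t}(S')=S}\mathcal{J}(S')$. These $J_{r,s}$ form a good sub-$\mathcal{D}^{\omega}$-partition of $I_{r}$, and the chain estimate gives $|J_{r,s}|<2^{-(r+t)}|I_{r}|$. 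Since $\widehat{I}_{0}\setminus I_{r}=\widehat{S}\setminus S$ matches the set inside $\alpha_{t}(S)$, the lemma (with its internal parameter set to $r+t$) produces
\[
\sum_{S\in\mathcal{S}(\widehat{S})}\alpha_{t}(S)\ \lesssim\ 2^{-\gamma(r+t)}\mathcal{F}_{\gamma,\varepsilon}^{2}\,\sigma(\widehat{S}).
\]
Summing over $\widehat{S}\in\mathcal{S}$ with $\widehat{S}\subseteq K$ and invoking the Carleson packing \eqref{e.fStopping} bounds Case~A by $2^{-\gamma t}\mathcal{F}_{\gamma,\varepsilon}^{2}\,\sigma(K)$.

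In Case~B, $\widehat{S}\supsetneq K$; by minimality of $\widehat{S}$ this forces $\widehat{S}=\overline{K}$, the unique smallest element of $\mathcal{S}$ strictly containing $K$ (nontrivial only when $K\notin\mathcal{S}$). I apply Lemma~\ref{l.minimal} with $I_{0}=K$, $\widehat{I}_{0}=\overline{K}$, and $\{I_{r}\}$ the subcollection of $\mathcal{S}(\overline{K})$ consisting of those $S\subsetneq K$. For such $S$ one has $\widehat{S}=\overline{K}=\widehat{I}_{0}$, so the annulus matches once again, and the same size estimate gives $|J_{r,s}|<2^{-(r+t)}|I_{r}|$. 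The lemma then yields
\[
\sum_{S\in\mathcal{S}(\overline{K}),\,S\subsetneq K}\alpha_{t}(S)\ \lesssim\ 2^{-\gamma(r+t)}\mathcal{F}_{\gamma,\varepsilon}^{2}\,\sigma(K).
\]
Combining the two cases and absorbing the factor $2^{-\gamma r}$ into the implied constant completes the proof.

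The main obstacle is the chain-length observation: recognizing that stopping-tree depth $t$ transfers into geometric size decay $|S'|\le 2^{-t}|S|$, which is what allows Lemma~\ref{l.minimal} to be fed with its internal parameter $r+t$ rather than just $r$---without this, only the trivial goodness gain $2^{-\gamma r}$ would be extracted. A subsidiary delicate point is the case split: one must choose $I_{0}=K$ (not $I_{0}=\overline{K}$) in Case~B so that the right-hand side features $\sigma(K)$ rather than the potentially much larger $\sigma(\overline{K})$, while still maintaining $\widehat{I}_{0}\setminus I_{r}=\widehat{S}\setminus S$.
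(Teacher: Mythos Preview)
Your proof is correct and follows essentially the same approach as the paper's. Both arguments hinge on the chain-length observation $|S'|\le 2^{-t}|S|$ (which the paper leaves implicit when invoking the third line of \eqref{Psi properties}), both reduce $\alpha_t(S)$ to sums of $\|\widetilde{\mathsf P}_{J}^{\omega}H(\mathbf 1_{\widehat S\setminus S}\sigma)\|^2$ over maximal $J$'s, and both handle the ``first generation'' inside $K$ (your Case~B, the paper's $\mathcal G_1$) via Lemma~\ref{l.minimal} with $I_0=K$ to land on $\sigma(K)$ rather than $\sigma(\overline K)$. Your presentation is slightly more streamlined: you invoke Lemma~\ref{l.minimal} uniformly (including in the case $I_0=\widehat I_0=\widehat S$, which the lemma's proof explicitly covers), whereas the paper re-derives that special case inline from \eqref{Psi properties} and then runs a generational summation that amounts to your direct appeal to \eqref{e.fStopping}.
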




\begin{remark}
\label{r.payAttention} In the estimate \eqref{e.miraculous}, we draw
attention to the fact that the dyadic parent $\pi _{\mathcal{D}^{\sigma
}}^{1}\left( S\right) $ of $S$ appears. Similar conditions will arise below,
and it is essential to track them as the measures we are dealing with are 
\emph{not} doubling. In fact, the role of the dyadic parents is revealed in
the next proof: Use the negation of \eqref{e.stoppingDef} when $\pi _{%
\mathcal{D}^{\sigma }}^{1}\left( S\right) \notin \mathcal{S}$, and otherwise
use the Energy Condition.
\end{remark}



\begin{proof}
Our first task is to show that%
\begin{equation*}
\sum_{S\in \mathcal{S}\left( \widehat{S}\right) }\alpha _{t}(S)\leq
2^{-\gamma t}\mathcal{F}_{\gamma ,\varepsilon }^{2}\sigma (\widehat{S}),\ \
\ \ \ \widehat{S}\in \mathcal{S}.
\end{equation*}%
For the purposes of this proof, we will set $\mathcal{S}_{t}(S)=\{S^{\prime
}\in \mathcal{S}\;:\;\pi _{\mathcal{S}}^{t}(S^{\prime })=S\}$, using this
notation for $S\in \mathcal{S}(\widehat{S})$. We want to apply %
\eqref{e.Edual} to the expressions $\alpha _{t}$. To this end define%
\begin{equation}
\mathcal{J}(S^{\prime })\equiv \left\{ J\in \mathcal{C}^{\omega }(S)\;:\;J%
\text{ is maximal w.r.t. }J\subset S^{\prime },\left\vert J\right\vert
<2^{-r}\left\vert S^{\prime }\right\vert \right\} .  \label{e.maxJ}
\end{equation}%
It follows by definition that we have $\lvert J\rvert <2^{-r}\lvert
S^{\prime }\rvert $ for all $J\in \mathcal{J}(S^{\prime })$. And, as all
Haar functions have mean zero, we can apply \eqref{e.Edual}. From this, we
see that 
\begin{equation*}
\alpha _{t}(S)\lesssim \sum_{S^{\prime }\in \mathcal{S}_{t}(S)}\sum_{J\in 
\mathcal{J}(S^{\prime })}\Phi \left( J,\widehat{S}\setminus S\right) ,
\end{equation*}%
and so by the third line in \eqref{Psi properties},%
\begin{eqnarray}
\sum_{S\in \mathcal{S}(\widehat{S})}\alpha _{t}(S) &\lesssim &\sum_{S\in 
\mathcal{S}(\widehat{S})}\sum_{S^{\prime }\in \mathcal{S}_{t}(S)}\sum_{J\in 
\mathcal{J}(S^{\prime })}\Phi (J,\widehat{S}\setminus S)
\label{alpha control} \\
&\lesssim &2^{-t\gamma }\sum_{S\in \mathcal{S}(\widehat{S})}\Psi (S,\widehat{%
S})\lesssim 2^{-t\gamma }\mathcal{F}_{\gamma ,\varepsilon }^{2}\sigma (%
\widehat{S}),  \notag
\end{eqnarray}%
where the final inequality follows from the assumed Energy Hypothesis (\ref%
{smallest condition}).

Now fix $K$ as in \eqref{e.miraculous} and let $\widehat{S}\in \mathcal{S}$
be the stopping interval such that $K\in \mathcal{C}^{\sigma }(\widehat{S})$%
. Let $\mathcal{G}_{1}\equiv \left\{ S_{i}\right\} _{i}$ be the maximal
intervals from $\mathcal{S}$ that are strictly contained in $K$. Inductively
define the $\left( k+1\right) ^{st}$ generation $\mathcal{G}_{k+1}$ to
consist of the maximal intervals from $\mathcal{S}$ that are strictly
contained in some $k^{th}$ generation interval $S\in \mathcal{G}_{k}$.
Inequality \eqref{alpha control} shows that%
\begin{equation*}
\sum_{S\in \mathcal{G}_{k+1}}\alpha _{t}(S)\lesssim 2^{-t\gamma }\mathcal{F}%
_{\gamma ,\varepsilon }^{2}\sum_{S\in \mathcal{G}_{k}}\sigma \left( S\right)
.
\end{equation*}%
We also have from \eqref{e.fStopping} that%
\begin{equation*}
\sum_{k=1}^{\infty }\sum_{S\in \mathcal{G}_{k}}\sigma \left( S\right)
\lesssim \sum_{S\in \mathcal{G}_{1}}\sigma \left( S\right) \leq \sigma
\left( K\right) .
\end{equation*}

This will be all we need in the case $K=\widehat{S}$, but when $K\neq 
\widehat{S}$, we will use Lemma \ref{l.minimal} to control the first
generation intervals $S$ in $\mathcal{G}_{1}$:%
\begin{equation*}
\sum_{S\in \mathcal{G}_{1}}\alpha _{t}(S)\lesssim 2^{-t\eta }\sigma \left(
K\right) .
\end{equation*}%
Indeed, we simply apply Lemma \ref{l.minimal} with $\widehat{I}_{0}=\widehat{%
S}$, $I_{0}=K$, $\left\{ I_{r}\right\} _{r\geq 1}=\mathcal{G}_{1}$, and $%
\left\{ J_{r,s}\right\} _{s\geq 1}=\bigcup_{S^{\prime }\in \mathcal{S}%
_{t}(S)}\mathcal{J}(S^{\prime })$.

\smallskip

When $K\neq \widehat{S}$ we finish with%
\begin{eqnarray*}
\sum_{S\in \mathcal{S}\;:\;\pi _{\mathcal{D}^{\sigma }}^{1}\left( S\right)
\subset K}\alpha _{t}(S) &=&\sum_{S\in \mathcal{G}_{1}}\alpha
_{t}(S)+\sum_{k=1}^{\infty }\sum_{S\in \mathcal{G}_{k+1}}\alpha _{t}(S) \\
&\lesssim &2^{-t\eta }\sigma \left( K\right) +2^{-t\eta }\mathcal{F}_{\gamma
,\varepsilon }^{2}\sum_{k=1}^{\infty }\sum_{S\in \mathcal{G}_{k}}\sigma
\left( S\right)  \\
&\lesssim &2^{-t\gamma }\mathcal{F}_{\gamma ,\varepsilon }^{2}\sigma \left(
K\right) ,
\end{eqnarray*}%
and when $K=\widehat{S}$ we set $\mathcal{G}_{0}=\{\widehat{S}\}$ and
estimate 
\begin{align*}
\sum_{S\in \mathcal{S}\;:\;\pi _{\mathcal{D}^{\sigma }}^{1}\left( S\right)
\subset \widehat{S}}\alpha _{t}(S)=\sum_{k=0}^{\infty }\sum_{S\in \mathcal{G}%
_{k+1}}\alpha _{t}(S)& \lesssim 2^{-t\gamma }\mathcal{F}_{\gamma
,\varepsilon }^{2}\sum_{k=0}^{\infty }\sum_{S\in \mathcal{G}_{k}}\sigma
\left( S\right)  \\
& \lesssim 2^{-t\gamma }\mathcal{F}_{\gamma ,\varepsilon }^{2}\sigma (%
\widehat{S}).
\end{align*}
\end{proof}


We need a Carleson measure estimate that is a common variant of %
\eqref{e.fStopping} and \eqref{e.miraculous}. Define 
\begin{equation}
\beta (S)\equiv \left\Vert \mathsf{P}_{S}^{\omega }H(\sigma \mathbf{1}_{\pi
_{\mathcal{D}^{\sigma }}^{1}(S)})\right\Vert _{L^{2}(\omega )}^{2}\,.
\label{e.betat}
\end{equation}


\begin{theorem}
\label{t.betat} We have the Carleson measure estimate 
\begin{equation}
\sum_{S\in \mathcal{S}\;:\;\pi _{\mathcal{D}^{\sigma }}^{1}(S)\subset
K}\beta (S)\lesssim (\mathcal{H}^{2}+\mathcal{F}_{\gamma ,\varepsilon
}^{2})\sigma (K)  \label{e.betat<}
\end{equation}
\end{theorem}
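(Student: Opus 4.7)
The plan is to decompose $H(\sigma\mathbf{1}_{\pi_{\mathcal{D}^\sigma}^1(S)}) = H(\sigma\mathbf{1}_S) + H(\sigma\mathbf{1}_{\pi_{\mathcal{D}^\sigma}^1(S)\setminus S})$ and bound the two contributions to $\beta(S)$ separately: the first by the testing hypothesis $\mathcal{H}$ and the second by the Energy Hypothesis $\mathcal{F}_{\gamma,\varepsilon}$, in the spirit of the proofs of \eqref{e.measure1} and Theorem~\ref{t.miraculous}. Throughout the argument, $\widehat{S}\in\mathcal{S}$ will denote the $\mathcal{S}$-parent of $S$, and we will exploit the containment $\pi_{\mathcal{D}^\sigma}^1(S)\subseteq\widehat{S}$.

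For the first summand, every Haar function $h_J^\omega$ with $J\in\mathcal{C}^\omega(S)$ is supported in $S$, so Bessel's inequality and the testing condition \eqref{e.H1} give
\[
\|\mathsf{P}^\omega_S H(\sigma\mathbf{1}_S)\|^2_{L^2(\omega)}\leq \int_S |H(\sigma\mathbf{1}_S)|^2\,d\omega \leq \mathcal{H}^2\sigma(S).
\]
Summing over those $S\in\mathcal{S}$ with $\pi_{\mathcal{D}^\sigma}^1(S)\subset K$ and invoking the second inequality of \eqref{e.fStopping} yields the contribution $\mathcal{H}^2\sigma(K)$.

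For the second summand, let $\mathcal{J}(S)$ denote the maximal members of $\mathcal{C}^\omega(S)$. Since each $J\in\mathcal{C}^\omega(S)$ is good with $|J|<2^{-r}|S|$, $\mathcal{J}(S)$ is an $\varepsilon$-good subpartition of $S$. By orthogonality of the Haar system and the dual form \eqref{e.Edual} of the Energy Lemma,
\[
\|\mathsf{P}^\omega_S H(\sigma\mathbf{1}_{\pi_{\mathcal{D}^\sigma}^1(S)\setminus S})\|^2\leq \sum_{J\in\mathcal{J}(S)}\|\widetilde{\mathsf{P}}_J^\omega H(\sigma\mathbf{1}_{\pi_{\mathcal{D}^\sigma}^1(S)\setminus S})\|^2\lesssim \sum_{J\in\mathcal{J}(S)}\Phi(J,\pi_{\mathcal{D}^\sigma}^1(S)\setminus S).
\]
Since $\pi_{\mathcal{D}^\sigma}^1(S)\subseteq\widehat{S}$, monotonicity of $\Phi$ in its second argument allows us to replace $\pi_{\mathcal{D}^\sigma}^1(S)\setminus S$ by $\widehat{S}\setminus S$ on the right.

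To finish, I split the resulting sum over $S$ according to whether $\widehat{S}\subseteq K$ or $\widehat{S}\supsetneq K$, paralleling the generations argument in the proof of Theorem~\ref{t.miraculous}. When $\widehat{S}\subseteq K$, the third line of \eqref{Psi properties} applied with $I_0=S$, $\widehat{I}=\widehat{S}$ and the $\varepsilon$-good subpartition $\mathcal{J}(S)$ of $S$ bounds the inner sum by $\Psi_{\gamma,\varepsilon}(S,\widehat{S})$; the Energy Hypothesis \eqref{smallest condition} then yields $\sum_{S\in\mathcal{S}(\widehat{S})}\Psi_{\gamma,\varepsilon}(S,\widehat{S})\leq\mathcal{F}^2_{\gamma,\varepsilon}\sigma(\widehat{S})$, and a final summation over such $\widehat{S}\subseteq K$ via \eqref{e.fStopping} contributes $\mathcal{F}^2_{\gamma,\varepsilon}\sigma(K)$. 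The remaining $S$ --- those whose $\mathcal{S}$-parent strictly contains $K$ --- are exactly the maximal $\mathcal{S}$-intervals inside $K$, all with the common $\mathcal{S}$-parent (the $\mathcal{S}$-parent of $K$ itself), and a single application of Lemma~\ref{l.minimal} with $I_0=K$ and $\{I_r\}$ this finite family handles them directly with the bound $\mathcal{F}^2_{\gamma,\varepsilon}\sigma(K)$. The main technical obstacle is the verification that $\mathcal{J}(S)$ is an $\varepsilon$-good subpartition of $S$ so that the third line of \eqref{Psi properties} applies; this reduces to the goodness of each $J\in\mathcal{C}^\omega(S)$ combined with the size restriction $|J|<2^{-r}|S|$ built into the definition of $\mathcal{C}^\omega(S)$.
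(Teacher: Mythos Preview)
Your proof is correct and follows essentially the same approach as the paper: the same splitting $\beta(S)\lesssim\beta_1(S)+\beta_2(S)$ with $\beta_1$ handled by the testing condition plus \eqref{e.fStopping}, and $\beta_2$ handled via the Energy Lemma, Lemma~\ref{l.minimal}, and the third line of \eqref{Psi properties}. The only organizational difference is that the paper introduces an intermediate family $\mathcal{T}$ of maximal intervals of the form $\pi_{\mathcal{D}^\sigma}^1(S)\subsetneq K$ and applies Lemma~\ref{l.minimal} once to that top layer before invoking \eqref{e.fStopping} for the deeper $S$, whereas you split directly on whether $\widehat{S}\subseteq K$ or $\widehat{S}\supsetneq K$; both routes reduce to the same ingredients.
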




\begin{proof}
Using the decomposition $\pi _{\mathcal{D^{\sigma }}}^{1}(S)=S\cup \{\pi _{%
\mathcal{D}^{\sigma }}^{1}(S)\backslash S\}$, we write $\beta (S)\leq
2(\beta _{1}(S)+\beta _{2}(S))$ where 
\begin{align*}
\beta _{1}(S)& \equiv \left\Vert \mathsf{P}_{S}^{\omega }H(\sigma \mathbf{1}%
_{S})\right\Vert _{L^{2}(\omega )}^{2}\,, \\
\beta _{2}(S)& \equiv \left\Vert \mathsf{P}_{S}^{\omega }H(\sigma \mathbf{1}%
_{\pi _{\mathcal{D}^{\sigma }}^{1}(S)\backslash S})\right\Vert
_{L^{2}(\omega )}^{2}\,.
\end{align*}%
We certainly have $\beta _{1}(S)\leq \mathcal{H}\sigma (S)$, so that by %
\eqref{e.fStopping}, we need only consider the Carleson measure norm of the
terms $\beta _{2}(S)$.

Fix an interval $K$ of the form $K=\pi _{\mathcal{D}}^{1}(S_{0})$ for some $%
S_{0}\in \mathcal{S}$. Let $\mathcal{T}$ be the maximal intervals of the
form $\pi _{\mathcal{D}}^{1}(S)\subsetneq K$, and for $T\in \mathcal{T}$,
let $\mathcal{S}(T)$ be all intervals $S\in \mathcal{S}$ with $S\subset T$
and $S$ is maximal. Using the notation of \eqref{e.maxJ} and \eqref{e.Ptilde}%
, we can estimate 
\begin{align*}
\sum_{T\in \mathcal{T}}\sum_{S\in \mathcal{S}(T)}\beta _{2}(S)& \lesssim
\sum_{T\in \mathcal{T}}\sum_{S\in \mathcal{S}(T)}\sum_{J\in \mathcal{J}%
(S)}\left\Vert \widetilde{\mathsf{P}}_{J}^{\omega }H(\sigma \mathbf{1}_{\pi
_{\mathcal{D}^{\sigma }}^{1}(S)\backslash S})\right\Vert _{L^{2}(\omega
)}^{2} \\
& \lesssim \mathcal{F}_{\gamma ,\varepsilon }^{2}\sigma (K)
\end{align*}%
Here, we have have been careful to arrange the collections $\mathcal{T}$, $%
\mathcal{S}(T)$ and $\mathcal{J}(S)$ so that \eqref{e.minimal<} applies.

We argue that this inequality is enough to conclude the Lemma. Suppose that $%
S^{\prime }\in \mathcal{S}$, with $S^{\prime }\subset K$, but $S^{\prime }$
is not in any collection $\mathcal{S}(T)$ for $T\in \mathcal{T}$. It follows
that $S^{\prime }\subsetneq S$ for some $S\in \mathcal{S}(T)$ and $T\in 
\mathcal{T}$. This implies that the Carleson measure estimate %
\eqref{e.fStopping} will conclude the proof.
\end{proof}


A last Carleson measure estimate needed arises from the quantities 
\begin{equation}
\gamma (S)\equiv \left\Vert \mathsf{P}_{S}^{\omega }H(\mathbf{1}_{\pi _{%
\mathcal{S}}^{1}(S)\backslash \pi _{\mathcal{D}^{\sigma }}^{1}(S)}\sigma
)\right\Vert _{L^{2}(\omega )}^{2}  \label{e.gammat}
\end{equation}


\begin{theorem}
\label{t.gammat} We have the estimate 
\begin{equation}
\sum_{S\in \mathcal{S}\;:\;\pi _{\mathcal{D}^{\sigma }}^{1}(S)\subset
K}\gamma (S)\lesssim \mathcal{F}_{\gamma ,\varepsilon }^{2}\sigma (K)\,.
\label{e.gammat<}
\end{equation}
\end{theorem}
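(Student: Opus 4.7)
The plan is to follow the template of the Carleson estimates for $\alpha_t(S)$ and $\beta(S)$, exploiting that the integration region $\pi_\mathcal{S}^1(S)\setminus\pi_{\mathcal{D}^\sigma}^1(S)$ stays outside the dyadic parent of $S$, so that only the Energy Lemma (and not testing) is needed. Write $\widehat{S}=\pi_{\mathcal{S}}^{1}(S)$ and $K_{S}=\pi_{\mathcal{D}^{\sigma}}^{1}(S)$, and let $\mathcal{J}(S)$ be the maximal elements of $\mathcal{C}^{\omega}(S)$ as in \eqref{e.maxJ}. Each $J\in\mathcal{J}(S)$ is good with $|J|<2^{-r}|S|\le 2^{-r-1}|K_S|$, so the $L^{2}$-form \eqref{e.Edual} of the Energy Lemma, applied with $I'=K_S$ and $\widehat{I}=\widehat{S}$, gives
\[
 \gamma(S)\;\le\;\sum_{J\in\mathcal{J}(S)}\bigl\|\widetilde{\mathsf{P}}_{J}^{\omega}H(\mathbf{1}_{\widehat{S}\setminus K_S}\sigma)\bigr\|_{L^{2}(\omega)}^{2}\;\lesssim\;\sum_{J\in\mathcal{J}(S)}\Phi\!\bigl(J,\widehat{S}\setminus K_S\bigr).
\]

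Next I would organize the sum over $S$ with $K_S\subseteq K$ by stopping parent $\widehat{S}$. The nesting of the dyadic and stopping trees forces a dichotomy: either (a)~$\widehat{S}\subseteq K$, or (b)~$\widehat{S}=\widehat{K}$, where $\widehat{K}$ denotes the minimal element of $\mathcal{S}$ strictly containing $K$ (case (b) occurring only when $K\notin\mathcal{S}$, in which case the minimality of $\widehat{S}=\pi_{\mathcal{S}}^{1}(S)$ makes $\widehat{K}$ unique).

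For case (a), I would use monotonicity $\Phi(J,\widehat{S}\setminus K_S)\le\Phi(J,\widehat{S}\setminus S)$ and apply the third line of \eqref{Psi properties} with $I_0=S$, $\widehat{I}=\widehat{S}$ to get $\sum_{J}\Phi(J,\widehat{S}\setminus S)\le 2^{-r\gamma}\Psi_{\gamma,\varepsilon}(S,\widehat{S})$; summing over the subpartition $\{S:S\in\mathcal{S}(\widehat{S})\}$ of $\widehat{S}$ via the Energy Hypothesis \eqref{smallest condition} yields $\lesssim 2^{-r\gamma}\mathcal{F}_{\gamma,\varepsilon}^{2}\sigma(\widehat{S})$, and the Carleson estimate \eqref{e.fStopping} controls $\sum_{\widehat{S}\subseteq K}\sigma(\widehat{S})\lesssim\sigma(K)$.

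Case (b) is the main obstacle; here one cannot simply replace $\widehat{K}$ by $K$ in the region, because the relevant $K_S$'s are nested dyadic intervals in $K$ and $\sum\sigma(K_S)$ need not be controlled by $\sigma(K)$. The trick is to split $\mathbf{1}_{\widehat{K}\setminus K_S}=\mathbf{1}_{\widehat{K}\setminus K}+\mathbf{1}_{K\setminus K_S}$, giving $\Phi(J,\widehat{K}\setminus K_S)\le 2\Phi(J,\widehat{K}\setminus K)+2\Phi(J,K\setminus K_S)$. The outer piece is handled collectively: the union $\bigcup_{S}\mathcal{J}(S)$ is an $\varepsilon$-good subpartition of $K$, so the third line of \eqref{Psi properties} with $I_0=K$, $\widehat{I}=\widehat{K}$ bounds the total by $2^{-r\gamma}\Psi_{\gamma,\varepsilon}(K,\widehat{K})$; since $K\notin\mathcal{S}$, the failure of the stopping condition \eqref{e.stoppingDef} forces $\Psi_{\gamma,\varepsilon}(K,\widehat{K})\le 4\mathcal{F}_{\gamma,\varepsilon}^{2}\sigma(K)$. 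The inner piece is handled $S$ by $S$: using $\Phi(J,K\setminus K_S)\le\Phi(J,K\setminus S)$ and the third line with $I_0=S$, $\widehat{I}=K$, one obtains $\sum_{J}\Phi\le 2^{-r\gamma}\Psi_{\gamma,\varepsilon}(S,K)$, and summing over the disjoint collection $\{S\}\subset K$ via the Energy Hypothesis with $I_0=K$ gives the bound $\lesssim\mathcal{F}_{\gamma,\varepsilon}^{2}\sigma(K)$. Combining (a) and (b) yields \eqref{e.gammat<}.
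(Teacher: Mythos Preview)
Your proof is correct and follows essentially the same route as the paper. The paper's own proof simply says ``repeat the argument from the previous proof'' (i.e., that of Theorem~\ref{t.betat}), which in turn invokes Lemma~\ref{l.minimal}; your cases (a) and (b) with the split $\widehat{K}\setminus K_{S}=(\widehat{K}\setminus K)\cup(K\setminus K_{S})$ are exactly the two pieces $\widehat{I}_{0}\setminus I_{0}$ and $I_{0}\setminus I_{r}$ handled inside the proof of that lemma, using the failure of \eqref{e.stoppingDef} for $K\notin\mathcal{S}$ and the Energy Hypothesis respectively, with \eqref{e.fStopping} collapsing the remaining sum over $\widehat{S}\subseteq K$.
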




\begin{proof}
We can take $K=\pi _{\mathcal{D}^{\sigma }}^{1}(S_{0})$ for some $S_{0}\in 
\mathcal{S}$, and in addition, we can assume that $K\not\in \mathcal{S}$,
because otherwise we are applying the Hilbert transform to the zero function.

We repeat an argument from the previous proof. Details are omitted.
\end{proof}

\section{The Paraproducts}

\label{s.paraproducts}

We continue to follow the line of argument in \cite{Vol} and \cite{NTV3}
using similar notation for the benefit of the reader. The paraproduct term $%
A_{2}^{4}$ is the central term in the proof. In this section, we reorganize
the sum in \eqref{e.Energy} according to the Corona Decomposition: The
essential point that must be accounted for is that for $J\in \mathcal{C}%
^{\omega }(S)$ and $J\subset I$, we \emph{need not have} $I\in \mathcal{C}%
^{\sigma }(S)$. On the other hand, it will be the case that $I\in \mathcal{C}%
^{\sigma }(\pi _{\mathcal{S}}^{t}(S))$ for some ancestor $\pi _{\mathcal{S}%
}^{t}(S)$ of $S$. The ancestor $\pi _{\mathcal{S}}^{t}(S)$ is only defined
for $1\leq t\leq \rho (S)$. (See Definition~\ref{d.stopping} for the
definition of $\rho (S)$.) In fact, the sum splits into $%
A_{2}^{4}=A_{1}^{5}+A_{2}^{5}$, where 
\begin{gather}
A_{1}^{5}\equiv \sum_{S\in \mathcal{S}}\sum_{\substack{ (I,J)\in \mathcal{P}%
(S)  \\ J\in \mathcal{C}^{\omega }(S)}}\mathbb{E}_{I_{J}}^{\sigma }\Delta
_{I}^{\sigma }f\cdot \left\langle H(\mathbf{1}_{S}\sigma ),\Delta
_{J}^{\omega }\phi \right\rangle _{\omega }\,,  \label{e.a51} \\
A_{2}^{5}\equiv \sum_{S\in \mathcal{S}\setminus \{I^{0}\}}\sum_{t=1}^{\rho
(S)}\sum_{\substack{ (I,J)\in \mathcal{P}(\pi _{\mathcal{S}}^{t}(S))  \\ %
J\in \mathcal{C}^{\omega }(S)}}\mathbb{E}_{I_{J}}^{\sigma }\Delta
_{I}^{\sigma }f\cdot \left\langle H(\mathbf{1}_{\pi _{\mathcal{S}%
}^{t}(S)}\sigma ),\Delta _{J}^{\omega }\phi \right\rangle _{\omega }\,.
\label{e.a52'}
\end{gather}

In $A_{1}^{5}$, we are treating the case where both $I \in \mathcal{D }%
^{\sigma }$ and $J $ are `controlled' by the same stopping interval. ($J$ is
not `very far' below $I$, as measured by the stopping intervals $\mathcal{S}$%
.) And, the point in the last line is that we are summing over $J\in 
\mathcal{C}^{\omega }(S)$, while the pair $(I,J) \in P (\pi _{\mathcal{S}%
}^{t}(S)) $, where $\pi _{ \mathcal{S}}^{t}(S)$ denotes the $t$-fold parent
of $S$ in the grid $\mathcal{S}$, see \eqref{e.parent}. This ancestor
appears in two places, controlling the sum over $I$, and in the argument of
the Hilbert transform.

We will prove 
\begin{equation}
\lvert A_{1}^{5}\rvert \lesssim (\mathcal{H}+\mathcal{F}_{\gamma
,\varepsilon })\left\Vert f\right\Vert _{L^{2}(\sigma )}\left\Vert \phi
\right\Vert _{L^{2}(\omega )}\,,  \label{e.a51<}
\end{equation}%
while $A_{2}^{5}$ will require further decomposition.

\subsection{$A ^{5}_1$: The First Paraproduct}

We use the telescoping sum identities \eqref{e.mart1} and \eqref{e.mart2} to
reorganize the sum in \eqref{e.a51}. Fix $S\in \mathcal{S}$ and $J\in 
\mathcal{C}^{\omega }(S)$. The sum over $I$ in \eqref{e.a51} is \eqref{e.PS}. 
\begin{equation}
\sum_{I\;:\;(I,J)\in \mathcal{P}(S)}\mathbb{E}_{I_{J}}^{\sigma }\Delta
_{I}^{\sigma }f=\mathbb{E}_{I_{J,\ast }}^{\sigma }f-\mathbb{E}_{\pi _{%
\mathcal{D}}\left( S\right) }^{\sigma }f\,.  \label{e.martApplied}
\end{equation}%
Here, we set $I_{J,\ast }$ to be the minimal member of $\mathcal{C}^{\sigma
}(S)$ that contains $J$, and satisfies $2^{r}\lvert J\rvert <\lvert I\rvert $%
. Such an interval must exist as $J$ is good. Thus, we can write 
\begin{equation}
A_{1}^{5}=\sum_{S\in \mathcal{S}}A_{1}^{5}(S)\,,  \label{e.a51=}
\end{equation}%
\begin{equation}
A_{1}^{5}(S)\equiv \sum_{J\in \mathcal{C}^{\omega }(S)}\left( \mathbb{E}%
_{I_{J,\ast }}^{\sigma }f-\mathbb{E}_{S}^{\sigma }f\right) \cdot
\left\langle H(\mathbf{1}_{S}\sigma ),\Delta _{J}^{\omega }\phi
\right\rangle _{\omega }\,.  \label{e.a51s}
\end{equation}%
The basic estimate here, and our first paraproduct style estimate is

\begin{proposition}
\label{p.a51S} We have the estimates 
\begin{equation}
\lvert A_{1}^{5}(S)\rvert \lesssim (\mathcal{H}+\mathcal{F}_{\gamma
,\varepsilon })\left\Vert \mathsf{P}_{S}^{\sigma }f-\mathbf{1}_{S}\mathbb{E}%
_{\pi _{\mathcal{D}}\left( S\right) }^{\sigma }f\right\Vert _{L^{2}(\sigma
)}\left\Vert \mathsf{P}_{S}^{\omega }\phi \right\Vert _{L^{2}(\omega
)}\,,\qquad S\in \mathcal{S}\,.  \label{e.a51S<}
\end{equation}%
Here, the projections on the right are defined in \eqref{e.PS}.
\end{proposition}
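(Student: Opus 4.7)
My plan is to treat $A_1^5(S)$ as a classical paraproduct and conclude via Carleson embedding (Theorem~\ref{carleson embedding}), with \eqref{e.measure1} supplying the Carleson measure condition. Since $\Delta_J^{\omega}\phi = \langle\phi,h_J^{\omega}\rangle_{\omega}\,h_J^{\omega}$, I write
\begin{equation*}
A_1^5(S) = \sum_{J\in\mathcal{C}^{\omega}(S)} a_J\,\lambda_J\,\langle\phi,h_J^{\omega}\rangle_{\omega},
\qquad
a_J \equiv \mathbb{E}_{I_{J,*}}^{\sigma}f - \mathbb{E}_{\pi_{\mathcal{D}}(S)}^{\sigma}f,
\quad
\lambda_J \equiv \langle H(\mathbf{1}_S\sigma),h_J^{\omega}\rangle_{\omega}.
\end{equation*}
Cauchy--Schwarz in $J$ together with Plancherel \eqref{Plancherel} yield
\begin{equation*}
|A_1^5(S)| \leq \Bigl(\sum_{J\in\mathcal{C}^{\omega}(S)} a_J^{\,2}\,\lambda_J^{\,2}\Bigr)^{1/2}\,\|\mathsf{P}_S^{\omega}\phi\|_{L^2(\omega)},
\end{equation*}
reducing the problem to controlling the first factor by $(\mathcal{H}+\mathcal{F}_{\gamma,\varepsilon})\,\|g\|_{L^2(\sigma)}$, where $g \equiv \mathsf{P}_S^{\sigma}f - \mathbf{1}_S\mathbb{E}_{\pi_{\mathcal{D}}(S)}^{\sigma}f$.

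Next I would identify $a_J$ as the $\sigma$-average $\mathbb{E}_{I_{J,*}}^{\sigma} g$. Expanding $\mathsf{P}_S^{\sigma}f = \sum_{I\in\mathcal{C}^{\sigma}(S)}\Delta_I^{\sigma}f$ and averaging over $I_{J,*}$, only intervals $I\supseteq I_{J,*}$ contribute (Haar cancellation kills the rest). The corona $\mathcal{C}^{\sigma}(S)$ contains precisely the $\mathcal{D}^{\sigma}$-ancestors of $I_{J,*}$ up through $\pi_{\mathcal{D}}(S)$, so the martingale telescoping identity \eqref{e.mart2} gives $\mathbb{E}_{I_{J,*}}^{\sigma}\mathsf{P}_S^{\sigma}f = \mathbb{E}_{I_{J,*}}^{\sigma}f - \mathbb{E}_{\pi_{\mathcal{D}}(S)}^{\sigma}f$. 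Subtracting the constant $\mathbb{E}_{\pi_{\mathcal{D}}(S)}^{\sigma}f$ then yields $\mathbb{E}_{I_{J,*}}^{\sigma} g = a_J$.

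I would then regroup the sum over $J$ by $I = I_{J,*}$: setting $A_I \equiv \sum_{J:\,I_{J,*}=I}\lambda_J^{\,2}$, we obtain
\begin{equation*}
\sum_{J\in\mathcal{C}^{\omega}(S)} a_J^{\,2}\lambda_J^{\,2} = \sum_{I\in\mathcal{C}^{\sigma}(S)} |\mathbb{E}_I^{\sigma}g|^{2}\,A_I.
\end{equation*}
To invoke Theorem~\ref{carleson embedding} I need the Carleson bound $\sum_{I\subset K} A_I \lesssim (\mathcal{H}^{2}+\mathcal{F}_{\gamma,\varepsilon}^{2})\,\sigma(K)$ for every $K\in\mathcal{D}^{\sigma}$. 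But $I_{J,*}\subset K$ forces $J\subset K$ and $2^{r}|J|<|I_{J,*}|\leq|K|$, so the left side is dominated by the sum on the left of \eqref{e.measure1}, which is exactly this bound. Carleson embedding then supplies $\sum_I|\mathbb{E}_I^{\sigma}g|^{2}A_I \lesssim (\mathcal{H}^{2}+\mathcal{F}_{\gamma,\varepsilon}^{2})\,\|g\|_{L^2(\sigma)}^{2}$, and taking square roots delivers \eqref{e.a51S<}.

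The main delicacy will be the telescoping step: I must verify that $\mathcal{C}^{\sigma}(S)$ really contains every $\mathcal{D}^{\sigma}$-interval between $I_{J,*}$ and $\pi_{\mathcal{D}}(S)$, inclusive, so that the martingale sum telescopes cleanly to the stated difference of averages. This is a direct unwinding of Definition~\ref{d.corona}: any such intermediate interval has a $\mathcal{D}^{\sigma}$-child contained in $S$ but lying in no proper subinterval of $\mathcal{S}$, so $S$ is the minimal stopping interval containing that child. Once this is in hand, the rest is the standard Cauchy--Schwarz/Carleson-embedding template for paraproducts, with \eqref{e.measure1} doing the heavy lifting.
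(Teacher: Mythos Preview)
Your proposal is correct and follows essentially the same approach as the paper's own proof: both apply Cauchy--Schwarz, regroup the sum by $I_{J,\ast}$, and then invoke the Carleson Embedding Theorem with \eqref{e.measure1} furnishing the Carleson measure bound. The only difference is cosmetic---the paper packages the $J$'s with a common $I_{J,\ast}=I$ into the projection $\mathsf{Q}_I^{\omega}$, whereas you work directly with the scalar Haar coefficients $\lambda_J$; the underlying estimate is identical.
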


\begin{proof}
We should reorganize the sum in a fashion consistent with paraproduct-type
estimates. For $I\in \mathcal{C}^{\sigma }(S)$, let 
\begin{equation*}
\mathsf{Q}_{I}^{\omega }\phi \equiv \sum_{J\in \mathcal{C}^{\omega
}(S):I_{J,\ast }=I}\Delta _{J}^{\omega }\phi \,.
\end{equation*}%
Using the Cauchy-Schwartz inequality, and the fact that $\mathbb{E}%
_{I}^{\sigma }f=\mathbb{E}_{I}^{\sigma }\mathsf{P}_{S}^{\sigma }f$ and $%
\mathsf{Q}_{I}^{\omega }\phi =\mathsf{Q}_{I}^{\omega }\mathsf{P}_{S}^{\omega
}\phi $ we see that 
\begin{align*}
\lvert A_{1}^{5}(S)\rvert & =\left\vert \sum_{I\in \mathcal{C}^{\sigma
}(S)}\left( \mathbb{E}_{I}^{\sigma }\mathsf{P}_{S}^{\sigma }f-\mathbb{E}%
_{\pi _{\mathcal{D}}\left( S\right) }^{\sigma }f\right) \cdot \left\langle H(%
\mathbf{1}_{S}\sigma ),\mathsf{Q}_{I}^{\omega }\mathsf{P}_{S}^{\omega }\phi
\right\rangle _{\omega }\right\vert  \\
& \leq \left[ \sum_{I\in \mathcal{C}^{\sigma }(S)}\lvert \mathbb{E}%
_{I}^{\sigma }\mathsf{P}_{S}^{\sigma }f-\mathbb{E}_{\pi _{\mathcal{D}}\left(
S\right) }^{\sigma }f\rvert ^{2}\cdot \left\Vert \mathsf{Q}_{I}^{\omega }H(%
\mathbf{1}_{S}\sigma )\right\Vert _{L^{2}(\omega )}^{2}\sum_{I\in \mathcal{C}%
^{\sigma }(S)}\left\Vert \mathsf{Q}_{I}^{\omega }\mathsf{P}_{S}^{\omega
}\phi \right\Vert _{L^{2}(\omega )}^{2}\right] ^{\frac{1}{2}} \\
& \leq \left\Vert \mathsf{P}_{S}^{\omega }\phi \right\Vert _{L^{2}(\omega )}%
\left[ \sum_{I\in \mathcal{C}^{\sigma }(S)}\lvert \mathbb{E}_{I}^{\sigma
}\left( \mathsf{P}_{S}^{\sigma }f-\mathbf{1}_{S}\mathbb{E}_{\pi _{\mathcal{D}%
}\left( S\right) }^{\sigma }f\right) \rvert ^{2}\cdot \left\Vert \mathsf{Q}%
_{I}^{\omega }H(\mathbf{1}_{S}\sigma )\right\Vert _{L^{2}(\omega )}^{2}%
\right] ^{1/2}\,.
\end{align*}%
In view of the Carleson Embedding inequality, namely \eqref{e.embed1} and %
\eqref{e.embed2}, this last factor is at most $\left\Vert \mathsf{P}%
_{S}^{\sigma }f-\mathbf{1}_{S}\mathbb{E}_{\pi _{\mathcal{D}}\left( S\right)
}^{\sigma }f\right\Vert _{L^{2}(\sigma )}$ times the Carleson measure norm
of the coefficients 
\begin{equation*}
\left\{ \left\Vert \mathsf{Q}_{I}^{\omega }H(\mathbf{1}_{S}\sigma
)\right\Vert _{L^{2}(\omega )}^{2}\ :\;I\in \mathcal{C}^{\omega }(S)\right\}
\,.
\end{equation*}%
But by the Plancherel formula \eqref{Plancherel} this is what is shown in %
\eqref{e.measure1} to be at most a constant multiple of $\mathcal{H}+%
\mathcal{F}_{\gamma ,\varepsilon }$, so the proof is complete.
\end{proof}

To complete the estimate for $A_{1}^{5}$, from \eqref{e.a51=} and the
observation that the projections on the right in \eqref{e.a51S<} are
essentially orthogonal, see \eqref{e.2}, we can estimate 
\begin{align*}
\lvert A_{1}^{5}\rvert & \lesssim (\mathcal{H}+\mathcal{F}_{\gamma
,\varepsilon })\sum_{S\in \mathcal{S}}\left\Vert \mathsf{P}_{S}^{\sigma }f-%
\mathbf{1}_{S}\mathbb{E}_{\pi _{\mathcal{D}}\left( S\right) }^{\sigma
}f\right\Vert _{L^{2}(\sigma )}\left\Vert \mathsf{P}_{S}^{\omega }\phi
\right\Vert _{L^{2}(\omega )}. \\
& \lesssim (\mathcal{H}+\mathcal{F}_{\gamma ,\varepsilon })\left( \sum_{S\in 
\mathcal{S}}\left\Vert \mathsf{P}_{S}^{\sigma }f-\mathbf{1}_{S}\mathbb{E}%
_{\pi _{\mathcal{D}}\left( S\right) }^{\sigma }f\right\Vert _{L^{2}(\sigma
)}^{2}\sum_{S\in \mathcal{S}}\left\Vert \mathsf{P}_{S}^{\omega }\phi
\right\Vert _{L^{2}(\omega )}^{2}\right) ^{1/2} \\
& \lesssim (\mathcal{H}+\mathcal{F}_{\gamma ,\varepsilon })\left\Vert
f\right\Vert _{L^{2}(\sigma )}\left\Vert \phi \right\Vert _{L^{2}(\omega )},
\end{align*}%
since we have 
\begin{eqnarray}
\sum_{S\in \mathcal{S}}\left\Vert \mathbf{1}_{S}\mathbb{E}_{\pi _{\mathcal{D}%
}\left( S\right) }^{\sigma }f\right\Vert _{L^{2}(\sigma )}^{2} &=&\sum_{S\in 
\mathcal{S}}\sigma \left( S\right) \left\vert \mathbb{E}_{\pi _{\mathcal{D}%
}\left( S\right) }^{\sigma }f\right\vert ^{2}
\\&\leq& \sum_{S\in \mathcal{S}%
}\sigma \left( S\right) \left( \mathbb{E}_{S}^{\sigma }\left\vert
f\right\vert \right) ^{2} \label{e.dominate}\\
&\lesssim &\left\Vert \mathcal{M}_{\sigma }f\right\Vert _{L^{2}(\sigma
)}^{2}\lesssim \left\Vert f\right\Vert _{L^{2}(\sigma )}^{2}.
\end{eqnarray}%
Here, we should make an appeal to \eqref{e.fStopping} in order to conclude that the maximal function  $ \mathcal M _{\sigma }$ 
dominates the sum in \eqref{e.dominate}.  This is \eqref{e.a51<}.

\subsection{The Remaining Paraproducts}

We repeat the analysis of \eqref{e.martApplied}, but for the term $A_{2}^{5}$
defined in \eqref{e.a52'}. Fix $J$, which must be a member of $\mathcal{C}%
^{\omega }(S)$ for some $S\in \mathcal{S}\setminus \{I^{0}\}$. The sum over $%
I$ in \eqref{e.a52'}, as it turns out, is only a function of this $S$, and
equals%
\begin{align}
A_{2}^{5}(S)& \equiv \sum_{t=1}^{\rho (S)}\sum_{\substack{ (I,J)\in \mathcal{%
P}(\pi _{\mathcal{S}}^{t}(S))  \\ J\in \mathcal{C}^{\omega }(S)}}\mathbb{E}%
_{I_{J}}^{\sigma }\Delta _{I}^{\sigma }f\cdot \left\langle H(\mathbf{1}_{\pi
_{\mathcal{S}}^{t}(S)}\sigma ),\Delta _{J}^{\omega }\phi \right\rangle
_{\omega }  \label{e.a521} \\
& =\sum_{t=1}^{\rho (S)}\sum_{J\in \mathcal{C}^{\omega }(S)}(\mathbb{E}_{\pi
_{\mathcal{D}^{\sigma }}^{2}(\pi _{\mathcal{S}}^{t-1}(S))}^{\sigma }f-%
\mathbb{E}_{\pi _{\mathcal{D}^{\sigma }}^{1}(\pi _{\mathcal{S}%
}^{t}(S))}^{\sigma }f)\cdot \left\langle H(\mathbf{1}_{\pi _{\mathcal{S}%
}^{t}(S)}\sigma ),\Delta _{J}^{\omega }\phi \right\rangle _{\omega }\,.
\end{align}%
We argue as follows. With $J\in \mathcal{C}^{\omega }(S)$ fixed, the sum
over $I$ such that $(I,J)\in \mathcal{P}(\pi _{\mathcal{S}}^{t}(S))$ is only
a function of $S$ and $t$, and is a sum over consecutive intervals in the
grid $\mathcal{D}^{\sigma }$. The smallest interval that contributes to the
sum is $\pi _{\mathcal{D}^{\sigma }}^{2}(\pi _{\mathcal{S}}^{t-1}(S))$, the 
\emph{second} dyadic parent of $\pi _{\mathcal{S}}^{t-1}(S)$, and the
largest is $\pi _{\mathcal{D}^{\sigma }}^{1}(\pi _{\mathcal{S}}^{t}(S))$.
(Recall Definition~\ref{d.corona}. Also, these two intervals might be one
and the same.)

In \eqref{e.a521}, the sum over $J$ is independent of the sum over $t$. In
the next steps, we concentrate on the sum over $t$. Below we add and
subtract a cancellative term, to adjust for the second parent in %
\eqref{e.a521}. 
\begin{align}\label{e.addSub2}
\widetilde{A}_{2}^{5}(S)& =\sum_{t=1}^{\rho (S)}(\mathbb{E}_{\pi _{\mathcal{D%
}^{\sigma }}^{2}(\pi _{\mathcal{S}}^{t-1}(S))}^{\sigma }f-\mathbb{E}_{\pi _{%
\mathcal{D}^{\sigma }}^{1}\left( \pi _{\mathcal{S}}^{t-1}(S)\right)
}^{\sigma }f+\mathbb{E}_{\pi _{\mathcal{D}^{\sigma }}^{1}\left( \pi _{%
\mathcal{S}}^{t-1}(S)\right) }^{\sigma }f-\mathbb{E}_{\pi _{\mathcal{D}%
^{\sigma }}^{1}(\pi _{\mathcal{S}}^{t}(S))}^{\sigma }f)
\\ & \qquad \times H(\mathbf{1} _{\pi _{\mathcal{S}}^{t}(S)}\sigma )   \\
& =\widetilde{A}_{21}^{5}(S)+\widetilde{A}_{22}^{5}(S)\,, \\
\widetilde{A}_{21}^{5}(S)& \equiv \sum_{t=1}^{\rho (S)}\left( \mathbb{E}%
_{\pi _{\mathcal{D}^{\sigma }}^{2}(\pi _{\mathcal{S}}^{t-1}(S))}^{\sigma }f-%
\mathbb{E}_{\pi _{\mathcal{D}^{\sigma }}^{1}\left( \pi _{\mathcal{S}%
}^{t-1}(S)\right) }^{\sigma }f\right) \cdot H(\mathbf{1}_{\pi _{\mathcal{S}%
}^{t}(S)}\sigma ),  \label{e.a61} \\
\widetilde{A}_{22}^{5}(S)& \equiv \sum_{t=1}^{\rho (S)}\left( \mathbb{E}%
_{\pi _{\mathcal{D}^{\sigma }}^{1}(\pi _{\mathcal{S}}^{t-1}(S))}^{\sigma }f-%
\mathbb{E}_{\pi _{\mathcal{D}^{\sigma }}^{1}\left( \pi _{\mathcal{S}%
}^{t}(S)\right) }^{\sigma }f\right) \cdot H(\mathbf{1}_{\pi _{\mathcal{S}%
}^{t}(S)}\sigma )\,.  \label{e.a62}
\end{align}%
The term $\widetilde{A}_{22}^{5}(S)$ in \eqref{e.a62} is itself a
telescoping sum, and so we can sum by parts to write 
\begin{equation}
\widetilde{A}_{22}^{5}(S)=\mathbb{E}_{\pi _{\mathcal{D}^{\sigma }}^{1}\left(
S\right) }^{\sigma }f\cdot H(\mathbf{1}_{\pi _{\mathcal{S}}^{1}(S)}\sigma
)+\sum_{t=1}^{\rho (S)}\mathbb{E}_{\pi _{\mathcal{D}^{\sigma }}^{1}\left(
\pi _{\mathcal{S}}^{t}(S)\right) }^{\sigma }f\cdot H(\mathbf{1}_{\pi _{%
\mathcal{S}}^{t+1}(S)\backslash \pi _{\mathcal{S}}^{t}(S)}\sigma )\,.
\label{e.a522=}
\end{equation}%
Note that there is one term missing, but it has the expectation $\mathbb{E}%
_{\pi _{\mathcal{S}}^{\rho (S)}(S)}^{\sigma }f=\mathbb{E}_{I^{0}}^{\sigma }f$%
, where $I^{0}$ is the largest interval that we fixed at the beginning of
the proof. In particular we have assumed that this expectation is zero.

We combine these steps, specifically the definition of $A_{2}^{5}$ in %
\eqref{e.a52'} and the identities \eqref{e.addSub2}, \eqref{e.a61}, (\ref%
{e.a62}), and \eqref{e.a522=} to write $%
A_{2}^{5}=A_{1}^{6}+A_{2}^{6}+A_{3}^{6}$, where 
\begin{gather}
A_{i}^{6}\equiv \sum_{S\in \mathcal{S}\setminus
\{I^{0}\}}A_{i}^{6}(S)\,,\qquad i=1,2,3\,, \\
A_{1}^{6}(S)\equiv \sum_{t=1}^{\rho (S)}(\mathbb{E}_{\pi _{\mathcal{D}%
^{\sigma }}^{2}(\pi _{\mathcal{S}}^{t-1}(S))}^{\sigma }f-\mathbb{E}_{\pi
_{\pi _{\mathcal{D}^{\sigma }}^{1}(\pi _{\mathcal{S}}^{t-1}(S))}}^{\sigma
}f)\cdot \left\langle H(\mathbf{1}_{\pi _{\mathcal{S}}^{t}(S)}\sigma ),%
\mathsf{P}_{S}^{\omega }\phi \right\rangle _{\omega }\,,  \label{e.a61s} \\
A_{2}^{6}(S)\equiv \mathbb{E}_{\pi _{\mathcal{D}^{\sigma }}^{1}\left(
S\right) }^{\sigma }f\cdot \left\langle H(\mathbf{1}_{\pi _{\mathcal{S}%
}^{1}(S)}\sigma ),\mathsf{P}_{S}^{\omega }\phi \right\rangle _{\omega }\,,
\label{e.a62s} \\
A_{3}^{6}(S)\equiv \sum_{t=1}^{\rho (S)}\mathbb{E}_{\pi _{\mathcal{D}%
^{\sigma }}^{1}\left( \pi _{\mathcal{S}}^{t}(S)\right) }^{\sigma }f\cdot
\left\langle H(\mathbf{1}_{\pi _{\mathcal{S}}^{t+1}(S)\backslash \pi _{%
\mathcal{S}}^{t}(S)}\sigma ),\mathsf{P}_{S}^{\omega }\phi \right\rangle
_{\omega }\,.  \label{e.a63s}
\end{gather}

Of these three expressions, the first $A_{1}^{6}$ has cancellative terms on
both $f$ and $\phi $, hence it is not (yet) a paraproduct as such. The
second $A_{2}^{6}$ is a paraproduct, one that is very close in form to that
of $A_{1}^{5}$, compare \eqref{e.a51s} and \eqref{e.a62s}. The third term is
a paraproduct, but looking at the support of the argument of the Hilbert
transform, one sees that it is also degenerate, and we should obtain some
additional decay in the parameter $t$, the `miraculous improvement of the
Carleson property' in Chapter 21.3 of \cite{Vol} - see \eqref{e.a63t<}
below. We take up these estimates in the next subsections, passing from more
intricate to less intricate.

In fact we will prove in \S \ref{s.63}, \S \ref{s.62} and \S \ref{s.61}
respectively,%
\begin{gather}
\lvert A_{3}^{6}\rvert \lesssim \mathcal{F}_{\gamma ,\varepsilon }\left\Vert
f\right\Vert _{L^{2}(\sigma )}\left\Vert \phi \right\Vert _{L^{2}(\omega
)}\,,  \label{e.63<} \\
\lvert A_{2}^{6}\rvert \lesssim (\mathcal{H}+\mathcal{F}_{\gamma
,\varepsilon })\left\Vert f\right\Vert _{L^{2}(\sigma )}\left\Vert \phi
\right\Vert _{L^{2}(\omega )}\,,  \label{e.62<} \\
\lvert A_{1}^{6}\rvert \lesssim (\mathcal{H}+\mathcal{F}_{\gamma
,\varepsilon })\left\Vert f\right\Vert _{L^{2}(\sigma )}\left\Vert \phi
\right\Vert _{L^{2}(\omega )}\,.  \label{e.61<}
\end{gather}%
In particular, the Energy Hypothesis enters into \eqref{e.63<}.

\subsection{The Term $A ^{6} _{3}$}

\label{s.63}

Let us fix $t$, and define 
\begin{align}
A_{3}^{6}(S,t)& \equiv \mathbb{E}_{\pi _{\mathcal{D}^{\sigma }}^{1}\left(
\pi _{\mathcal{S}}^{t}(S)\right) }^{\sigma }f\cdot \left\langle H(\mathbf{1}%
_{\pi _{\mathcal{S}}^{t+1}(S)\backslash \pi _{\mathcal{S}}^{t}(S)}\sigma ),%
\mathsf{P}_{S}^{\omega }\phi \right\rangle _{\omega }\,,\qquad S\in \mathcal{%
S},\rho (S)\geq t\,.  \label{e.a63ts} \\
A_{3}^{6}(t)& \equiv \sum_{S\in \mathcal{S}\;:\;\rho (S)\geq
t}A_{3}^{6}(S,t)\,.
\end{align}%
Here, we impose the restriction $\rho (S)\geq t$ so that the $t$-fold parent
of $S$ is defined.

The estimate we prove is 
\begin{equation}
\lvert A_{3}^{6}(t)\rvert \lesssim 2^{-\gamma t}\mathcal{F}_{\gamma
,\varepsilon }\left\Vert f\right\Vert _{L^{2}(\sigma )}\left\Vert \phi
\right\Vert _{L^{2}(\omega )}\,,\qquad t\geq 1\,.  \label{e.a63t<}
\end{equation}%
The constant $\epsilon =\gamma /2>0$. Clearly this proves \eqref{e.63<}
after summation on $t\geq 1$.

The projections $\mathsf{P}_{S}^{\omega }$ are orthogonal, so we have 
\begin{equation}
\lvert A_{3}^{6}(t)\rvert \leq \left\Vert \phi \right\Vert _{L^{2}(\omega )} 
\left[ \sum_{S\in \mathcal{S}\;:\;\rho (S)\geq t}\left\vert \mathbb{E}_{\pi
_{\mathcal{D}^{\sigma }}^{1}\left( \pi _{\mathcal{S}}^{t}(S)\right)
}^{\sigma }f\right\vert ^{2}\left\Vert \mathsf{P}_{S}^{\omega }H(\mathbf{1}%
_{\pi _{\mathcal{S}}^{t+1}(S)\backslash \pi _{\mathcal{S}}^{t}(S)}\sigma
)\right\Vert _{L^{2}(\omega )}^{2}\right] ^{1/2}\,.  \label{e.63tsum}
\end{equation}%
Recalling the notation \eqref{e.alphat}, the sum on the right in %
\eqref{e.63tsum} is 
\begin{equation*}
\left[ \sum_{S\in {\mathcal{S}}}\alpha _{t}({S})\left\vert \mathbb{E}_{\pi _{%
\mathcal{D}^{\sigma }}^{1}\left( S\right) }^{\sigma }f\right\vert ^{2}\right]
^{1/2}\,.
\end{equation*}%
Therefore, to prove \eqref{e.a63t<}, we should verify that the Carleson
measure norm of the coefficients $\{\alpha _{t}({S})\;:\;{S}\in {\mathcal{S}}%
\}$ is at most $C2^{-\gamma t}\mathcal{F}_{\gamma ,\varepsilon }$. But this
is the content of Theorem \ref{t.miraculous}, and so our proof is complete.

\subsection{The term $A ^{6} _2$}

\label{s.62} We certainly have $\pi _{\mathcal{D}^{\sigma }}^{1}(S)\subset
\pi _{\mathcal{S}}^{1}(S)$, so that it is natural to split term in %
\eqref{e.a62s} into two, namely writing $\pi _{\mathcal{S}}^{1}(S)=\pi _{%
\mathcal{D}^{\sigma }}^{1}(S)\cup \{\pi _{\mathcal{S}}^{1}(S)\backslash \pi
_{\mathcal{D}^{\sigma }}^{1}(S)\}$, to give us 
\begin{align}
\lvert A_{1}^{7}\rvert & \equiv \left\vert \sum_{S\in \mathcal{S}}\mathbb{E}%
_{\pi _{\mathcal{D}^{\sigma }}^{1}\left( S\right) }^{\sigma }f\cdot
\left\langle H(\mathbf{1}_{\pi _{\mathcal{D}^{\sigma }}^{1}(S)}\sigma ),%
\mathsf{P}_{S}^{\omega }\phi \right\rangle _{\omega }\right\vert \lesssim (%
\mathcal{H}+\mathcal{F}_{\gamma ,\varepsilon })\left\Vert f\right\Vert
_{L^{2}(\sigma )}\left\Vert \phi \right\Vert _{L^{2}(\omega )}\,,
\label{e.6s1<} \\
\lvert A_{2}^{7}\rvert & \equiv \left\vert \sum_{S\in \mathcal{S}}\mathbb{E}%
_{\pi _{\mathcal{D}^{\sigma }}^{1}\left( S\right) }^{\sigma }f\cdot
\left\langle H(\mathbf{1}_{\pi _{\mathcal{S}}^{1}(S)\backslash \pi _{%
\mathcal{D}^{\sigma }}^{1}(S)}\sigma ),\mathsf{P}_{S}^{\omega }\phi
\right\rangle _{\omega }\right\vert \lesssim \mathcal{F}_{\gamma
,\varepsilon }\left\Vert f\right\Vert _{L^{2}(\sigma )}\left\Vert \phi
\right\Vert _{L^{2}(\omega )}\,.  \label{e.622<}
\end{align}%
Together these prove \eqref{e.62<}. We treat them in turn.

Recalling the notation \eqref{e.betat}, we estimate 
\begin{align*}
\left\vert \left\langle H(\mathbf{1}_{\pi _{\mathcal{S}}^{1}(S)}\sigma ),%
\mathsf{P}_{S}^{\omega }\phi \right\rangle _{\omega }\right\vert &
=\left\vert \left\langle \mathsf{P}_{S}^{\omega }H(\mathbf{1}_{\pi _{%
\mathcal{S}}^{1}(S)}\sigma ),\mathsf{P}_{S}^{\omega }\phi \right\rangle
_{\omega }\right\vert \\
& \leq \beta (S)^{1/2}\left\Vert \mathsf{P}_{S}^{\omega }\phi \right\Vert
_{L^{2}(\omega )}\,.
\end{align*}%
The latter projections are mutually orthogonal so we can estimate 
\begin{align*}
\lvert A_{1}^{7}\rvert & \leq \left[ \sum_{S\in \mathcal{S}}\beta
(S)\left\vert \mathbb{E}_{\pi _{\mathcal{D}^{\sigma }}^{1}\left( S\right)
}^{\sigma }f\right\vert ^{2}\right] ^{1/2}\left\Vert \phi \right\Vert
_{L^{2}(\omega )} \\
& \lesssim (\mathcal{H}+\mathcal{F}_{\gamma ,\varepsilon })\left\Vert
f\right\Vert _{L^{2}(\sigma )}\left\Vert \phi \right\Vert _{L^{2}(\omega
)}\,.
\end{align*}%
We have appealed to the Carleson measure estimate \eqref{e.betat<} to get
the $\left\Vert f\right\Vert _{L^{2}(\sigma )}$ term. This proves %
\eqref{e.6s1<}.

\medskip

The argument for \eqref{e.622<} is similar. Recalling the notation %
\eqref{e.gammat}, we have 
\begin{align*}
\left\vert \left\langle H(\mathbf{1}_{\pi _{\mathcal{S}}^{1}(S)\backslash
\pi _{\mathcal{D}^{\sigma }}^{1}(S)}\sigma ),\mathsf{P}_{S}^{\omega }\phi
\right\rangle _{\omega }\right\vert & =\left\vert \left\langle \mathsf{P}%
_{S}^{\omega }H(\mathbf{1}_{\pi _{\mathcal{S}}^{1}(S)\backslash \pi _{%
\mathcal{D}^{\sigma }}^{1}(S)}\sigma ),\mathsf{P}_{S}^{\omega }\phi
\right\rangle _{\omega }\right\vert \\
& \leq \gamma (S)^{1/2}\left\Vert \mathsf{P}_{S}^{\omega }\phi \right\Vert
_{L^{2}(\omega )}\,.
\end{align*}%
We estimate 
\begin{align*}
\lvert A_{2}^{7}\rvert & \leq \left[ \sum_{S\in \mathcal{S}}\gamma
(S)\left\vert \mathbb{E}_{\pi _{\mathcal{D}^{\sigma }}^{1}\left( S\right)
}^{\sigma }f\right\vert ^{2}\right] ^{1/2}\left\Vert \phi \right\Vert
_{L^{2}(\omega )} \\
& \lesssim \mathcal{F}_{\gamma ,\varepsilon }\left\Vert f\right\Vert
_{L^{2}(\sigma )}\left\Vert \phi \right\Vert _{L^{2}(\omega )}\,.
\end{align*}%
We have appealed to the Carleson measure estimate \eqref{e.gammat<} to get
the $\left\Vert f\right\Vert _{L^{2}(\sigma )}$ term.

\subsection{The Term $A ^{6} _{1}$}

\label{s.61}

In the definition of $A_{1}^{6}(S)$, see \eqref{e.a61s}, note that the
difference of expectations depends upon a single Haar coefficient, the one
for the dyadic interval $\pi _{\mathcal{D}^{\sigma }}^{2}(\pi _{\mathcal{S}%
}^{t}(S))$. To be explicit, we will have the following equality 
\begin{align*}
\mathbb{E}_{\pi _{\mathcal{D}^{\sigma }}^{2}(S)}^{\sigma }f- \mathbb{E}_{\pi
_{\mathcal{D}^{\sigma }}^{1}(S)}^{\sigma }f &= - \mathbb{E}_{\pi _{\mathcal{D%
}^{\sigma }}^{1}(S)}^{\sigma } \Delta ^{\sigma } _{\pi _{\mathcal{D}^{\sigma
}}^{2}(S)} , \qquad S\in \mathcal{S }\,.
\end{align*}

We reindex the sum defining $A_{1}^{6}$ as follows. From \eqref{e.a61s}, we
write%
\begin{equation*}
-A_{1}^{6}(S)=\sum_{t=1}^{\rho (S)}\mathbb{E}_{\pi _{\mathcal{D}^{\sigma
}}^{1}(\pi _{\mathcal{S}}^{t-1}(S))}^{\sigma }\Delta _{\pi _{\mathcal{D}%
^{\sigma }}^{2}(\pi _{\mathcal{S}}^{t-1}(S))}^{\sigma }f\cdot \left\langle H(%
\mathbf{1}_{\pi _{\mathcal{S}}^{t}(S)}\sigma ),\mathsf{P}_{S}^{\omega }\phi
\right\rangle _{\omega }=A_{3}^{7}(S)+A_{4}^{7}(S)\,,
\end{equation*}%
\begin{equation}
A_{3}^{7}(S)\equiv \sum_{t=1}^{\rho (S)}\mathbb{E}_{\pi _{\mathcal{D}%
^{\sigma }}^{1}(\pi _{\mathcal{S}}^{t-1}(S))}^{\sigma }\Delta _{\pi _{%
\mathcal{D}^{\sigma }}^{2}(\pi _{\mathcal{S}}^{t-1}(S))}^{\sigma }f\cdot
\left\langle H(\mathbf{1}_{\pi _{\mathcal{S}}^{t}(S)\backslash \pi _{%
\mathcal{S}}^{t-1}(S)}\sigma ),\mathsf{P}_{S}^{\omega }\phi \right\rangle
_{\omega }  \label{e.73}
\end{equation}%
\begin{equation}
A_{4}^{7}(S)\equiv \sum_{t=1}^{\rho (S)}\mathbb{E}_{\pi _{\mathcal{D}%
^{\sigma }}^{1}(\pi _{\mathcal{S}}^{t-1}(S))}^{\sigma }\Delta _{\pi _{%
\mathcal{D}^{\sigma }}^{2}(\pi _{\mathcal{S}}^{t-1}(S))}^{\sigma }f\cdot
\left\langle H(\mathbf{1}_{\pi _{\mathcal{S}}^{t-1}(S)}\sigma ),\mathsf{P}%
_{S}^{\omega }\phi \right\rangle _{\omega }  \label{e.74}
\end{equation}

We argue that 
\begin{equation}
\left\vert \sum_{S\in \mathcal{S}-\{S_{0}\}}A_{3}^{7}(S)\right\vert \lesssim 
\mathcal{F}_{\gamma ,\varepsilon }\left\Vert f\right\Vert _{L^{2}(\sigma
)}\left\Vert \phi \right\Vert _{L^{2}(\omega )}\ ,  \label{e.73<}
\end{equation}%
\begin{equation}
\left\vert \sum_{S\in \mathcal{S}-\{S_{0}\}}A_{4}^{7}(S)\right\vert \lesssim 
\mathcal{F}_{\gamma ,\varepsilon }\left\Vert f\right\Vert _{L^{2}(\sigma
)}\left\Vert \phi \right\Vert _{L^{2}(\omega )}\,.  \label{e.74<}
\end{equation}%
Indeed, the first inequality \eqref{e.73<} is easier than the argument for %
\eqref{e.63<}, due to the extra orthogonality present with the Haar
difference applied to $f$ in \eqref{e.73}. We omit the proof.

\medskip

We turn to the proof of \eqref{e.74<}, and will need to appeal to our Energy
Hypothesis again. Begin by reindexing the sum. We define 
\begin{gather}
A_{4}^{7}(S,t)\equiv \mathbb{E}_{\pi _{\mathcal{D}^{\sigma
}}^{1}(S)}^{\sigma }\Delta _{\pi _{\mathcal{D}^{\sigma }}^{2}(S)}^{\sigma
}f\cdot \sum_{\substack{ S^{\prime }\in \mathcal{S}  \\ \pi ^{t-1}(S^{\prime
})=S}}\left\langle H(\mathbf{1}_{S}\sigma ),\mathsf{P}_{S^{\prime }}^{\omega
}\phi \right\rangle _{\omega } \\
\left\vert \sum_{S\in \mathcal{S}}A_{4}^{7}(S,t)\right\vert \lesssim
2^{-\gamma t/2}\mathcal{F}_{\gamma ,\varepsilon }\left\Vert f\right\Vert
_{L^{2}(\sigma )}\left\Vert \phi \right\Vert _{L^{2}(\omega )}\,,\qquad
t\geq 1\,.  \label{e.74t}
\end{gather}%
(The decay in $t$ is slightly worse in this case than in others.) Indeed, we
first exploit the implicitly orthogonality in the sum. Note that we will have 
\begin{gather*}
\sum_{S\in \mathcal{S}}\lvert \langle f,h_{\pi _{\mathcal{D}^{\sigma
}}^{2}(S)}^{\sigma }\rangle _{\omega }\rvert ^{2}\leq \left\Vert
f\right\Vert _{L^{2}(\sigma )}^{2}\,, \\
\sum_{S\in \mathcal{S}}\left\Vert \sum_{S^{\prime }\in \mathcal{S}:\pi
^{t-1}(S^{\prime })=S}\mathsf{P}_{S^{\prime }}^{\omega }\phi \right\Vert
_{L^{2}(\omega )}^{2}\leq \left\Vert \phi \right\Vert _{L^{2}(\omega )}\,.
\end{gather*}%
We also have from \eqref{e.Eh}, that 
\begin{equation*}
\left\vert \mathbb{E}_{\pi _{\mathcal{D}^{\sigma }}^{1}(S)}^{\sigma }h_{\pi
_{\mathcal{D}^{\sigma }}^{2}(S)}^{\sigma }\right\vert \leq \lvert \pi _{%
\mathcal{D}^{\sigma }}^{1}(S)\rvert ^{-1/2}
\end{equation*}%
Combining these facts, we see that \eqref{e.74t} follows from the estimate 
\begin{equation}
\left\Vert \sum_{S^{\prime }\in \mathcal{S}:\pi ^{t-1}(S^{\prime })=S}%
\mathsf{P}_{S^{\prime }}^{\omega }H(\mathbf{1}_{\pi _{\mathcal{S}%
}^{t-1}(S)}\sigma )\right\Vert _{L^{2}(\omega )}^{2}\lesssim 2^{-\gamma t}(%
\mathcal{H}^{2}+\mathcal{F}_{\gamma ,\varepsilon }^{2})\lvert \pi _{\mathcal{%
D}^{\sigma }}^{1}(S)\rvert \,,\qquad S\in \mathcal{S},t\geq 1\,.
\label{e.74<<}
\end{equation}

We turn to the proof of this last estimate. We will need geometric decay
from two different sources. One is the geometric decay in \eqref{e.fStopping}%
, and the second is the application of the Energy Hypothesis, as in the
proof of Theorem~\ref{t.miraculous}. Fix $S\in \mathcal{S}$, and integer $%
u\simeq \frac{t-1}{2}$, and let $\mathcal{S}_{u}$ be those $S^{\prime }\in 
\mathcal{S}$ with $\pi _{\mathcal{S}}^{u}(S^{\prime })=S$. We have 
\begin{gather}
\left\Vert \sum_{S^{\prime }\in \mathcal{S}:\pi ^{t-1}(S^{\prime })=S}%
\mathsf{P}_{S^{\prime }}^{\omega }H(\mathbf{1}_{\pi _{\mathcal{S}%
}^{t-1}(S)}\sigma )\right\Vert _{L^{2}(\omega )}^{2}=\sum_{S^{\prime }\in 
\mathcal{S}_{u}}B(S^{\prime }) \\
B(S^{\prime })\equiv \left\Vert \sum_{S^{\prime \prime }\in \mathcal{S}:\pi
^{t-1-u}(S^{\prime \prime })=S^{\prime }}\mathsf{P}_{S^{\prime \prime
}}^{\omega }H(\mathbf{1}_{\pi _{\mathcal{S}}^{t-1}(S)}\sigma )\right\Vert
_{L^{2}(\omega )}^{2}
\end{gather}%
Now, in the definition of $B(S^{\prime })$, we adjust the argument of the
Hilbert transform, writing $B(S^{\prime })=B_{1}(S^{\prime
})+B_{2}(S^{\prime \prime })$, where 
\begin{align}
B_{1}(S^{\prime })& \equiv \left\Vert \sum_{S^{\prime \prime }\in \mathcal{S}%
:\pi ^{t-1-u}(S^{\prime \prime })=S^{\prime }}\mathsf{P}_{S^{\prime \prime
}}^{\omega }H(\mathbf{1}_{\pi _{\mathcal{S}}^{t-1}(S)\backslash S^{\prime
}}\sigma )\right\Vert _{L^{2}(\omega )}^{2}  \label{e.B2d} \\
B_{2}(S^{\prime })& \equiv \left\Vert \sum_{S^{\prime \prime }\in \mathcal{S}%
:\pi ^{t-1-u}(S^{\prime \prime })=S^{\prime }}\mathsf{P}_{S^{\prime \prime
}}^{\omega }H(\mathbf{1}_{S^{\prime }}\sigma )\right\Vert _{L^{2}(\omega
)}^{2}
\end{align}

Now, by the testing condition \eqref{e.H1}, we have 
\begin{align}  \label{e.B2<}
\sum _{S^{\prime }\in \mathcal{S }_{u}} B _2(S^{\prime }) & \leq \mathcal{H }%
^2 \sum _{S^{\prime }\in \mathcal{S }_{u}} \sigma (S^{\prime }) \\
& \leq 2 ^{- u/2} \mathcal{H }^2 \sigma (S) \leq 2 ^{- u/2} \mathcal{H }^2
\sigma ( \pi ^{1} _{\mathcal{D }^ \sigma }(S))
\end{align}
where we have appealed to the Carleson measure property of the measure $%
\sigma $ on the stopping cubes, more precisely \eqref{e.14}, to deduce the
last line. This proves half of \eqref{e.74<<}.

\smallskip We use the notation \eqref{e.maxJ}, and apply \eqref{e.minimal<}
to see that 
\begin{align}
\sum_{S^{\prime }\in \mathcal{S}_{u}}B_{1}(S^{\prime })& =\sum_{S^{\prime
}\in \mathcal{S}_{u}}\sum_{S^{\prime \prime }\in \mathcal{S}:\pi
^{t-1-u}(S^{\prime \prime })=S^{\prime }}\sum_{J\in \mathcal{J}(S^{\prime
\prime })}\lVert \mathsf{P}_{J}^{\omega }H(\mathbf{1}_{\pi _{\mathcal{S}%
}^{t-1}(S)\backslash S^{\prime }}\sigma )\rVert _{L^{2}(\omega )}^{2} \\
& \lesssim \mathcal{F}_{\gamma ,\varepsilon }^{2}2^{-\gamma t/2}\sigma (\pi
_{\mathcal{D}^{\sigma }}^{1}(S))\,.
\end{align}%
This completes the proof of \eqref{e.74<<}.

\section{The Remaining Estimates}

\label{s.remain}

We collect together the estimates claimed in earlier sections. The estimates
in the first two subsections below are in \cite{Vol}, and the remaining
three subsections essentially follow the arguments in \cite{Vol} but using
the Energy Hypothesis in \S \ref{s.43}.

\subsection{$A ^{4} _{3}$: The Stopping Terms}

\label{s.43}

To control \eqref{e.Stopping}, and prove \eqref{e.43<}, it is important that
we are dealing with the Energy Hypothesis \eqref{smallest condition}.

We first claim that for $S\in \mathcal{S}$ and $s\geq 0$ an integer%
\begin{align}
A_{3}^{4}(S,s)& \equiv \sum_{(I,J)\in \mathcal{P}(S):\lvert J\rvert
=2^{-s}\lvert I\rvert }\left\vert \mathbb{E}_{I_{J}}^{\omega }\Delta
_{I}^{\sigma }f\cdot \left\langle H(\mathbf{1}_{S\backslash I_{J}}\sigma
),\Delta _{J}^{\omega }\phi \right\rangle _{\omega }\right\vert 
\label{e.stop<} \\
& \lesssim 2^{-\gamma s}\mathcal{F}_{\gamma ,\varepsilon }F(S)\Lambda
(S,s)\,, \\
F(S)^{2}& \equiv \sum_{I\in \mathcal{C}^{\sigma }(S)}\left\vert \left\langle
f,h_{I}^{\sigma }\right\rangle _{\sigma }\right\vert ^{2} \\
\Lambda (S,s)^{2}& \equiv \sum_{I\in \mathcal{C}^{\sigma
}(S)}\sum_{J\;:\;(I,J)\in \mathcal{P}(S):\lvert J\rvert =2^{-s}\lvert
I\rvert }\left\vert \left\langle \phi ,h_{J}^{\omega }\right\rangle _{\omega
}\right\vert ^{2}\,.
\end{align}%
Indeed, apply Cauchy-Schwarz in the $I$ variable above to obtain, and appeal
to \eqref{e.Eh} to see that 
\begin{equation*}
A_{3}^{4}(S,s)\leq F(S)\left[ \sum_{I\in \mathcal{C}^{\sigma }(S)}\left(
\sum_{J\;:\;(I,J)\in \mathcal{P}(S):\lvert J\rvert =2^{-s}\lvert I\rvert }%
\frac{1}{\sigma (I_{J})^{1/2}}\left\vert \left\langle H(\mathbf{1}%
_{S\backslash I_{J}}\sigma ),\Delta _{J}^{\omega }\phi \right\rangle
_{\omega }\right\vert \right) ^{2}\right] ^{\frac{1}{2}},
\end{equation*}%
We can then estimate the sum inside the braces by%
\begin{align*}
& \sum_{I\in \mathcal{C}^{\sigma }(S)}\sum_{J\;:\;(I,J)\in \mathcal{P}%
(S):\lvert J\rvert =2^{-s}\lvert I\rvert }\lvert \left\langle \phi
,h_{J}^{\omega }\right\rangle _{\omega }\rvert ^{2} \\
& \times \sum_{J\;:\;(I,J)\in \mathcal{P}(S):\lvert J\rvert =2^{-s}\lvert
I\rvert }\frac{1}{\sigma (I_{J})}\cdot \left\vert \left\langle H(\mathbf{1}%
_{S\backslash I_{J}}\sigma ),h_{J}^{\omega }\phi \right\rangle _{\omega
}\right\vert ^{2} \\
& \lesssim \Lambda (S,s)^{2}\cdot A(S,s) \\
A(S,s)& \equiv \sup_{I\in \mathcal{C}^{\sigma }(S)}\sum_{J\;:\;(I,J)\in 
\mathcal{P}(S):\lvert J\rvert =2^{-s}\lvert I\rvert }\sigma
(I_{J})^{-1}\cdot \left\vert \left\langle H(\mathbf{1}_{S\backslash
I_{J}}\sigma ),h_{J}^{\omega }\phi \right\rangle _{\omega }\right\vert
^{2}\,.
\end{align*}

We turn to the analysis of the supremum in last display. We denote the two
chilren of $I$ by $I_{\theta }$ for $\theta \in \{-,+\}$. Using %
\eqref{e.Eimplies} and then the third inequality in \eqref{Psi properties},
we have 
\begin{eqnarray*}
A(S,s) &\lesssim &\sup_{I\in \mathcal{C}^{\sigma }(S)}\sup_{\theta \in
\{-,+\}}\sigma (I_{\theta })^{-1}\sum_{J\;:\;(I,J)\in \mathcal{P}%
(S):I_{J}=I_{\theta }\lvert J\rvert =2^{-s}\lvert I\rvert }\Phi \left(
J,S\setminus I_{\theta }\right) \\
&\lesssim &\sup_{I\in \mathcal{C}^{\sigma }(S)}\sup_{\theta \in
\{-,+\}}\sigma (I_{\theta })^{-1}2^{-\gamma s}\Psi _{\gamma ,\varepsilon
}\left( I_{\theta },S\right) \\
&\lesssim &2^{-\gamma s}\mathcal{F}_{\gamma ,\varepsilon }^{2}\,.
\end{eqnarray*}%
The third inequality is the one for which the definition of stopping
intervals was designed to deliver: From Definition~\ref{d.corona}, as $%
(I,J)\in \mathcal{P}(S)$, we have that $S$ is the $\mathcal{S}$-parent of $%
I_{J}$, hence $I_{J}$ was \emph{not} a stopping interval, that is %
\eqref{e.stoppingDef} does not hold, delivering the estimate above.

\medskip

We clearly have from \eqref{Plancherel} that 
\begin{equation*}
\sum_{S\in \mathcal{S}}F(S)^{2}\leq \sum_{I}\left\vert \left\langle
f,h_{I}^{\sigma }\right\rangle _{\sigma }\right\vert ^{2}=\left\Vert
f\right\Vert _{L^{2}(\sigma )}^{2}\,.
\end{equation*}%
And so we have from \eqref{e.stop<}, 
\begin{align*}
\sum_{S\in \mathcal{S}}\sum_{s=0}^{\infty }A_{3}^{4}(S,s)& \lesssim \mathcal{%
F}_{\gamma ,\varepsilon }\left\Vert f\right\Vert _{L^{2}(\sigma )}\left(
\sum_{S\in \mathcal{S}}\sum_{s=0}^{\infty }2^{-\gamma s}\Lambda
(S,s)^{2}\right) ^{1/2} \\
& \lesssim \mathcal{F}_{\gamma ,\varepsilon }\left\Vert f\right\Vert
_{L^{2}(\sigma )}\left\Vert \phi \right\Vert _{L^{2}(\omega )}\,.
\end{align*}

\subsection{$A ^{2} _{1}$: Diagonal Short Range Terms}

\label{s.21}

To prove \eqref{A21<}, let us recall the definition \eqref{A21}. The pairs
of intervals $I,J$ arise from the dyadic grids $\mathcal{D}^{\sigma }$ and $%
\mathcal{D}^{\omega }$ respectively. But these grids share a common set of
endpoints of the intervals. And the intervals $I,J$ have comparable lengths, 
$2^{-r}\lvert I\rvert \leq \lvert J\rvert \leq \lvert I\rvert $. Accordingly
, these pairs of intervals satisfy the conditions of the weak boundedness
condition \eqref{e.W}. A Haar function $h_{I}^{\sigma }$ is a linear
combination of its children, and the children of $I$ and $J$ also satisfy
the weak boundedness condition \eqref{e.W}. From this, we see that 
\begin{equation*}
\lvert \left\langle H(\sigma h_{I}^{\sigma }),h_{J}^{\omega }\right\rangle
_{\omega }\rvert \leq 4\mathcal{W}\,,\qquad (I,J)\in \mathcal{A}_{1}^{2}\,.
\end{equation*}%
The Schur test easily implies that 
\begin{equation*}
A_{1}^{2}\leq 4\mathcal{W}\sum_{I\in \mathcal{D}^{\sigma }}\lvert
\left\langle f,h_{I}^{\sigma }\right\rangle _{\sigma }\rvert
\sum_{J\;:\;(I,J)\in \mathcal{A}_{1}^{2}}\lvert \left\langle \phi
,h_{J}^{\omega }\right\rangle _{\omega }\rvert \lesssim \mathcal{W}%
\left\Vert f\right\Vert _{L^{2}(\sigma )}\left\Vert \phi \right\Vert
_{L^{2}(\omega )}\,.
\end{equation*}

\subsection{ $A_{2}^{2}$: The Long Range Term }
\label{s.22}

We prove the estimate \eqref{A22<}. Recall that the pairs of intervals $I,J$
in question satisfy $\lvert J\rvert \leq \lvert I\rvert $ and $\textup{dist}%
(I,J)\geq \lvert I\rvert $. The hypotheses of \eqref{e.Eimplies} are in
force, in particular \eqref{e.Jgood} holds.

We observe that the Energy Lemma can be applied to estimate the inner
product $\left\langle H(h_{I}^{\sigma }\sigma ),h_{J}^{\omega }\right\rangle
_{\omega }$. To see this, note that $h_{I}^{\sigma }$ is constant on each
child $I_{\pm }$. So, take a child $I_{\theta }$, and apply the Energy Lemma
with the largest interval $\widehat{I}$ taken to be 
\begin{equation*}
\widehat{I}=\textup{hull}\left[ I_{\theta },\left( \frac{\lvert I\rvert }{%
\lvert J\rvert }\right) ^{1-\varepsilon }J\right] \,.
\end{equation*}%
Here $\lambda J$ means the interval with the same center as $J$ and length
equal to $\lambda \lvert J\rvert $. The two intervals $I_{\theta }$ and $%
\left( \frac{\lvert I\rvert }{\lvert J\rvert }\right) ^{1-\varepsilon }J$
are disjoint. We take $I^{\prime }\subset \widehat{I}$ so that $\widehat{I}%
\backslash I^{\prime }=I_{\theta }$. Then, the Energy Lemma (\ref{e.Eimplies}%
) and \eqref{e.Eh} apply to give us the estimate below. 
\begin{align}
\beta (I,J)& \equiv \left\vert \sum_{\theta }\left\langle H(\mathbf{1}%
_{I_{\theta }}h_{I}^{\sigma }\sigma ),h_{J}^{\omega }\right\rangle _{\omega
}\right\vert \leq \left\vert \mathbb{E}_{I_{\theta }}^{\sigma }h_{I}^{\sigma
}\right\vert \sum_{\theta }\left\vert \left\langle H(\mathbf{1}_{I_{\theta
}}\sigma ),h_{J}^{\omega }\right\rangle _{\omega }\right\vert  \label{e.za<}
\\
& \lesssim \sum_{\theta }\left[ \frac{\omega (J)}{\sigma (I_{\theta })}%
\right] ^{1/2}\mathsf{E}(J,\omega )\mathsf{P}(J,\mathbf{1}_{\widehat{I}%
\backslash I^{\prime }}\sigma ) \\
& \lesssim \sum_{\theta }\omega (J)^{1/2}\sigma (I_{\theta })^{1/2}\cdot 
\frac{\lvert J\rvert }{\textup{dist}(I,J)^{2}}\,.
\end{align}%
We have used the trivial inequalities $\mathsf{E}(\omega ,J)\leq 1$ and $%
\mathsf{P}(J,\mathbf{1}_{I_{\theta }}\sigma )\leq \frac{\lvert J\rvert }{%
\textup{dist}(I,J)^{2}}\sigma (I_{\theta })$.

We may assume that $\left\Vert f\right\Vert _{L^{2}\left( \sigma \right)
}^{2}=\left\Vert \phi \right\Vert _{L^{2}\left( \omega \right) }^{2}=1$. We
then estimate%
\begin{eqnarray*}
\left\vert A_{2}^{2}\right\vert &\leq &\sum_{I}\sum_{J\;:\;\lvert J\rvert
\leq \lvert I\rvert :\textup{dist}(I,J)\geq \lvert I\rvert }\lvert
\left\langle f,h_{I}^{\sigma }\right\rangle _{\sigma }\rvert \beta
(I,J)\lvert \left\langle \phi ,h_{J}^{\omega }\right\rangle _{\omega }\rvert
\\
&\lesssim &\sum_{I}\sum_{J\;:\;\lvert J\rvert \leq \lvert I\rvert :\textup{%
dist}(I,J)\geq \lvert I\rvert }\lvert \left\langle f,h_{I}^{\sigma
}\right\rangle _{\sigma }\rvert \sigma (I)^{\frac{1}{2}}\frac{\lvert J\rvert 
}{\textup{dist}(I,J)^{2}}\omega (J)^{\frac{1}{2}}\lvert \left\langle \phi
,h_{J}^{\omega }\right\rangle _{\omega }\rvert \\
&\lesssim &\sum_{I}\lvert \left\langle f,h_{I}^{\sigma }\right\rangle
_{\sigma }\rvert ^{2}\sum_{J\;:\;\lvert J\rvert \leq \lvert I\rvert :%
\textup{dist}(I,J)\geq \lvert I\rvert }\left( \frac{\lvert J\rvert }{\lvert
I\rvert }\right) ^{-\delta }\sigma (I)^{\frac{1}{2}}\frac{\lvert J\rvert }{%
\textup{dist}(I,J)^{2}}\omega (J)^{\frac{1}{2}} \\
&&+\sum_{J}\lvert \left\langle \phi ,h_{J}^{\omega }\right\rangle _{\omega
}\rvert ^{2}\sum_{I\;:\;\lvert J\rvert \leq \lvert I\rvert :\textup{dist}%
(I,J)\geq \lvert I\rvert }\left( \frac{\lvert J\rvert }{\lvert I\rvert }%
\right) ^{\delta }\sigma (I)^{\frac{1}{2}}\frac{\lvert J\rvert }{\textup{%
dist}(I,J)^{2}}\omega (J)^{\frac{1}{2}},
\end{eqnarray*}%
where we have inserted the gain and loss factors $\left( \frac{\lvert
J\rvert }{\lvert I\rvert }\right) ^{\pm \delta }$ with $0<\delta <1$ to
facilitate application of Schur's test. For each fixed $I$ we have%
\begin{eqnarray*}
&&\sum_{J\;:\;\lvert J\rvert \leq \lvert I\rvert :\textup{dist}(I,J)\geq
\lvert I\rvert }\left( \frac{\lvert J\rvert }{\lvert I\rvert }\right)
^{\delta }\sigma (I)^{\frac{1}{2}}\frac{\lvert J\rvert }{\textup{dist}%
(I,J)^{2}}\omega (J)^{\frac{1}{2}} \\
&\lesssim &\sigma (I)^{\frac{1}{2}}\sum_{k=0}^{\infty }2^{-k\delta }\left(
\sum_{J\;:\;2^{k}\lvert J\rvert =\lvert I\rvert :\textup{dist}(I,J)\geq
\lvert I\rvert }\frac{\lvert J\rvert }{\textup{dist}(I,J)^{2}}\omega
(J)\right) ^{\frac{1}{2}} \\
&&\times \left( \sum_{J\;:\;2^{k}\lvert J\rvert =\lvert I\rvert :\textup{%
dist}(I,J)\geq \lvert I\rvert }\frac{\lvert J\rvert }{\textup{dist}(I,J)^{2}%
}\right) ^{\frac{1}{2}},
\end{eqnarray*}%
which is bounded by%
\begin{equation*}
\sum_{k=0}^{\infty }2^{-k\delta }\left( \frac{\sigma (I)}{\left\vert
I\right\vert }\mathsf{P}\left( I,\omega \right) \right) ^{\frac{1}{2}%
}\lesssim \mathcal{A}_{2},
\end{equation*}%
if $\delta >0$. For each fixed $J$ we have%
\begin{eqnarray*}
&&\sum_{I\;:\;\lvert J\rvert \leq \lvert I\rvert :\textup{dist}(I,J)\geq
\lvert I\rvert }\left( \frac{\lvert J\rvert }{\lvert I\rvert }\right)
^{-\delta }\sigma (I)^{\frac{1}{2}}\frac{\lvert J\rvert }{\textup{dist}%
(I,J)^{2}}\omega (J)^{\frac{1}{2}} \\
&\lesssim &\omega (J)^{\frac{1}{2}}\sum_{k=0}^{\infty }2^{-k\left( 1-\delta
\right) }\sum_{I\;:\;2^{k}\lvert J\rvert =\lvert I\rvert :\textup{dist}%
(I,J)\geq \lvert I\rvert }\frac{\lvert I\rvert }{\textup{dist}(I,J)^{2}}%
\sigma (I)^{\frac{1}{2}} \\
&\lesssim &\omega (J)^{\frac{1}{2}}\sum_{k=0}^{\infty }2^{-k\left( 1-\delta
\right) }\left( \sum_{I\;:\;2^{k}\lvert J\rvert =\lvert I\rvert :\textup{%
dist}(I,J)\geq \lvert I\rvert }\frac{\lvert I\rvert }{\textup{dist}(I,J)^{2}%
}\sigma (I)\right) ^{\frac{1}{2}} \\
&&\times \left( \sum_{I\;:\;2^{k}\lvert J\rvert =\lvert I\rvert :\textup{%
dist}(I,J)\geq \lvert I\rvert }\frac{\lvert I\rvert }{\textup{dist}(I,J)^{2}%
}\right) ^{\frac{1}{2}},
\end{eqnarray*}%
which is bounded by%
\begin{eqnarray*}
&&\omega (J)^{\frac{1}{2}}\sum_{k=0}^{\infty }2^{-k\left( 1-\delta \right) }%
\mathsf{P}\left( 2^{k}J,\sigma \right) ^{\frac{1}{2}}\left( \frac{1}{%
\left\vert 2^{k}J\right\vert }\right) ^{\frac{1}{2}} \\
&\lesssim &\sum_{k=0}^{\infty }2^{-k\left( 1-\delta \right) }\left( \frac{%
\omega (2^{k}J)}{\left\vert 2^{k}J\right\vert }\mathsf{P}\left(
2^{k}J,\sigma \right) \right) ^{\frac{1}{2}}\lesssim \mathcal{A}_{2},
\end{eqnarray*}%
if $\delta <1$. With any fixed $0<\delta <1$ we obtain from the inequalities
above that 
\begin{eqnarray*}
\left\vert A_{2}^{2}\right\vert &\lesssim &\sum_{I}\lvert \left\langle
f,h_{I}^{\sigma }\right\rangle _{\sigma }\rvert ^{2}\mathcal{A}%
_{2}+\sum_{J}\lvert \left\langle \phi ,h_{J}^{\omega }\right\rangle _{\omega
}\rvert ^{2}\mathcal{A}_{2} \\
&=&\left( \left\Vert f\right\Vert _{L^{2}\left( \sigma \right)
}^{2}+\left\Vert \phi \right\Vert _{L^{2}\left( \omega \right) }^{2}\right) 
\mathcal{A}_{2}=2\mathcal{A}_{2}\left\Vert f\right\Vert _{L^{2}\left( \sigma
\right) }^{2}\left\Vert \phi \right\Vert _{L^{2}\left( \omega \right) }^{2},
\end{eqnarray*}%
since we assumed $\left\Vert f\right\Vert _{L^{2}\left( \sigma \right)
}^{2}=\left\Vert \phi \right\Vert _{L^{2}\left( \omega \right) }^{2}=1$.

\subsection{$A ^{3}_{1}$\color{blue} The Mid-Range Term}

\label{s.31}

We control the term associated with \eqref{e.a31}, namely we prove (\ref%
{e.a31<}). For integers $s\geq r$, set 
\begin{align}
A_{1}^{3}(s)& \equiv \sum_{I}\sum_{J\;:\;2^{s}\lvert J\rvert =\lvert I\rvert
:\textup{dist}(I,J)\leq \lvert I\rvert \,,\ I\cap J=\emptyset }\lvert
\left\langle f,h_{I}^{\sigma }\right\rangle _{\sigma }\left\langle
H(h_{I}^{\sigma }\sigma ),h_{J}^{\omega }\right\rangle _{\omega
}\left\langle \phi ,h_{J}^{\omega }\right\rangle _{\omega }\rvert \\
& \lesssim \left\Vert f\right\Vert _{L^{2}(\sigma )}\left[ \sum_{I}\left(
\sum_{J\;:\;2^{s}\lvert J\rvert =\lvert I\rvert :\textup{dist}(I,J)\leq
\lvert I\rvert \,,\ I\cap J=\emptyset }\lvert \left\langle H(h_{I}^{\sigma
}\sigma ),h_{J}^{\omega }\right\rangle _{\omega }\left\langle \phi
,h_{J}^{\omega }\right\rangle _{\omega }\rvert \right) ^{2}\right] ^{1/2} \\
& \lesssim \Lambda (s)\left\Vert f\right\Vert _{L^{2}(\sigma )}\left\Vert
\phi \right\Vert _{L^{2}(\omega )}\,, \\
\Lambda (s)^{2}& \equiv 2^{s}\sup_{I}\sum_{J\;:\;2^{s}\lvert J\rvert =\lvert
I\rvert :\textup{dist}(I,J)\leq \lvert I\rvert \,,\ I\cap J=\emptyset
}\lvert \left\langle H(h_{I}^{\sigma }\sigma ),h_{J}^{\omega }\right\rangle
_{\omega }\rvert ^{2}
\end{align}%
since, by \eqref{Plancherel}, 
\begin{equation*}
\sum_{I}\sum_{J\;:\;2^{s}\lvert J\rvert =\lvert I\rvert :\textup{dist}%
(I,J)\leq \lvert I\rvert \,,\ I\cap J=\emptyset }\left\vert \left\langle
\phi ,h_{J}^{\omega }\right\rangle _{\omega }\right\vert
^{2}=2^{s}\sum_{I}\sum_{J}\left\vert \left\langle \phi ,h_{J}^{\omega
}\right\rangle _{\omega }\right\vert ^{2}=2^{s}\left\Vert \phi \right\Vert
_{L^{2}(\omega )}^{2}\,.
\end{equation*}%
Due to the `local' nature of the sum in $J$, we have thus gained a small
improvement in the Schur test to derive the last line.

But $J$ is good, so that \eqref{e.Eimplies} applies to each child $I_{\pm }$
of $I$ as in \eqref{e.za<} above. Hence, we have using \eqref{e.Jsimeq} that 
\begin{align*}
\Lambda (s)^{2}& \lesssim \sup_{I}2^{s}\sum_{\theta }\sum_{J\;:\;2^{s}\lvert
J\rvert =\lvert I\rvert :\textup{dist}(I,J)\leq \lvert I\rvert \,,\ I\cap
J=\emptyset }\frac{\omega (J)}{\sigma (I_{\theta })}\cdot \mathsf{E}%
(J,\omega )^{2}\cdot \mathsf{P}(J,\mathbf{1}_{I_{\theta }}\sigma )^{2} \\
& \lesssim \sup_{I}2^{s}\sum_{\theta }\sum_{J\;:\;2^{s}\lvert J\rvert
=\lvert I\rvert :\textup{dist}(I,J)\leq \lvert I\rvert \,,\ I\cap
J=\emptyset }\frac{\omega (J)}{\sigma (I_{\theta })}\left( \frac{\left\vert
J\right\vert }{\left\vert I\right\vert }\right) ^{2-2\varepsilon }\cdot 
\mathsf{P}(I_{\theta },\mathbf{1}_{I_{\theta }}\sigma )^{2} \\
& \lesssim \sup_{I}2^{s}2^{-s\left( 2-2\varepsilon \right) }\sum_{\theta }%
\frac{\sigma (I_{\theta })}{\lvert I\rvert ^{2}}\sum_{J\;:\;2^{s}\lvert
J\rvert =\lvert I\rvert :\textup{dist}(I,J)\leq \lvert I\rvert \,,\ I\cap
J=\emptyset }\omega (J) \\
& \lesssim 2^{-\left( 1-2\varepsilon \right) s}\mathcal{A}_{2}\,.
\end{align*}%
This is clearly a summable estimate in $s\geq r$, so the proof of %
\eqref{e.a31<} is complete.

\subsection{$A ^{4}_1$: The Neighbor Terms}

\label{s.41}

The neighbor terms are defined in \eqref{e.a32}, \eqref{e.neighbor}, %
\eqref{e.Neighbor}, and we are to prove \eqref{e.41<}. To recall, $I\in 
\mathcal{D}^{\sigma }$, $J\in \mathcal{D}^{\omega }$ is contained in $I$,
with $\lvert J\rvert <2^{-r}\lvert I\rvert $, and $I_{J}$ is the child of $I$
that contains $J$.

Fix $\theta\in \{-,+\}$, and an integer $s\geq r$. Below we will use the
convention that $I\backslash I_{\theta}=I_{-\theta }$. The inner product to
be estimated is that in \eqref{e.Neighbor}: 
\begin{eqnarray*}
\langle H\left( \mathbf{1}_{I_{-\theta }}\sigma \Delta _{I}^{\sigma
}f\right) ,\Delta _{J}^{\omega }\phi \rangle _{\omega } &=&\langle (\mathbf{1%
}_{I_{-\theta }}\sigma \Delta _{I}^{\sigma }f,H\left( \omega \Delta
_{J}^{\omega }\phi \right) \rangle _{\sigma } \\
&=&\mathbb{E}_{I_{-\theta }}^{\sigma }\Delta _{I}^{\sigma }f\cdot \langle (%
\mathbf{1}_{I_{-\theta }}\sigma ,H\left( \omega \Delta _{J}^{\omega }\phi
\right) \rangle _{\sigma } \\
&=&\mathbb{E}_{I_{-\theta }}^{\sigma }\Delta _{I}^{\sigma }f\cdot \langle
H\left( \mathbf{1}_{I_{-\theta }}\sigma \right) ,\Delta _{J}^{\omega }\phi
\rangle _{\sigma } .
\end{eqnarray*}

Use $\left\Vert \Delta _{J}^{\omega }\phi \right\Vert _{L^{2}\left( \omega
\right) }=\left\vert \left\langle \phi ,h_{J}^{\omega }\right\rangle _{\omega }
\right\vert $ and $\frac{\left\vert J\right\vert }{\left\vert I_{\theta
}\right\vert }=2^{-s}$ in the Energy Lemma with $J\subset I_{\theta }\subset
I$ to obtain 
\begin{align*}
\left\vert \langle H\left( \mathbf{1}_{I_{-\theta }}\sigma \right) ,\Delta
_{J}^{\omega }\phi \rangle _{\omega } \right\vert & \lesssim \left\vert \left\langle
\phi ,h_{J}^{\omega }\right\rangle _{\omega } \right\vert \omega \left( J\right) ^{%
\frac{1}{2}}\cdot E\left( J,\omega \right) \cdot \mathsf{P}\left( J,\mathbf{1%
}_{I_{-\theta }}\sigma \right)  \\
& \lesssim \left\vert \left\langle \phi ,h_{J}^{\omega }\right\rangle _{\omega }
\right\vert \omega \left( J\right) ^{\frac{1}{2}}\cdot 2^{-\left(
1-\varepsilon \right) s}\mathsf{P}\left( I_{-\theta },\mathbf{1}_{I_{-\theta
}}\sigma \right) 
\end{align*}%
Here, we are using $E\left( J,\omega \right) \leq 1$ and \eqref{e.Jsimeq},
which inequality applies since $J\subset I\setminus I_{-\theta }$.

In the sum below, we keep the length of the intervals $J$ fixed, and assume
that $J\subset I_{\theta }$. We estimate 
\begin{align*}
A_{1}^{4}(I,\theta ,s)& \equiv \sum_{J\;:\;2^{s}\lvert J\rvert =\lvert
I\rvert :J\subset I_{\theta }}\left\vert \langle H\left( \mathbf{1}%
_{I_{-\theta }}\sigma \Delta _{I}^{\sigma }f\right) ,\Delta _{J}^{\omega
}\phi \rangle _{\omega }\right\vert  \\
& \leq 2^{-\left( 1-\varepsilon \right) s}\vert \mathbb{E}_{I_{-\theta
}}^{\sigma }\Delta _{I}^{\sigma }f \vert \mathsf{P}(I_{\theta },\mathbf{%
1}_{I_{-\theta }}\sigma )\sum_{J\;:\;2^{s}\lvert J\rvert =\lvert I\rvert
:J\subset I_{\theta }}\lvert \langle \phi ,h_{J}^{\omega }\rangle _{\omega} \rvert
\omega (J)^{1/2} \\
& \leq 2^{-\left( 1-\varepsilon \right) s}\vert \mathbb{E}_{I_{-\theta
}}^{\sigma }\Delta _{I}^{\sigma }f \vert \mathsf{P}(I_{\theta },\mathbf{%
1}_{I_{-\theta }}\sigma )\omega (I_{\theta })^{1/2}\Lambda (I,\theta ,s), \\
\Lambda (I,\theta ,s) ^2 & \equiv \sum_{J\;:\;2^{s}\lvert J\rvert =\lvert
I\rvert :J\subset I_{\theta }}\left\vert \langle \phi ,h_{J}^{\omega
}\rangle  _{\omega }\right\vert ^{2}\,.
\end{align*}%
The last line follows upon using the Cauchy-Schwartz inequality.

Using \eqref{e.Eh}, we have 
\begin{equation}
\vert \mathbb{E}_{I_{-\theta }}^{\sigma }\Delta _{I}^{\sigma
}f\vert
\le \lvert \langle f,h_{I}^{\sigma }\rangle _{\sigma } \rvert \cdot
 \sigma (I_{-\theta })^{-1/2}
\label{e.haarAvg}
\end{equation}%
And so, we can estimate $A_{1}^{4}(I,\theta ,s)$ as follows, in which we use
the $A_{2}$ hypothesis \eqref{A2}: 
\begin{eqnarray*}
A_{1}^{4}(I,\theta ,s) &\lesssim &2^{-\left( 1-\varepsilon \right) s}\lvert
\langle f,h_{I}^{\sigma }\rangle _{\sigma} \rvert \Lambda (I,\theta ,s)\cdot 
 \sigma (I_{-\theta })^{-1/2}
\mathsf{P}(I_{\theta
},\mathbf{1}_{I_{-\theta }}\sigma )\omega (I_{\theta })^{1/2} \\
&\lesssim &\mathcal{A}_{2}2^{-\left( 1-\varepsilon \right) s}\lvert \langle
f,h_{I}^{\sigma}\rangle_{\sigma}\rvert \Lambda (I,\theta ,s)\,,
\end{eqnarray*}%
since $\mathsf{P}(I_{\theta },\mathbf{1}_{I_{-\theta }}\sigma )\lesssim 
\frac{\sigma (I_{-\theta })}{\left\vert I_{\theta }\right\vert }$ shows that 
\begin{equation*}
 \sigma (I_{-\theta })^{-1/2}\mathsf{P}%
(I_{\theta },\mathbf{1}_{I_{-\theta }}\sigma )\omega (I_{\theta
})^{1/2}\lesssim \frac{\sigma (I_{-\theta })^{1/2}\omega (I_{\theta })^{1/2}%
}{\left\vert I_{\theta }\right\vert }\lesssim \mathcal{A}_{2},
\end{equation*}%
A straight forward application of Cauchy-Schwartz then shows that 
\begin{equation*}
\sum_{I}A_{1}^{4}(I,\theta ,s)\lesssim \mathcal{A}_{2}2^{-\left(
1-\varepsilon \right) s}\lVert f\rVert _{L^{2}(\sigma )}\lVert \Lambda (I,\theta ,s) \rVert
_{L^{2}(\omega )}\,.
\end{equation*}%
This estimate is summable in $\theta \in \left\{ {-,+}\right\} $ and $s\geq r
$, so the proof of \eqref{e.41<} is complete.

\section{A Counterexample to the Pivotal Conditions}

\label{s.example}

We exhibit a weight pair $\left( \omega ,\sigma \right) $ that illustrates
the nature of the Energy Condition, and the subtlety of the two weight
problem in general. In particular it shows that the Pivotal Conditions are
not necessary for the two weight inequality \eqref{e.H<}.


\begin{theorem}
\label{t.example} There is a weight pair $(\omega ,\sigma )$ which satisfies
the two weight inequality \eqref{e.H<}, and fails the dual Pivotal Condition, 
namely \eqref{pivotalcondition} with the roles of $ \omega $ and $ \sigma $ reversed. 
\end{theorem}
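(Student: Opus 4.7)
\bigskip

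\textbf{Plan of proof.} The theorem is proved by an explicit construction, so my plan is to exhibit a concrete weight pair $(\omega,\sigma)$ and verify the four properties in turn: $A_2$, the two testing conditions \eqref{e.H1}--\eqref{e.H2}, the Energy Hypothesis \eqref{smallest condition} (and its dual), and the failure of the dual Pivotal Condition. Applying Theorem~\ref{main} then yields \eqref{e.H<}. The design principle is the observation highlighted right after Definition~\ref{d.energy}: if $\mu\,\mathbf{1}_{I}$ is a single point mass then $\mathsf{E}(I,\mu)=0$. Thus, by choosing one of the two weights---say $\sigma$---to be a sum of sufficiently separated point masses, the extra factor $\mathsf{E}(I_r,\sigma)^{2}$ inside the dual Energy Condition can be forced to be zero on exactly those intervals where the dual Pivotal sum $\sum_{r}\sigma(I_{r})\mathsf{P}(I_{r},\mathbf{1}_{I_{0}}\omega)^{2}$ is allowed to accumulate.

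First, I would take $\sigma=\sum_{k\ge 1}s_{k}\,\delta_{y_{k}}$ for a geometrically decreasing sequence of points $y_{k}\to 0$ (e.g.\ $y_{k}=2^{-k}$) with weights $s_{k}$ calibrated so that the $A_{2}$ condition \eqref{A2} saturates at each scale: roughly $s_{k}\asymp |I_{k}|/\mathsf{P}(I_{k},\omega)$, where $I_{k}$ is the natural dyadic interval around $y_{k}$. I would take $\omega$ to be a self-similar (doubling-like) measure whose Poisson integral $\mathsf{P}(I_{k},\omega)$ stays bounded away from zero as $k\to\infty$; an absolutely continuous measure with a controlled power-law weight on a disjoint set would work. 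The crucial design feature is that each $I_{k}$ contains only the single point mass $y_{k}$ of $\sigma$, so that $\mathsf{E}(I_{k},\sigma)=0$.

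Second, I would verify the hypotheses of Theorem~\ref{main}. The $A_{2}$ condition follows by the calibration of the $s_{k}$ and a routine check at intervals lying between the $y_{k}$. The testing condition \eqref{e.H1} reduces, thanks to the point-mass form of $\sigma$, to a discrete sum that can be bounded by a Schur-type argument using $A_{2}$. The dual testing \eqref{e.H2} is handled by writing $H(\mathbf{1}_{I}\omega)$ explicitly and evaluating the sum against $\sigma$ at the $y_{k}$. For the Energy Hypothesis, one exploits the monotonicity inequality of Lemma~\ref{l.monotone} together with the pointwise bound $\mathsf{E}(J,\sigma)^{\epsilon}\leq 1$ and the fact that $\mathsf{E}(J,\sigma)=0$ whenever $J$ isolates a single $y_{k}$, so that the surviving contributions are controlled by $A_{2}$ via the Hybrid Condition for any $\epsilon<2$; Lemma~\ref{hybrid implies hypothesis} then delivers the Energy Hypothesis with $\gamma=2-2\varepsilon-\epsilon>0$.

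Finally, I would exhibit the explicit partition that breaks the dual Pivotal Condition. Taking $I_{0}$ a large interval containing all $y_{k}$ and $\{I_{r}\}$ the disjoint intervals $\{I_{k}\}$ (together with a complementary ``filler'' interval), we obtain
\begin{equation*}
\sum_{k\ge 1}\sigma(I_{k})\,\mathsf{P}(I_{k},\mathbf{1}_{I_{0}}\omega)^{2}
\asymp \sum_{k\ge 1}s_{k}\,\mathsf{P}(I_{k},\omega)^{2}
\asymp \sum_{k\ge 1}\frac{|I_{k}|\,\mathsf{P}(I_{k},\omega)}{1}\cdot\mathcal A_{2},
\end{equation*}
which diverges provided the weights $s_{k}$ are tuned so the $A_{2}$ bound is saturated at every scale simultaneously, while $\omega(I_{0})$ stays finite. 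The main obstacle is precisely this tuning: the constraints are competing, since $A_{2}$ forces $s_{k}\mathsf{P}(I_{k},\omega)\lesssim |I_{k}|$, and one must engineer the geometry of $\omega$ (choosing its local concentration/thinness, probably by placing it on a Cantor-type set disjoint from $\{y_{k}\}$) so that $|I_{k}|\mathsf{P}(I_{k},\omega)$ is \emph{not} summable, yet the testing constants \eqref{e.H1}--\eqref{e.H2} remain finite. Verifying this summability gap---the precise reason Pivotal fails while Energy succeeds---is the heart of the argument and will occupy most of the section.
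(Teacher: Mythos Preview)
Your plan has the right high-level architecture---point masses for $\sigma$, self-similar $\omega$, then verify Theorem~\ref{main}---but it is missing the single most delicate ingredient, and one of your claimed steps would fail outright.

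\textbf{Placement of the point masses.} You propose to put the atoms of $\sigma$ at arbitrary geometric points $y_{k}=2^{-k}$ and then verify the dual testing condition \eqref{e.H2} by ``writing $H(\mathbf{1}_{I}\omega)$ explicitly and evaluating the sum against $\sigma$.'' This is exactly where the construction is fragile. In the paper, $\omega$ is Cantor measure and the atoms of $\sigma$ are placed at the \emph{zeros} $z_{j}^{k}$ of $H\omega$ inside each removed middle third; the identity $H\omega(z_{j}^{k})=0$ is what makes $\int_{I}|H(\mathbf{1}_{I}\omega)|^{2}\,d\sigma$ controllable (see the proof of \eqref{claim Poisson}). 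The paper even builds a companion measure $\ddot\sigma$ with atoms shifted to points where $H\omega=(3/2)^{k}$ and shows that this pair \emph{fails} the dual testing condition while still satisfying $A_{2}$. So the location of the atoms relative to $H\omega$ is not a detail you can defer: without the zero-placement idea (or an equivalent cancellation mechanism), the dual testing condition will not hold.

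\textbf{Forward testing does not follow from $A_{2}$.} You write that \eqref{e.H1} ``can be bounded by a Schur-type argument using $A_{2}$.'' If that worked, Nazarov's theorem that the strengthened $A_{2}$ condition is insufficient for \eqref{e.H<} would be contradicted. In the paper the forward testing condition is obtained by a genuinely different mechanism: the exact self-similarity of the Cantor pair gives a contraction identity (see \eqref{reproduce}) of the form $\int|H\dot\sigma|^{2}\,d\omega=\tfrac{4}{9}(1+\varepsilon)\int|H\dot\sigma|^{2}\,d\omega+\mathcal R_{\varepsilon}$, with $\mathcal R_{\varepsilon}$ bounded by $A_{2}$; the factor $4/9<1$ is what closes the estimate. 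Your single-accumulation-point geometry has no such recursion.

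\textbf{Dual Hybrid Condition.} Observing that $\mathsf E(I_{k},\sigma)=0$ when $I_{k}$ isolates one atom is not enough: the Hybrid Condition \eqref{Hy Con} must hold for \emph{every} subpartition $\{I_{r}\}$ of every $I_{0}$, including intervals containing many atoms. The paper handles this via a pointwise inequality (Proposition~\ref{p.gettingE}): $\sigma(I)\,\mathsf E(I,\sigma)^{\epsilon}\,\mathsf P(I,\omega)^{2}\lesssim \omega(I)$ for $\epsilon\ge\epsilon_{0}\approx 0.92$, which immediately sums over any partition. You would need an analogue of this, and it requires a quantitative estimate on $\mathsf E(I,\sigma)$ when $I$ contains several atoms, not just the vanishing in the single-atom case.
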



Thus, this pair of weights satisfy the two weight inequality, but would not
be included in the analysis of \cite{NTV3}. We prove this result by
appealing to our Theorem~\ref{main}. In the course of the construction, we
will see that one can make seemingly small modifications of the example
measure $\sigma $, and in so doing violate the $L ^2 $ inequality.

The plan of the proof of the Theorem is to (1) construct the pair of
weights, and then to verify (2) the assertions on the Hybrid Conditions,
(3) the $A_{2}$ condition and (4) the two testing conditions \eqref{e.H1}
and \eqref{e.H2}. We take up these steps in the subsections below.


\subsection{Construction of the Pair of Weights}

Recall the middle-third Cantor set $\mathsf{E}$ and Cantor measure $\omega $
on the closed unit interval $I_{1}^{0}=\left[ 0,1\right] $. At the $k^{th}$
generation in the construction, there is a collection $\left\{
I_{j}^{k}\right\} _{j=1}^{2^{k}}$ of $2^{k}$ pairwise disjoint closed
intervals of length $\left\vert I_{j}^{k}\right\vert =\frac{1}{3^{k}}$. With 
$K_{k}=\bigcup_{j=1}^{2^{k}}I_{j}^{k}$, the Cantor set is defined by $%
\mathsf{E}=\bigcap_{k=1}^{\infty }K_{k}=\bigcap_{k=1}^{\infty }\left(
\bigcup_{j=1}^{2^{k}}I_{j}^{k}\right) $. The Cantor measure $\omega $ is the
unique probability measure supported in $\mathsf{E}$ with the property that
it is equidistributed among the intervals $\left\{ I_{j}^{k}\right\}
_{j=1}^{2^{k}}$ at each scale $k$, i.e.%
\begin{equation}
\omega (I_{j}^{k})=2^{-k},\ \ \ \ \ k\geq 0,1\leq j\leq 2^{k}.
\label{omega measure}
\end{equation}

We will define three measures $\sigma ,\dot{\sigma},\ddot{\sigma}$. We
denote the removed open middle third of $I_{j}^{k}$ by $G_{j}^{k}$. The
three measures, restricted to an interval $G_{j}^{k}$ will be a point mass
with weight that is only a function of $k$. The only distinction will be the
location of the point mass.

Let $\dot z_{j}^{k}\in G_{j}^{k}$ be the center of the interval $G_{j}^{k}$,
which is also the center of the interval $I_{j}^{k}$. Now we define 
\begin{equation}  \label{e.sigmaD}
\dot\sigma =\sum_{k,j}s_{j}^{k}\delta _{\dot z_{j}^{k}},
\end{equation}%
where the sequence of positive numbers $s_{j}^{k}$ is chosen to satisfy the
following precursor of the $A_{2}$ condition:%
\begin{equation*}
\frac{s_{j}^{k}\omega(I_{j}^{k})}{\vert I_{j}^{k}\vert ^{2}}=1,\ \ \
s_{j}^{k}=\left( \frac{1}{3}\right) ^{k}\left( \frac{2}{3}\right) ^{k}
\qquad k\geq 0,1\leq j\leq 2^{k}.
\end{equation*}
The self-similarity of this measure makes it useful in verifying the
counterexample. But, it appears that the pair of weights $(\omega , \sigma )$
\emph{do not} satisfy the two weight inequality \eqref{e.H}.

The construction of the other two example measures is closely related to the
structure of the function $H \omega $. On each interval $G_{j}^{k}$, $H
\omega $ is monotonically decreasing, from $\infty $ at the left hand
endpoint of $G_{j}^{k}$, to $-\infty $ at the right hand endpoint. In
particular, $H \omega $ has a unique zero $z ^{j} _{k}$. And this selection
of points define $\sigma $ as in \eqref{e.sigmaD}, namely 
\begin{equation*}
{\sigma }=\sum_{k,j}s_{j}^{k}\delta _{{z}^j_k}.
\end{equation*}%
Of course, we gain a substantial cancellation in the testing condition %
\eqref{e.H2} by locating the point mass at the zero of $H \omega $.

We then define the third measure $\ddot{\sigma}$ by taking $\ddot{z}%
_{j}^{k}\in G_{k}^{j}$ to the unique point so that $H\omega (\ddot{z}%
_{j}^{k})=(3/2)^{k}$. We then can easily check that the $L^{2}$ inequality
for $(\omega ,\sigma )$ does not hold: 
\begin{equation*}
\int \lvert H\omega \rvert ^{2}\;d\ddot{\sigma}(x)=\sum_{k=1}^{\infty
}\sum_{j=1}^{2^{k}}\left( \frac{9}{4}\cdot \frac{2}{9}\right) ^{k}=\infty \,.
\end{equation*}%
The weight pair $(\omega ,\ddot{\sigma})$ can be seen to satisfy the $A_{2}$
condition, the forward testing condition \eqref{e.H1}, but fail the
backwards testing condition. Thus, this pair of weights provides an
alternate example to those provided in \cite{NaVo} and \cite{NiTr}. We will
not further discuss the measure $\ddot{\sigma}$.

We can calculate the rate at which $H \omega $ blows up at the endpoints of
the complementary intervals. The rate is a reflection of the fractal
dimension of the Cantor set.


\begin{lemma}
\label{l.gkj} Write $G_{j}^{k}=(a_{j}^{k},b_{j}^{k})$. We have 
\begin{equation}
H\omega (a_{j}^{k}-c3^{-k})\simeq (3/2)^{k}\,,\ \ \ \ \ k\geq 1\,,\ 1\leq
j\leq 2^{k}\,,  \label{e.rapid}
\end{equation}%
and a similar equality holds for $b_{j}^{k}$. (Implied constants can be
taken absolute; signs will be reversed for $b_{j}^{k}$.)
\end{lemma}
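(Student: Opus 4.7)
My plan is to exploit the exact self-similarity of the Cantor measure. Let $a$ denote the left endpoint of $I_j^k$, so that $a_j^k = a + 3^{-(k+1)}$, and let $\phi(t) = a + 3^{-k}t$ be the affine map sending $[0,1]$ onto $I_j^k$. The self-similarity of $\omega$ gives $\omega|_{I_j^k} = 2^{-k}\phi_{\ast}\omega$. Writing $x = a_j^k - c\cdot 3^{-k} = \phi(1/3 - c)$ and changing variables immediately yields
\begin{equation*}
\int_{I_j^k}\frac{d\omega(y)}{x-y} = \left(\tfrac{3}{2}\right)^{k} H\omega\left(\tfrac{1}{3}-c\right),
\end{equation*}
so the ``internal'' contribution is exactly $(3/2)^k$ times a fixed quantity depending only on $c$.

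For the ``external'' contribution from $[0,1]\setminus I_j^k$, I would decompose via the dyadic ancestor chain $I_j^k \subset I_{j_{k-1}}^{k-1} \subset \cdots \subset [0,1]$: the complement partitions into the siblings $S_m$ ($0 \leq m < k$) of the level-$(m+1)$ ancestor within the level-$m$ ancestor. Each $S_m$ carries mass $\omega(S_m) = 2^{-(m+1)}$ and lies at distance $\gtrsim 3^{-(m+1)}$ from $x$, so
\begin{equation*}
\left|\int_{[0,1]\setminus I_j^k}\frac{d\omega(y)}{x-y}\right| \lesssim \sum_{m=0}^{k-1}\left(\tfrac{3}{2}\right)^{m+1} \lesssim (3/2)^k,
\end{equation*}
with constant depending on $c$ through the distance from $x$ to $\partial I_j^k$. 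This already yields the upper bound $|H\omega(x)| \lesssim (3/2)^k$.

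The lower bound is the heart of the matter. I would isolate a dominant, sign-definite contribution of size $(3/2)^k$ by exploiting that $a_j^k$ is a one-sided accumulation point of the Cantor set from the left. Inside the left child of $I_j^k$, the rightmost Cantor subinterval at generation $k+1+n$ has right endpoint $a_j^k$ and carries $\omega$-mass exactly $2^{-(k+1+n)}$. For $n \gtrsim \log_3(1/c)$, this subinterval lies entirely to the right of $x$ at distance comparable to $c\cdot 3^{-k}$, giving a negative contribution to $H\omega(x)$ of order $-(3/2)^k/(c\cdot 2^{n+1})$. Summing over such $n$ produces a net negative contribution of order $-(3/2)^k/c$. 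Since the remaining terms (the positive-sign mass to the left of $x$ plus the external terms, both bounded by $O((3/2)^k)$) can be controlled with a strictly smaller constant by choosing $c$ in a suitable range, the dominant term survives, giving $|H\omega(x)| \gtrsim (3/2)^k$ and revealing that $H\omega(x) < 0$. The companion statement at $b_j^k + c\cdot 3^{-k}$ (with reversed sign) then follows from the reflection symmetry of $\omega$ across the midpoint of $I_j^k$.

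The main obstacle is making the lower bound precise: one must ensure that the positive and negative contributions do not accidentally cancel. The cleanest route is to fix $c$ in an explicit range where the dominant negative term is provably larger in magnitude than the sum of all competing positive contributions, so that the inequalities close with absolute implied constants.
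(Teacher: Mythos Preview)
Your approach is in the same spirit as the paper's: a scale-by-scale decomposition for the upper bound, and isolation of a dominant nearby contribution for the lower bound with $c$ chosen so that it beats the $O((3/2)^k)$ remainder. Your self-similarity identity $\int_{I_j^k}\frac{d\omega(y)}{x-y}=(3/2)^{k}\,H\omega(1/3-c)$ is a clean observation the paper does not make; the paper instead reduces to the extreme cases $j=1$ and $j=2^k$ by asserting that $H\omega(a_j^k+c\,3^{-k})$ is monotone in $j$, which lets it avoid controlling the external contribution uniformly over all $j$ as you must.

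Two points to tighten. First, in your lower bound the ``rightmost level-$(k+1+n)$ subintervals with right endpoint $a_j^k$'' are \emph{nested}, so summing their contributions over $n$ double-counts the same mass; you should either sum over the disjoint annuli $[a_j^k-3^{-(k+m)},\,a_j^k-3^{-(k+m+1)}]$ or simply take the single interval at the critical scale $n_0\approx\log_3(1/c)$. Either way the dominant term comes out with order $(3/2)^k\,c^{\log_3 2-1}$ rather than $(3/2)^k/c$; this still diverges as $c\to 0$, so the conclusion survives, but the arithmetic should be corrected. (Incidentally the paper's own proof states the analogous bound as $c^{-1}(3/2)^k$, with the same inaccuracy.) Second, note that the statement as printed evaluates at $a_j^k-c\,3^{-k}$, a point lying \emph{inside} the left Cantor child of $I_j^k$, whereas the paper's proof and the intended application to the location of the zero $z_j^k\in G_j^k$ work at $a_j^k+c\,3^{-k}$, a point inside the gap; this is almost certainly a sign typo in the statement. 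Your argument carries over verbatim to the $+c$ case (there $\phi^{-1}(x)=1/3+c$ lies in the gap $(1/3,2/3)$, avoiding any issue of $x$ landing in the support of $\omega$), with the dominant contribution now positive, and that is the version actually needed downstream.
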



This in particular shows that the zeros $z ^{k} _{j}$ cannot move too far
from the middle: 
\begin{equation}  \label{e.closeToMiddle}
\sup _{j,k} \frac {\lvert z ^{k} _{j} - \dot z ^{k} _{j}\rvert } {\lvert G
^{k} _{j}\rvert } < \zeta < 1 \,.
\end{equation}
The points $\ddot z ^{k} _{j}$ satisfy a similar inequality. This indicates
the sensitivity of the two weight inequality to the precise definition of
the measures involved.


\begin{proof}
Fix $k$, and consider the numbers $H \omega ( a ^{k} _{j} - c3 ^{-k})$ for $%
1\le j \le 2 ^{k}$. These are monotonically increasing as the point of
evaluation moves from left to right across the interval $[0,1]$. So we
should verify that 
\begin{equation}  \label{e..H<}
C_1 (3/2) ^{k} \le H \omega ( a ^{k} _{1} + c3 ^{-k}) \le H \omega ( a ^{k}
_{2 ^{k}} + c 3 ^{-k}) \le C_2 (3/2) ^{k}
\end{equation}

We consider the right hand inequality. Writing 
\begin{align*}
H\omega (a_{2^{k}}^{k}+c3^{-k})& =\int_{(G_{2^{k}}^{k})^{c}}\frac{\omega (dy)%
}{a_{2^{k}}^{k}+c3^{-k}-y} \\
& \leq \int_{0}^{a_{2^{k}}^{k}}\frac{\omega (dy)}{a_{2^{k}}^{k}+c3^{-k}-y}
\end{align*}%
Here, we have discarded that part of the domain of the integral where the
integrand would be negative. Now, on the interval $[0,a_{2^{k}}^{k}]$, the
support of $\omega $ is contained in the set $\bigcup_{\ell
=1}^{k}I_{2^{\ell }-1}^{\ell }$. Using this, we continue the estimate above
as 
\begin{align*}
H\omega (a_{2^{k}}^{k}+c3^{-k})& \leq \sum_{\ell =1}^{k}\omega (I_{2^{\ell
}-1}^{\ell })\sup_{y\in I_{2^{\ell }-1}^{\ell }}\frac{1}{a_{1}^{k}+c3^{-k}-y}
\\
& \lesssim c^{-1}\frac{2^{-k}}{3^{-k}}\sum_{\ell =1}^{k-1}\frac{2^{-\ell }}{%
3^{-\ell }}\lesssim c^{-1}(3/2)^{k}\,.
\end{align*}%
It is useful to note for use below, that in this sum, the summand associated
with $k=\ell $ is the dominant one.

\smallskip We consider the left hand inequality in \eqref{e..H<}. We split
the support of $\omega $ into the sets $I_{1}^{k}\,,\
I_{2}^{k}\,I_{2}^{k-1},\dotsc ,I_{2}^{1}$. By the argument above, we have 
\begin{equation*}
\left\vert \sum_{\ell =1}^{k-1}H(\omega \mathbf{1}_{I_{2}^{\ell
}})(a_{1}^{k}+c3^{-k})\right\vert \leq A(3/2)^{k}\,,
\end{equation*}%
where $A$ is absolute, and we have yet to select $c$. Then, we have 
\begin{align*}
H(\omega \mathbf{1}_{I_{1}^{k}\cup I_{2}^{k}})& =\int_{I_{1}^{k}}\frac{1}{%
a_{j}^{k}-c3^{-k}-y}-\frac{1}{a_{j}^{k}-(1+c)3^{-k}-y}\omega (dy) \\
& \gtrsim c^{-1}3^{k}{\omega (I_{1}^{k})}
\end{align*}%
For $0<c<(2A)^{-1}$, we conclude our Lemma.
\end{proof}



\subsection{The $A_2$ Condition}

We verify that the usual $A_{2}$ condition holds for the pair $\left( \omega
,\sigma \right) $. Due to the property \eqref{e.closeToMiddle}, this same
argument will apply to the measures $\dot{\sigma}$ and $\ddot{\sigma}$. The
starting point is the estimate 
\begin{equation}
\sigma (I_{r}^{\ell })=\sum_{\left( k,j\right) :z_{j}^{k}\in I_{r}^{\ell
}}s_{j}^{k}=\sum_{k=\ell }^{\infty }2^{k-\ell }\left( \frac{1}{3}\right)
^{k}\left( \frac{2}{3}\right) ^{k}=2^{-\ell }\sum_{k=\ell }^{\infty }\left( 
\frac{2}{3}\right) ^{2k}\approx 2^{-\ell }\left( \frac{2}{3}\right) ^{2\ell
}=s_{r}^{\ell }\,.  \label{sigma measure}
\end{equation}%
From this, it follows that we have 
\begin{equation}
\frac{\sigma (I_{j}^{k})\omega (I_{j}^{k})}{|I_{j}^{k}|^{2}}\approx \frac{%
s_{j}^{k}\omega (I_{j}^{k})}{|I_{j}^{k}|^{2}}=1  \label{e.sA2}
\end{equation}%
The analogous condition for the Poisson or strengthened $A_{2}$ condition
also holds. Indeed, using the uniformity of $\omega $, one can verify 
\begin{eqnarray*}
\mathsf{P}\left( I_{r}^{\ell },\omega \right) &\lesssim &\frac{\omega
(I_{r}^{\ell })}{\left\vert I_{r}^{\ell }\right\vert }, \\
\mathsf{P}\left( I_{r}^{\ell },\sigma \right) &\lesssim &\sum_{m=0}^{\infty }%
\frac{1}{2^{m}}\frac{\omega (I^{\ell +m})_{\sigma }}{\omega (I^{\ell +m})}%
\leq \sum_{m=0}^{\infty }\frac{1}{2^{m}}\frac{2^{-\left( \ell +m\right)
}\left( \frac{2}{3}\right) ^{2\left( \ell +m\right) }}{\left( \frac{1}{3}%
\right) ^{\ell +m}}\lesssim \left( \frac{2}{3}\right) ^{2\ell }\lesssim 
\frac{\sigma (I_{r}^{\ell })} {\left\vert I_{r}^{\ell }\right\vert }.
\end{eqnarray*}%
From this and \eqref{e.sA2}, we see that 
\begin{equation*}
\mathsf{P}\left( I_{r}^{\ell },\omega \right) \mathsf{P}\left( I_{r}^{\ell
},\sigma \right) \lesssim 1\,.
\end{equation*}

Let us consider an interval $I\subset \lbrack 0,1]$, and let $k$ be the
smallest integer such that $z_{j}^{k}\in AI$. Here $A>1$ is a large
constant, dependent upon the constant in \eqref{e.closeToMiddle}. We note
that $j$ is unique. For $j<j^{\prime }$, it follows that for some $j^{\prime
\prime }$ we have $z_{j^{\prime }}^{k}<z_{j^{\prime \prime
}}^{k-1}<z_{j^{\prime }}^{k}$. In particular, we will have $\sigma(
AI)\simeq \sigma( G_{j}^{k})$. Let us also assume that $G_{j}^{k}\subset AI$%
. Let $I_{r}^{k-1}\supset G_{j}^{k}$. It follows that we have%
\begin{equation}
\mathsf{P}\left( I,\omega \right) \mathsf{P}\left( I,\sigma \right) \lesssim 
\mathsf{P}\left( I_{r}^{k-1},\omega \right) \mathsf{P}\left(
I_{r}^{k-1},\sigma \right) \lesssim \frac{\sigma( I)}{\lvert I\rvert }\frac{%
\omega( I)}{\lvert I\rvert }\lesssim 1\,.  \label{e.toUse}
\end{equation}

The last case is $G_{j}^{k}\supsetneqq AI$. We then have 
\begin{equation*}
\mathsf{P}\left( I,\sigma \right) \simeq \frac{s_{j}^{k}\lvert I\rvert }{%
(\lvert I\rvert +\textup{dist}(z_{j}^{k},I))^{2}}\simeq \frac{s_{j}^{k}}{%
\lvert I\rvert }\,.
\end{equation*}%
The last inequality follows from the definition of $z_{j}^{k}$, and fact
that we must have $\textup{dist}(I,\partial G_{j}^{k})>\lvert I\rvert $,
provided $A$ is sufficiently large. We then have $\mathsf{P}\left( I,\omega
\right) \lesssim 2^{-k}\frac{\lvert I\rvert }{\lvert G_{j}^{k}\rvert }$. And
so we can estimate 
\begin{align*}
\mathsf{P}\left( I,\sigma \right) \mathsf{P}\left( I,\omega \right) &
\lesssim \frac{s_{j}^{k}}{\lvert I\rvert }2^{-k}\frac{\lvert I\rvert }{%
\lvert G_{j}^{k}\rvert } \\
& \lesssim \frac{2^{-k}s_{j}^{k}}{\lvert G_{j}^{k}\rvert }\lesssim 1\,.
\end{align*}


\subsection{The Pivotal and  Hybrid  Conditions}  

In this section, we show that the weight pair $(\omega ,\sigma )$ fails the
dual Pivotal Condition, namely the Hybrid Condition with $\epsilon =0$ and
the roles of $\omega $ and $\sigma $ reversed. But, they satisfy the Hybrid
Condition for all $0<\epsilon \leq 2$, and the dual Hybrid Condition for $%
\epsilon _{0}\leq \varepsilon \leq 2$ for some $\epsilon _{0}<2$.


\subsubsection{Failure of the Pivotal Condition for $\protect\epsilon =0$}

Failure of the Pivotal Condition is straight forward. Indeed, $I_{1}^{\ell
}\subset I_{1}^{\ell -1}\subset ...\subset I_{1}^{0}$ and so 
\begin{equation*}
\mathsf{P}\left( G_{1}^{\ell },\omega \right) \approx \mathsf{P}\left(
I_{1}^{\ell },\omega \right) \approx \sum_{k=0}^{\ell }\frac{\left\vert
I_{1}^{\ell }\right\vert }{\left\vert I_{1}^{k}\right\vert ^{2}}\omega(
I_{r}^{k})\approx \sum_{k=0}^{\ell }\frac{3^{-\ell }}{3^{-2k}}2^{-k}\approx
\left( \frac{3}{2}\right) ^{\ell },
\end{equation*}%
and similarly%
\begin{equation*}
\mathsf{P}\left( G_{r}^{\ell },\omega \right) \approx \mathsf{P}\left(
I_{r}^{\ell },\omega \right) \approx \left( \frac{3}{2}\right) ^{\ell },\ \
\ \ \ \text{all }r.
\end{equation*}%
Considering the decomposition $\overset{\cdot }{\dot\bigcup_{\ell ,r}}%
G_{r}^{\ell }\subset \left[ 0,1\right] $ we thus have%
\begin{equation*}
\sum_{\ell ,r}\left\vert G_{r}^{\ell }\right\vert _{\sigma }\mathsf{P}\left(
G_{r}^{\ell },\omega \right) ^{2}\approx \sum_{\ell =0}^{\infty }2^{\ell }
\left( \frac{1}{3}\right) ^{\ell }\left( \frac{2}{3}\right) ^{\ell } \left( 
\frac{3}{2}\right) ^{2\ell }\mathbb{\approx }\sum_{\ell =0}^{\infty
}1=\infty ,
\end{equation*}%
which shows that the dual Pivotal Condition, the one dual to  \eqref{pivotalcondition}, fails.


\subsubsection{The dual Hybrid Condition for large $\protect\epsilon $}

Next we show that the dual Hybrid Condition%
\begin{equation}
\sum_{r=1}^{\infty }\sigma (I_{r})\mathsf{E}(I_{r},\sigma )^{\epsilon }%
\mathsf{P}(I_{r},\mathbf{1} _{I_{0}}\omega ) ^2 \leq \left( \mathcal{E}%
_{\epsilon }^{\ast }\right) ^{2}\omega (I_{0}),  \label{dual energy}
\end{equation}%
holds for all $\epsilon _{0}\leq \epsilon \leq 2$ where

\begin{equation*}
\epsilon _{0}=\frac{1}{\frac{\ln 3}{\ln 2}-\frac{1}{2}}\approx 0.92<2.
\end{equation*}%
We need this estimate, which shows that with Energy, we can get an essential strengthening of the 
$ A_2$ condition. 


\begin{proposition}
\label{p.gettingE}For $\epsilon \geq \epsilon _{0}$ and any interval $%
I\subset \lbrack 0,1]$, we have the inequality 
\begin{equation}
\sigma( I)\mathsf{E}(I;\sigma )^{\epsilon }\mathsf{P}(I;\omega )^{2}\lesssim
\omega( I).  \label{e.gettingE}
\end{equation}
\end{proposition}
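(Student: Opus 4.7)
The plan is a case analysis on where $I$ sits relative to the Cantor structure, combined with the self-similarity of $\sigma$ and $\omega$. The key preparatory observation is that the affine map $T\colon I_{j}^{k}\to [0,1]$ satisfies $T_{\ast}(\sigma\mathbf{1}_{I_{j}^{k}})=(2/9)^{k}\sigma$ and $T_{\ast}(\omega\mathbf{1}_{I_{j}^{k}})=2^{-k}\omega$, so that $\mathsf{E}(I_{j}^{k},\sigma)=E_{0}$ is a fixed constant in $(0,1/2)$, independent of $k$ and $j$. Two families of intervals are disposed of immediately. First, if $I\subset G_{j}^{k}$ lies in some complementary gap then $\omega(I)=0$, and either $\sigma(I)=0$ (when $I$ misses $z_{j}^{k}$) or $\sigma\mathbf{1}_{I}$ is a single atom (when $z_{j}^{k}\in I$) so $\mathsf{E}(I,\sigma)=0$; either way the LHS of \eqref{e.gettingE} vanishes. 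Second, when $I=I_{j}^{k}$ is a Cantor interval the explicit estimates already recorded above in the verification of the $A_{2}$ condition give $\sigma(I)\mathsf{E}(I,\sigma)^{\epsilon}\mathsf{P}(I,\omega)^{2}\simeq E_{0}^{\epsilon}\cdot 2^{-k}=E_{0}^{\epsilon}\omega(I)$, valid for every $\epsilon\geq 0$.

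The critical case which pins down $\epsilon_{0}$ is the near-gap enlargement
\begin{equation*}
I=\bigl[a_{j}^{k}-\delta,\,b_{j}^{k}+\delta\bigr],\qquad \delta\simeq 3^{-k-m-1},\ \ m\geq 1,
\end{equation*}
so that $|I|\simeq 3^{-k-1}$ and, by essentially the computation already used for the interval $I_j^k$, $\mathsf{P}(I,\omega)\simeq (3/2)^{k}$. Each $\delta$-slice at the ends of $I$ is, by self-similarity, a copy of a Cantor interval at level $k+m+1$, hence carries $\omega$-mass $\simeq 2^{-k-m-1}$ and $\sigma$-mass $\simeq (2/9)^{k+m}$. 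Since $\sigma\mathbf{1}_{I}$ is dominated by its atom at $z_{j}^{k}$ while the slice masses sit at distance $\simeq 3^{-k-1}$ from that atom, one obtains
\begin{equation*}
\omega(I)\simeq 2^{-k-m},\qquad \sigma(I)\,|I|^{2}\,\mathsf{E}(I,\sigma)^{2}\;\simeq\;(2/9)^{k+m}(3^{-k-1})^{2}\;\Longrightarrow\;\mathsf{E}(I,\sigma)^{2}\simeq (2/9)^{m}.
\end{equation*}
Substituting, the LHS of \eqref{e.gettingE} equals $\simeq (2/9)^{m\epsilon/2}\cdot 2^{-k}$ while the RHS equals $\simeq 2^{-k-m}$, and the inequality reduces to $2^{m}(2/9)^{m\epsilon/2}\lesssim 1$ uniformly in $m$. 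This holds precisely when $\epsilon\geq 2\log 2/\log(9/2)=\epsilon_{0}$, which simultaneously verifies \eqref{e.gettingE} in the tight configuration and shows the threshold is sharp.

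For a general interval $I\subset [0,1]$, the plan is to induct on the scale $k^{\ast}(I)$ defined as the largest integer such that $I\subset I_{j^{\ast}}^{k^{\ast}}$ for some $j^{\ast}$. If $k^{\ast}\geq 1$, rescaling by $T\colon I_{j^{\ast}}^{k^{\ast}}\to [0,1]$ leaves $\mathsf{E}$ invariant and rescales $\sigma,\omega$ by $(2/9)^{k^{\ast}},2^{-k^{\ast}}$; a splitting of $\mathsf{P}(I,\omega)$ into interior and tail contributions of $\omega$ gives
\begin{equation*}
\mathsf{P}(I,\omega)\;=\;(3/2)^{k^{\ast}}\,\mathsf{P}(\tilde{I},\omega)\;+\;O\!\bigl((3/2)^{k^{\ast}}\bigr),
\end{equation*}
with the remainder bounded exactly as in the $A_{2}$ verification in the excerpt. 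Squaring and substituting reduces \eqref{e.gettingE} for $I$ to \eqref{e.gettingE} for $\tilde{I}\subset [0,1]$ together with the auxiliary bound $\sigma(\tilde{I})\mathsf{E}(\tilde{I},\sigma)^{\epsilon}\lesssim \omega(\tilde{I})$ needed to absorb the tail; the same near-gap computation forces this auxiliary bound under the identical threshold $\epsilon\geq \epsilon_{0}$, and both are proved simultaneously by induction. The base cases $k^{\ast}=0$ split into finitely many configurations (central gap, Cantor child, gap-spanning), each handled above. The main technical hurdle will be the variance estimate for \emph{asymmetric} enlargements $I=[a_{j}^{k}-\delta_{1},b_{j}^{k}+\delta_{2}]$ with $\delta_{1}\neq\delta_{2}$, where the mean of $\sigma\mathbf{1}_{I}$ drifts off $z_{j}^{k}$ and must be tracked carefully; nevertheless, a direct calculation shows $\mathsf{E}(I,\sigma)^{2}\lesssim (2/9)^{\min(m_{1},m_{2})}$ while $\omega(I)\gtrsim 2^{-k-\min(m_{1},m_{2})}$, so the same threshold $\epsilon_{0}$ suffices throughout.
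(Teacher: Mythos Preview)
Your argument is correct and isolates the same threshold computation as the paper, but the route is considerably more elaborate than necessary. The paper bypasses your rescaling/induction entirely by parametrizing every nontrivial interval with two integers: $k$ is the minimal level with some $z_{r}^{k}\in I$, and $n$ is the least positive integer for which a second point $z_{s}^{k+n}\in I$ appears. From this data one reads off directly
\[
\sigma(I)\simeq (2/9)^{k},\qquad \mathsf{E}(I,\sigma)^{2}\lesssim (2/9)^{n},\qquad \mathsf{P}(I,\omega)\simeq (3/2)^{k},\qquad \omega(I)\gtrsim 2^{-k-n},
\]
and the inequality $(2/9)^{k}(2/9)^{n\epsilon/2}(3/2)^{2k}\lesssim 2^{-k-n}$ reduces to $(2/9)^{\epsilon/2}\leq 1/2$, i.e.\ $\epsilon\geq\epsilon_{0}$. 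Your symmetric near-gap enlargement is precisely this computation in the special geometry where $n=m$, and your asymmetric case corresponds to $n=\min(m_{1},m_{2})$; but the $(k,n)$ parametrization covers \emph{all} intervals uniformly, including the awkward ones (one endpoint inside the gap, deep asymmetric penetration on one side) that your ``finitely many configurations'' clause gestures at without fully enumerating.

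What your approach buys is a clear separation between the local (rescaled) estimate and the Poisson tail, and an explicit demonstration that the same threshold governs the auxiliary inequality $\sigma(\tilde I)\mathsf{E}(\tilde I,\sigma)^{\epsilon}\lesssim\omega(\tilde I)$. What the paper's approach buys is economy: no induction, no auxiliary inequality, no case enumeration at the base. The variance bound $\mathsf{E}(I,\sigma)^{2}\lesssim(2/9)^{n}$ is the single nontrivial step in either argument, and both proofs handle it the same way---the dominant atom $z_{r}^{k}$ carries mass $(2/9)^{k}$ while the remaining $\sigma$-mass in $I$ is $\lesssim(2/9)^{k+n}$ at distance $\lesssim|I|$.
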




\begin{proof}
We can assume that $\mathsf{E}(I;\sigma )\neq 0$. Let $k$ be the smallest
integer for which there is a $r$ with $z_{r}^{k}\in I$. And let $n$ be the
smallest integer so that for some $s$ we have $z_{s}^{k+n}\in I$ and $%
z_{s}^{k+n}\neq z_{r}^{k}$. We can estimate $\mathsf{E}(I;\sigma )$ in terms
of $n$. Namely, we have 
\begin{equation}
\mathsf{E}(I;\sigma )^{2}\lesssim \left( \frac{2}{9}\right) ^{n}\,.
\label{e.En}
\end{equation}%
Indeed, the worst case is when $s$ is not unique. Then, there are two
choices of $s$ -- but not more. Let $z_{s^{\prime }}^{k+n}\in I$, where $%
s^{\prime }\neq s$. Then, note that we have 
\begin{equation*}
\frac{\lvert I-\{z_{r}^{k}\}\rvert _{\sigma }}{\sigma( I)}\simeq \left( 
\frac{2}{9}\right) ^{n}.
\end{equation*}%
and this leads to the estimate above, remembering the characterization of
Energy as a variance term.

Next we note that $\sigma( I)\approx \left( \frac{2}{9}\right) ^{k}$, $%
\omega( I)\geq 2^{-k-n}$, and $\mathsf{P}(I;\omega )\simeq \left( \frac{3}{2}%
\right) ^{k}$. This specifies everything in \eqref{e.gettingE}, so we choose 
$\epsilon $ so that 
\begin{equation*}
\left( \frac{2}{9}\right) ^{k}\left( \frac{2}{9}\right) ^{\frac{\epsilon }{2}%
n}\left( \frac{3}{2}\right) ^{2k}\lesssim 2^{-k-n}\,.
\end{equation*}%
This inequality will be true for all pairs of $n,k$ if $\epsilon _{0}\leq
\epsilon <2$ where%
\begin{equation*}
\left( \frac{2}{9}\right) ^{\frac{\epsilon _{0}}{2}}=\frac{1}{2}.
\end{equation*}
\end{proof}


It is now clear that the pair of weights $(\omega ,\sigma )$ satisfy the
dual Energy conditions $\mathcal{E}_{\epsilon }^{\ast }$ for $\epsilon
_{0}\leq \epsilon \leq 2$. Let $I_{0}\subset \lbrack 0,1]$ and let $%
\{I_{r}\;:\;r\geq 1\}$ be any partition of $I_{0}$. We appeal to %
\eqref{e.gettingE} to see that 
\begin{equation*}
\sum_{r\geq 1}\sigma( I_{r})\mathsf{E}(I;\sigma )^{\epsilon }\mathsf{P}%
(I;\omega )^{2}\lesssim \sum_{r\geq 1} \omega (I_{r}) =\omega( I_{0})\,.
\end{equation*}

\subsubsection{The Hybrid Condition for positive $\protect\epsilon $}

It remains to verify that the pair of measures $(\omega ,\sigma )$ satisfy
the Hybrid Conditions for all $0\leq \epsilon \leq 2$. We will establish the
pivotal condition \eqref{pivotalcondition}, i.e. $\mathcal{E}_{0}<\infty $,
which then implies that $\mathcal{E}_{\epsilon }<\infty $ for all $0\leq
\epsilon \leq 2$. For this it suffices to show that the forward maximal
inequality%
\begin{equation}
\int M\left( f\sigma \right) ^{2}d\omega \leq C\int \left\vert f\right\vert
^{2}d\sigma  \label{M2weight}
\end{equation}%
holds for the pair $\left( \omega ,\sigma \right) $, and \eqref{M2weight} in
turn follows from the testing condition%
\begin{equation}
\int M\left( \mathbf{1} _{Q}\sigma \right) ^{2}d\omega \leq C\int_{Q}d\sigma
,  \label{testmax}
\end{equation}%
for all intervals $Q$ (see \cite{MR676801}). We will show \eqref{testmax}
when $Q=I_{r}^{\ell }$, the remaining cases being an easy consequence of
this one. For this we use the fact that%
\begin{equation}
\mathcal{M}\left( \mathbf{1} _{I_{r}^{\ell }}\sigma \right) \left( x\right)
\leq C\left( \frac{2}{3}\right) ^{\ell },\ \ \ \ \ x\in E\cap I_{r}^{\ell }.
\label{Msigma bounded}
\end{equation}%
To see \eqref{Msigma bounded}, note that for each $x\in I_{r}^{\ell }$ that
also lies in the Cantor set $E$, we have%
\begin{equation*}
\mathcal{M}\left( \mathbf{1} _{I_{r}^{\ell }}\sigma \right) \left( x\right)
\leq \sup_{\left( k,j\right) :x\in I_{j}^{k}}\frac{1}{\left\vert
I_{j}^{k}\right\vert }\int_{I_{j}^{k}\cap I_{r}^{\ell }}d\sigma \approx
\sup_{\left( k,j\right) :x\in I_{j}^{k}}\frac{\left( \frac{1}{3}\right)
^{k\vee \ell }\left( \frac{2}{3}\right) ^{k\vee \ell }}{\left( \frac{1}{3}%
\right) ^{k}}\approx \left( \frac{2}{3}\right) ^{\ell }.
\end{equation*}%
Now we consider for each fixed $m$, the approximations $\omega ^{\left(
m\right) }$ and $\sigma ^{\left( m\right) }$ to the measures $\omega $ and $%
\sigma $ given by 
\begin{eqnarray}
d\omega ^{\left( m\right) }\left( x\right) &=&\sum_{i=1}^{2^{m}}2^{-m}\frac{1%
}{\left\vert I_{i}^{m}\right\vert }\mathbf{1} _{I_{i}^{m}}\left( x\right) dx,
\label{approximations} \\
\sigma ^{\left( m\right) } &=&\sum_{k<m}\sum_{j=1}^{2^{k}}s_{j}^{k}\delta
_{z_{j}^{k}}.  \notag
\end{eqnarray}%
For these approximations we have in the same way the estimate%
\begin{equation*}
\mathcal{M}\left( \mathbf{1} _{I_{r}^{\ell }}\sigma ^{\left( m\right)
}\right) \left( x\right) \leq C\left( \frac{2}{3}\right) ^{\ell },\ \ \ \ \
x\in \bigcup_{i=1}^{2^{m}}I_{i}^{m}.
\end{equation*}%
Thus for each $m\geq 1$ we have%
\begin{eqnarray*}
\int_{I_{r}^{\ell }}\mathcal{M}\left( \mathbf{1} _{I_{r}^{\ell }}\sigma
^{\left( m\right) }\right) ^{2}d\omega ^{\left( m\right) } &\leq
&C\sum_{i:I_{i}^{m}\subset I_{r}^{\ell }}\left( \frac{2}{3}\right) ^{2\ell
}2^{-m} \\
&=&C2^{m-\ell }\left( \frac{2}{3}\right) ^{2\ell }2^{-m}=Cs_{r}^{\ell
}\approx C\int_{I_{r}^{\ell }}d\sigma .
\end{eqnarray*}%
Taking the limit as $m\rightarrow \infty $ yields the case $Q=I_{r}^{\ell }$
of \eqref{testmax}. This completes our proof of the pivotal condition, and
hence also the Hybrid Conditions \eqref{Hy Con} for all $0\leq \epsilon \leq
2$.


\subsection{The Testing Conditions}

As an initial step in verifying the forward testing condition \eqref{e.H1}
for the pair $(\omega ,\sigma )$, we replace $\sigma $ by the self-similar
measure $\dot \sigma $, and exploit the self-similarity of both measures $%
\omega $ and $\dot \sigma $: 
\begin{eqnarray}
\omega &=&\frac{1}{2}\operatorname{Dil}_{\frac{1}{3}}\omega +\frac{1}{2}\operatorname{Trans}_{\frac{2}{3}%
}\operatorname{Dil}_{\frac{1}{3}}\omega \equiv \omega _{1}+\omega _{2},
\label{self-similar} \\
\dot\sigma &=&\frac{2}{9}\operatorname{Dil}_{\frac{1}{3}}\dot\sigma +\delta _{\frac{1}{2}}+%
\frac{2}{9}\operatorname{Trans}_{\frac{2}{3}}\operatorname{Dil}_{\frac{1}{3}}\dot\sigma \equiv \dot\sigma
_{1}+\delta _{\frac{1}{2}}+\dot\sigma _{2}.
\end{eqnarray}%
We compute%
\begin{eqnarray*}
\int \left\vert H\dot\sigma \right\vert ^{2}\omega &=&\int \left\vert
H\left( \dot\sigma _{1}+\delta _{\frac{1}{2}}+\dot\sigma _{2}\right)
\right\vert ^{2}\omega _{1}+\int \left\vert H\left( \dot\sigma _{1}+\delta _{%
\frac{1}{2}}+\dot\sigma _{2}\right) \right\vert ^{2}\omega _{2} \\
&=&\left( 1+\varepsilon \right) \left\{ \int \left\vert H\dot\sigma
_{1}\right\vert ^{2}\omega _{1}+\int \left\vert H\dot\sigma _{2}\right\vert
^{2}\omega _{2}\right\} +\mathcal{R}_{\varepsilon },
\end{eqnarray*}%
where the remainder term $\mathcal{R}_{\varepsilon }$ is easily seen to
satisfy 
\begin{equation*}
\mathcal{R}_{\varepsilon }\lesssim _{\varepsilon }\mathcal{A}_{2}^{2}\left(
\int \dot\sigma \right) ,
\end{equation*}%
since the supports of $\delta _{\frac{1}{2}}+\dot\sigma _{2}$ and $\omega
_{1}$ are well separated, as are those of $\delta _{\frac{1}{2}}+\dot\sigma
_{1}$ and $\omega _{2}$. For this we first use $\left( a+b\right) ^{2}\leq
\left( 1+\varepsilon \right) a^{2}+\left( 1+\frac{1}{\varepsilon }\right)
b^{2}$ to obtain%
\begin{eqnarray*}
&&\int \left\vert H\left( \dot\sigma _{1}+\delta _{\frac{1}{2}}+\dot\sigma
_{2}\right) \right\vert ^{2}\omega _{1} \\
&\lesssim &\int \left( \left\vert H\left( \dot\sigma _{1}\right) \right\vert
+\left\vert H\left( \delta _{\frac{1}{2}}+\dot\sigma _{2}\right) \right\vert
\right) ^{2}\omega _{1} \\
&\lesssim &\int \left\{ \left( 1+\varepsilon \right) \left\vert H\left(
\dot\sigma _{1}\right) \right\vert ^{2}+\left( 1+\frac{1}{\varepsilon }%
\right) \left\vert H\left( \delta _{\frac{1}{2}}+\dot\sigma _{2}\right)
\right\vert ^{2}\right\} \omega _{1},
\end{eqnarray*}%
and then for example,%
\begin{eqnarray*}
\int \left\vert H\left( \dot\sigma _{2}\right) \right\vert ^{2}\omega _{1}
&=&\int_{\left[ 0,\frac{1}{3}\right] }\left\vert \int_{\left[ \frac{2}{3},1%
\right] }\frac{1}{y-x}\dot\sigma \left( y\right) \right\vert ^{2}\omega
\left( x\right) \\
&\lesssim &\left[ \frac{1}{\frac{1}{3}\left\vert \left[ 0,1\right]
\right\vert }\right] ^{2} \dot \sigma ([ \tfrac{2}{3},1])^{2} \omega ([ 0,%
\tfrac{1}{3}]) \\
&\lesssim &9\frac{\dot \sigma ([ 0,1]) \omega ([0,1] }{\vert [ 0,1 ] \vert
^{2}} \int \dot\sigma _{2} \lesssim \mathcal{A}_{2}^{2} \int \dot\sigma .
\end{eqnarray*}

But now we note that%
\begin{eqnarray*}
\int \left\vert H\dot\sigma _{1}\right\vert ^{2}\omega _{1} &=&\frac{1}{2}%
\int \left\vert H\dot\sigma _{1}\left( x\right) \right\vert ^{2}\operatorname{Dil}_{\frac{1%
}{3}}\omega \left( x\right) =\frac{1}{2}\int \left\vert H\dot\sigma
_{1}\left( \frac{x}{3}\right) \right\vert ^{2}\omega \left( x\right) \\
&=&\frac{1}{2}\int \left\vert \int \frac{1}{z-\frac{x}{3}}\frac{2}{9}\operatorname{Dil}_{%
\frac{1}{3}}\dot\sigma \left( z\right) \right\vert ^{2}\omega \left(
x\right) =\frac{1}{2}\left( \frac{2}{9}\right) ^{2}\int \left\vert \int 
\frac{1}{\frac{z}{3}-\frac{x}{3}}\dot\sigma \left( z\right) \right\vert
^{2}\omega \left( x\right) \\
&=&\frac{1}{2}\left( \frac{2}{9}\right) ^{2}9\int \left\vert H\dot\sigma
\left( x\right) \right\vert ^{2}\omega \left( x\right) =\frac{2}{9}\int
\left\vert H\dot\sigma \right\vert ^{2}\omega ,
\end{eqnarray*}%
and similarly $\int \left\vert H\dot\sigma _{2}\right\vert ^{2}\omega _{2}=%
\frac{2}{9}\int \left\vert H\dot\sigma \right\vert ^{2}\omega $. Thus we have

\begin{equation}
\int \left\vert H\dot\sigma \right\vert ^{2}\omega =\frac{2}{9}\left(
1+\varepsilon \right) \int \left\vert H\dot\sigma \right\vert ^{2}\omega +%
\frac{2}{9}\left( 1+\varepsilon \right) \int \left\vert H\dot\sigma
\right\vert ^{2}\omega +\mathcal{R}_{\varepsilon },  \label{reproduce}
\end{equation}%
and provided $\int \left\vert H\dot\sigma \right\vert ^{2}\omega $ is finite
we conclude that%
\begin{equation*}
\int \left\vert H\dot\sigma \right\vert ^{2}\omega =\frac{1}{1-\frac{4}{9}%
\left( 1+\varepsilon \right) }\mathcal{R}_{\varepsilon }\lesssim
_{\varepsilon }\mathcal{A}_{2}^{2}\left( \int \dot\sigma \right) ,
\end{equation*}%
for $\varepsilon >0$ so small that $1-\frac{4}{9}\left( 1+\varepsilon
\right) >0$.

To avoid making the assumption that $\int \left\vert H\dot\sigma \right\vert
^{2}\omega $ is finite, we use approximations as follows. For each fixed $%
m\geq 1$, consider the approximations $\omega ^{\left( m\right) }$ and $%
\dot\sigma ^{\left( m\right) }$ to the measures $\omega $ and $\dot\sigma $
as in \eqref{approximations}. We have the following self-similarity
equations involving $\omega ^{\left( m\right) }$ and $\dot\sigma ^{\left(
m\right) }$ that substitute for \eqref{self-similar}: for $m\geq 2$,%
\begin{eqnarray*}
\omega ^{\left( m\right) } &=&\frac{1}{2}\operatorname{Dil}_{\frac{1}{3}}\omega ^{\left(
m-1\right) }+\frac{1}{2}\operatorname{Trans}_{\frac{2}{3}}\operatorname{Dil}_{\frac{1}{3}}\omega ^{\left(
m-1\right) }\equiv \omega _{1}^{\left( m\right) }+\omega _{2}^{\left(
m\right) }, \\
\dot\sigma ^{\left( m\right) } &=&\frac{2}{9}\operatorname{Dil}_{\frac{1}{3}}\dot\sigma
^{\left( m-1\right) }+\delta _{\frac{1}{2}}+\frac{2}{9}\operatorname{Trans}_{\frac{2}{3}%
}\operatorname{Dil}_{\frac{1}{3}}\dot\sigma ^{\left( m-1\right) }\equiv \dot\sigma
_{1}^{\left( m\right) }+\delta _{\frac{1}{2}}+\dot\sigma _{2}^{\left(
m\right) }.
\end{eqnarray*}%
As above we compute that%
\begin{eqnarray*}
\int \left\vert H\dot\sigma ^{\left( m\right) }\right\vert ^{2}\omega
^{\left( m\right) } &=&\int \left\vert H\left( \dot\sigma _{1}^{\left(
m\right) }+\delta _{\frac{1}{2}}+\dot\sigma _{2}^{\left( m\right) }\right)
\right\vert ^{2}\omega _{1}^{\left( m\right) }+\int \left\vert H\left(
\dot\sigma _{1}^{\left( m\right) }+\delta _{\frac{1}{2}}+\dot\sigma
_{2}^{\left( m\right) }\right) \right\vert ^{2}\omega _{2}^{\left( m\right) }
\\
&=&\left( 1+\varepsilon \right) \left\{ \int \left\vert H\dot\sigma
_{1}^{\left( m\right) }\right\vert ^{2}\omega _{1}^{\left( m\right) }+\int
\left\vert H\dot\sigma _{2}^{\left( m\right) }\right\vert ^{2}\omega
_{2}^{\left( m\right) }\right\} +\mathcal{R}_{\varepsilon }^{\left( m\right)
},
\end{eqnarray*}%
where the remainder term $\mathcal{R}_{\varepsilon }^{\left( m\right) }$
satisfies $\mathcal{R}_{\varepsilon }^{\left( m\right) }\lesssim
_{\varepsilon }\mathcal{A}_{2}^{2}\left( \int \dot\sigma \right) $, and also
that%
\begin{eqnarray*}
\int \left\vert H\dot\sigma _{1}^{\left( m\right) }\right\vert ^{2}\omega
_{1}^{\left( m\right) } &=&\frac{1}{2}\int \left\vert H\dot\sigma
_{1}^{\left( m\right) }\right\vert ^{2}\operatorname{Dil}_{\frac{1}{3}}\omega ^{\left(
m-1\right) }\left( x\right) =\frac{1}{2}\int \left\vert H\dot\sigma
_{1}^{\left( m\right) }\left( \frac{x}{3}\right) \right\vert ^{2}\omega
^{\left( m-1\right) }\left( x\right) \\
&=&\frac{1}{2}\int \left\vert \int \frac{1}{z-\frac{x}{3}}\frac{2}{9}\operatorname{Dil}_{%
\frac{1}{3}}\dot\sigma ^{\left( m-1\right) }\left( z\right) \right\vert
^{2}\omega ^{\left( m-1\right) }\left( x\right) \\
&=&\frac{1}{2}\left( \frac{2}{9}\right) ^{2}\int \left\vert \int \frac{1}{%
\frac{z}{3}-\frac{x}{3}}\dot\sigma ^{\left( m-1\right) }\left( z\right)
\right\vert ^{2}\omega ^{\left( m-1\right) }\left( x\right) \\
&=&\frac{1}{2}\left( \frac{2}{9}\right) ^{2}9\int \left\vert H\dot\sigma
^{\left( m-1\right) }\left( x\right) \right\vert ^{2}\omega ^{\left(
m-1\right) }\left( x\right) =\frac{2}{9}\int \left\vert H\dot\sigma ^{\left(
m-1\right) }\right\vert ^{2}\omega ^{\left( m-1\right) },
\end{eqnarray*}%
and $\int \left\vert H\dot\sigma _{2}^{\left( m\right) }\right\vert
^{2}\omega _{2}^{\left( m\right) }=\frac{2}{9}\int \left\vert H\dot\sigma
^{\left( m-1\right) }\right\vert ^{2}\omega ^{\left( m-1\right) }$. Thus we
have%
\begin{equation*}
\int \left\vert H\dot\sigma ^{\left( m\right) }\right\vert ^{2}\omega
^{\left( m\right) }=\frac{4}{9}\left( 1+\varepsilon \right) \int \left\vert
H\dot\sigma ^{\left( m-1\right) }\right\vert ^{2}\omega ^{\left( m-1\right)
}+\mathcal{R}_{\varepsilon }^{\left( m\right) },\qquad m\geq 2.
\end{equation*}%
Iterating this equality yields%
\begin{equation*}
\int \left\vert H\dot\sigma ^{\left( m\right) }\right\vert ^{2}\omega
^{\left( m\right) }=\left( \frac{4}{9}\left( 1+\varepsilon \right) \right)
^{m}\int \left\vert H\dot\sigma ^{\left( 0\right) }\right\vert ^{2}\omega
^{\left( 0\right) }+\sum_{j=0}^{m-1}\left( \frac{4}{9}\left( 1+\varepsilon
\right) \right) ^{j}\mathcal{R}_{\varepsilon }^{\left( m-j\right) },\qquad
m\geq 2,
\end{equation*}%
from which we obtain%
\begin{equation*}
\int \left\vert H\dot\sigma ^{\left( m\right) }\right\vert ^{2}\omega
^{\left( m\right) }\lesssim _{\varepsilon }\mathcal{A}_{2}^{2}\left( \int
\dot\sigma \right) ,\qquad m\geq 2,
\end{equation*}%
with a constant $C$ independent of $m$. Taking the limit as $m\rightarrow
\infty $ proves $\int \left\vert H\dot\sigma \right\vert ^{2}\omega \leq
C_{\varepsilon }\mathcal{A}_{2}^{2}\left( \int \dot\sigma \right) <\infty $.

This completes the proof of the forward testing condition \eqref{e.H1} for
the interval $I=\left[ 0,1\right] $. The proof for the case $I=I_{j}^{k}$ is
similar using $\mathcal{R}_{\varepsilon }\left( I_{j}^{k}\right) \leq
C_{\varepsilon }\mathcal{A}_{2}^{2}\left( \int_{I_{j}^{k}}\dot\sigma \right) 
$, and the general case now follows without much extra work.

\begin{remark}
The self-similarity argument above works on the forward testing condition
because the central point mass $\delta _{\frac{1}{2}}$ is a significant
fraction $\frac{5}{9}$ of the mass of $\dot\sigma $ and is well separated
from the measure $\omega $ at all scales. This accounts for the fact that a
mere fraction\ $\frac{4}{9}$ of the left side of \eqref{reproduce} appears
on the right side. This argument fails to apply to the two weight inequality
\eqref{e..H<} itself because the measure $f\dot\sigma $ need not have a
significant proportion of its mass concentrated at the point $\frac{1}{2}$.
\end{remark}

Having verified the forward testing condition for the weight pair $(\omega
,\dot\sigma )$, we show that the forward testing condition \eqref{e.H1}
holds for $\left( \omega ,{\sigma }\right) $. For this, we estimate the
difference%
\begin{eqnarray*}
\int_{I_{r}^{\ell }}\left\vert H\mathbf{1} _{I_{r}^{\ell }}\left( \sigma -%
\dot{\sigma }\right) \right\vert ^{2}\omega &=&\int_{I_{r}^{\ell
}}\left\vert \sum_{\left( k,j\right) :z_{j}^{k}\in I_{r}^{\ell
}}s_{j}^{k}\left( \frac{1}{x-z_{j}^{k}}-\frac{1}{x-\dot{z}_{k}^{j}}\right)
\right\vert ^{2}\omega \left( x\right) \\
&=&\int_{I_{r}^{\ell }}\left\vert \sum_{\left( k,j\right) :z_{j}^{k}\in
I_{r}^{\ell }}s_{j}^{k}\left( \frac{z_{j}^{k}-\dot{z}_{k}^{j}}{\left(
x-z_{j}^{k}\right) \left( x-\dot{z}_{k}^{j}\right) }\right) \right\vert
^{2}\omega \left( x\right) \\
&\lesssim &C\int_{I_{r}^{\ell }}\left\vert \sum_{\left( k,j\right)
:z_{j}^{k}\in I_{r}^{\ell }}s_{j}^{k}\left( \frac{\left\vert
I_{j}^{k}\right\vert }{\left\vert x-z_{j}^{k}\right\vert ^{2}}\right)
\right\vert ^{2}\omega \left( x\right) .
\end{eqnarray*}%
In the last line, we have used \eqref{e.closeToMiddle}. Now for any fixed $x$
in the support of $\omega $ inside $I_{r}^{\ell }$, we have%
\begin{eqnarray*}
\sum_{\left( k,j\right) :z_{j}^{k}\in I_{r}^{\ell }}s_{j}^{k}\left( \frac{%
\left\vert I_{j}^{k}\right\vert }{\left\vert x-z_{j}^{k}\right\vert ^{2}}%
\right) &=&\sum_{m=0}^{\infty }\sum_{\left( k,j\right) :z_{j}^{k}\in
I_{r}^{\ell }\text{ and }\left\vert x-z_{j}^{k}\right\vert \approx
3^{-m}\left\vert I_{r}^{\ell }\right\vert }s_{j}^{k}\left( \frac{\left\vert
I_{j}^{k}\right\vert }{\left\vert x-z_{j}^{k}\right\vert ^{2}}\right) \\
&\lesssim &\sum_{m=0}^{\infty }\sum_{\left( k,j\right) :z_{j}^{k}\in
I_{r}^{\ell }\text{ and }\left\vert x-z_{j}^{k}\right\vert \approx
3^{-m}\left\vert I_{r}^{\ell }\right\vert }\left( \frac{1}{3}\right)
^{k}\left( \frac{2}{3}\right) ^{k}\left( \frac{3^{-k}}{\left( 3^{-m-\ell
}\right) ^{2}}\right) \\
&\approx &\sum_{m=0}^{\infty }\sum_{k\geq \ell +m}2^{k-\ell -m}\left( \frac{1%
}{3}\right) ^{k}\left( \frac{2}{3}\right) ^{k}\left( \frac{3^{-k}}{\left(
3^{-m-\ell }\right) ^{2}}\right) \\
&=&\sum_{m=0}^{\infty }\sum_{k\geq \ell +m}\left( \frac{4}{27}\right)
^{k}\left( \frac{9}{2}\right) ^{\left( m+\ell \right) } \\
&=&\sum_{m=0}^{\infty }\left( \frac{2}{3}\right) ^{m+\ell }\approx \left( 
\frac{2}{3}\right) ^{\ell }.
\end{eqnarray*}%
Thus we get%
\begin{equation*}
\int_{I_{r}^{\ell }}\left\vert H\mathbf{1} _{I_{r}^{\ell }}\left( \sigma -%
\dot{\sigma }\right) \right\vert ^{2}\omega \lesssim \left( \frac{2}{3}%
\right) ^{2\ell } \omega ( I_{r}^{\ell }) =C^{2}\left( \frac{2}{3}\right)
^{2\ell }2^{-\ell }\approx \sigma ( I_{r}^{\ell }),
\end{equation*}%
which yields%
\begin{eqnarray*}
\left( \int_{I_{r}^{\ell }}\left\vert H\mathbf{1} _{I_{r}^{\ell }}{\sigma }%
\right\vert ^{2}\omega \right) ^{\frac{1}{2}} &\lesssim &\left(
\int_{I_{r}^{\ell }}\left\vert H\mathbf{1} _{I_{r}^{\ell }}\dot\sigma
\right\vert ^{2}\omega \right) ^{\frac{1}{2}}+\left( \int_{I_{r}^{\ell
}}\left\vert H\mathbf{1} _{I_{r}^{\ell }}\left( \sigma -\dot{\sigma }\right)
\right\vert ^{2}\omega \right) ^{\frac{1}{2}} \\
&\lesssim &C\sqrt{ \sigma ( I_{r}^{\ell })}
\end{eqnarray*}%
This is the case $I=I_{r}^{\ell }$ of the forward testing condition (\ref%
{e.H1}) for the weight pair $\left( \omega ,\dot{\sigma }\right) $, and the
general case follows easily from this.

\bigskip

Finally, we turn to the dual testing condition \eqref{e.H2} for the weight
pair $(\omega , \sigma )$. For interval $I ^{\ell } _{r}$ and $z ^{k} _{j}
\in I ^{\ell } _{r}$, we claim that 
\begin{equation}
\left\vert H\left( \mathbf{1} _{I_{r}^{\ell }}\omega \right) \left(
z_{j}^{k}\right) \right\vert \lesssim \mathsf{P}\left( I_{r}^{\ell },\omega
\right) .  \label{claim Poisson}
\end{equation}%
To see this let $I_{s}^{\ell -1}$ denote the parent of $I_{r}^{\ell }$ and $%
I_{r+1}^{\ell }$ denote the other child of $I_{s}^{\ell -1}$. Then we have
using $H\omega \left( {z}^j_k\right) =0$, 
\begin{eqnarray*}
H\left( \mathbf{1} _{I_{r}^{\ell }}\omega \right) \left( {z}^j_k\right)
&=&-H\left( \mathbf{1} _{\left( I_{r}^{\ell }\right) ^{c}}\omega \right)
\left( {z}^j_k\right) \\
&=&-H\left( \mathbf{1} _{\left( I_{s}^{\ell -1}\right) ^{c}}\omega \right)
\left( {z}^j_k\right) -H\left( \mathbf{1} _{I_{r+1}^{\ell }}\omega \right)
\left( {z}^j_k\right) .
\end{eqnarray*}%
Now we have using $H\left( \omega \right) \left( {z_{r}^{\ell }}\right) =0$
that%
\begin{eqnarray*}
H\left( \mathbf{1} _{\left( I_{s}^{\ell -1}\right) ^{c}}\omega \right)
\left( {z}^j_k\right) &=&H\left( \mathbf{1} _{\left( I_{s}^{\ell -1}\right)
^{c}}\omega \right) \left( {z_{r}^{\ell }}\right) -\left\{ H\left( \mathbf{1}
_{\left( I_{s}^{\ell -1}\right) ^{c}}\omega \right) \left( {z_{r}^{\ell }}%
\right) -H\left( \mathbf{1} _{\left( I_{s}^{\ell -1}\right) ^{c}}\omega
\right) \left( {z}^j_k\right) \right\} \\
&=&-H\left( \mathbf{1} _{I_{s}^{\ell -1}}\omega \right) \left( {z_{r}^{\ell }%
}\right) -A,
\end{eqnarray*}%
where 
\begin{equation*}
A\equiv H\left( \mathbf{1} _{\left( I_{s}^{\ell -1}\right) ^{c}}\omega
\right) \left( {z_{r}^{\ell }}\right) -H\left( \mathbf{1} _{\left(
I_{s}^{\ell -1}\right) ^{c}}\omega \right) \left( {z}^j_k\right) .
\end{equation*}%
Combining equalities yields 
\begin{equation*}
H\left( \mathbf{1} _{I_{r}^{\ell }}\omega \right) \left( {z}^j_k\right)
=H\left( \mathbf{1} _{I_{s}^{\ell -1}}\omega \right) \left( z_{r}^{\ell
}\right) +A-H\left( \mathbf{1} _{I_{r+1}^{\ell }}\omega \right) \left( {z}%
^j_k\right) .
\end{equation*}%
We then have for $\left( k,j\right) $ such that $z_{j}^{k}\in I_{r}^{\ell }$,%
\begin{eqnarray*}
&&\left\vert H\left( \mathbf{1} _{I_{s}^{\ell -1}}\omega \right) \left(
z_{r}^{\ell }\right) \right\vert \lesssim \frac{\omega ( I_{s}^{\ell -1} )}{%
\left\vert {I_{s}^{\ell -1}}\right\vert }, \\
&&\left\vert A\right\vert \lesssim \int_{\left( I_{s}^{\ell -1}\right)
^{c}}\left\vert \frac{1}{x- z_{r}^{\ell }}-\frac{1}{x- z_{j}^{k}}\right\vert
\omega \left( x\right) \lesssim \int_{\left( I_{s}^{\ell -1}\right) ^{c}}%
\frac{\left\vert I_{r}^{\ell }\right\vert }{\left\vert x- z_{s}^{\ell -1}
\right\vert ^{2}}\omega \left( x\right) , \\
&&\left\vert H\left( \mathbf{1} _{I_{r+1}^{\ell }}\omega \right) \left(
z_{j}^{k}\right) \right\vert \lesssim \frac{\omega ( I_{s}^{\ell -1} )} {%
\left\vert {I_{s}^{\ell -1}}\right\vert },
\end{eqnarray*}%
which proves \eqref{claim Poisson}.

Now we compute using \eqref{claim Poisson} and the estimate for $\mathsf{P}%
\left( I_{r}^{\ell },\omega \right) $ above that%
\begin{eqnarray*}
\int_{I_{r}^{\ell }}\left\vert H\left( \mathbf{1} _{I_{r}^{\ell }}\omega
\right) \right\vert ^{2}d{\sigma } &=&\sum_{\left( k,j\right) :z_{j}^{k}\in
I_{r}^{\ell }}\left\vert H\left( \mathbf{1} _{I_{r}^{\ell }}\omega \right)
\left( {z}_{k}^{j}\right) \right\vert ^{2}s_{j}^{k}\leq C\sum_{\left(
k,j\right) :z_{j}^{k}\in I_{r}^{\ell }}\left\vert \mathsf{P}\left(
I_{r}^{\ell },\omega \right) \right\vert ^{2}s_{j}^{k} \\
&\lesssim &\sigma ( I_{r}^{\ell})\left( \frac{\omega ( I_{r}^{\ell})} {%
\left\vert I_{r}^{\ell }\right\vert }\right) ^{2}\lesssim \mathcal{A}%
_{2}\omega ( I_{r}^{\ell}).
\end{eqnarray*}%
This is the case $I=I_{r}^{\ell }$ of the dual testing condition (\ref{e.H2}%
) for the weight pair $\left( \omega ,{\sigma }\right) $, and the general
case follows easily from this.

\def\baselinestretch{1}

\end{document}